\patchcmd{\numberline}{\hfil}{.\hfil}{}{}  
\definecolor{DGreen}{rgb}{0,0.55,0}
\def\E{\hskip.15ex\mathbb{E}\hskip.10ex}
\def\P{\mathbb{P}}
\def\Var{\mathop{\mbox{\rm Var}}}
\def\eps{\varepsilon}
\def\phi{\varphi}
\newtheorem{Theorem} {Theorem}[section]
\newtheorem{theorem}[Theorem]{Theorem}
\newtheorem{Lemma}[Theorem]{Lemma}
\newtheorem{lemma}[Theorem]{Lemma}
\newtheorem{Proposition}[Theorem]{Proposition}
\newtheorem{proposition}[Theorem]{Proposition}
\newtheorem{Corollary}[Theorem]{Corollary}
\theoremstyle{definition}
\theoremstyle{definition}\newtheorem{Remark}[Theorem]{Remark}
\theoremstyle{definition}
\theoremstyle{remark}
\newtheorem{remark}[Theorem]{Remark}
\def\db#1{[\hspace{-.8mm}] #1 [\hspace{-.8mm}]}
\numberwithin{equation}{section}
\renewcommand{\ge}{\geqslant}
\renewcommand{\le}{\leqslant}
\renewcommand{\geq}{\ge}
\renewcommand{\leq}{\le}
\newcommand{\nn}{\nonumber}
\newcommand{\wt}{\widetilde}
\renewcommand{\d}{\partial}
\newcommand{\id}{\mathrm{id}}
\newcommand{\A}{\mathcal{A}}
\newcommand{\C}{\mathcal{C}}
\newcommand{\F}{\mathcal{F}}
\newcommand{\bF}{\mathbb{F}}
\newcommand{\bP}{\mathbb{P}}
\newcommand{\cR}{\mathcal{R}}
\newcommand{\cQ}{\mathcal{Q}}
\newcommand{\N}{\mathbb{N}}
\newcommand{\R}{\mathbb{R}}
\def\cX{\mathcal{X}}
\def\scC{\mathscr{C}}
\newcommand{\D}{\partial}
\newcommand{\cP}{\mathcal{P}}
\newcommand{\cD}{\mathscr{D}}
\newcommand{\cM}{\mathscr{M}}
\newcommand{\ccD}{\mathcal{D}}
\newcommand{\cI}{\mathcal{I}}
\newcommand{\cW}{\mathcal{W}}
\newcommand{\<}{\langle}
\renewcommand{\>}{\rangle}
\newcommand{\bone}{\mathbf{1}}
\definecolor{Brown}{rgb}{.75,.5,.25}
\definecolor{DGreen}{rgb}{0,0.55,0}
\definecolor{Olive}{rgb}{0.41,0.55,0.13}
\title{Approximation of SDEs -- a stochastic sewing approach}
\author{Oleg Butkovsky\thanks{Weierstrass Institute, Mohrenstra\ss e 39, 10117 Berlin, Germany$\qquad$\url{oleg.butkovskiy@gmail.com}}\,,
Konstantinos Dareiotis\thanks{University of Leeds, Woodhouse, LS2 9JT Leeds, United Kingdom$\qquad$\url{k.dareiotis@leeds.ac.uk}}, and M\'at\'e Gerencs\'er\thanks{TU Vienna, Wiedner Hauptstrasse 8-10, 1040 Wien, Austria$\qquad$\url{mate.gerencser@tuwien.ac.at}}}
\begin{document}
\maketitle
\begin{abstract}
We give a new take on the error analysis of approximations of stochastic differential equations (SDEs), utilizing and developing the stochastic sewing lemma of L{\^e} (2020). This approach allows one to exploit regularization by noise effects in obtaining convergence rates.
In our first application we show convergence (to our knowledge for the first time)
of the Euler-Maruyama scheme for SDEs driven by fractional Brownian motions with non-regular drift.
When the Hurst parameter is $H\in(0,1)$ and the drift is $\C^\alpha$, $\alpha\in[0,1]$ and $\alpha>1-1/(2H)$, we show the strong $L_p$ and almost sure rates of convergence to be $((1/2+\alpha H)\wedge 1) -\eps$, for any $\eps>0$. Our conditions on the regularity of the drift are optimal in the sense that they coincide with the conditions needed for the strong uniqueness of solutions from Catellier, Gubinelli (2016).
In a second application we consider the approximation of SDEs driven by multiplicative standard Brownian noise where we derive the almost optimal rate of convergence $1/2-\eps$ of the Euler-Maruyama scheme for $\C^\alpha$ drift, for any $\eps,\alpha>0$.
\end{abstract}

\keywords{Stochastic differential equations, regularization by noise, irregular drift, strong rate of convergence, fractional Brownian motion}

\tableofcontents

\section{Introduction}

Since the 1970s, it has been observed that the addition of a random forcing into an ill-posed
deterministic system could make it well-posed. Such phenomenon is called \textit{regularization by noise}. One of the prime examples concerns differential equations of the form
\begin{equation}\label{ode}
dX_t = b(X_t)\, dt,
\end{equation}
where $b$ is a bounded vector field. While equation \eqref{ode} might have infinitely many solutions when $b$ fails to be Lipschitz continuous and might possess no solution when $b$ fails to be continuous, Zvonkin \cite{Zvonya} and Veretennikov \cite{Veret80} (see also the paper of Davie \cite{Davie}) showed that the stochastic differential equation (SDE)
\begin{equation}\label{sdebr}
dX_t = b(X_t)\,dt +  dB_t
\end{equation}
driven by a Brownian motion $B$, has a unique strong solution when $b$ is merely bounded measurable. This result was extended to the case of the fractional Brownian noise in \cite{Nualart, NualartUnbounded, Cat-Gub, BNP, Khoa}. These papers study the equation
\begin{equ}\label{eq:main frac}
dX_t=b(X_t)\,dt+\,dB^H_t,\qquad X_0=x_0
\end{equ}
where $B^H$ is a $d$-dimensional fractional Brownian motion with Hurst parameter $H\in(0,1)$.  It is known  \cite[Theorem~1.9]{Cat-Gub} that this equation has a unique strong solution if $b$ belongs to the H\"older--Besov space $\C^\alpha$ and $\alpha>1-1/(2H)$.
Thus, the presence of the noise not only produces solutions in situations where there was none but also singles out a unique physical solution in situations where there were multiple. However, to the best of our knowledge, no construction of this solution through discrete approximations has been known (unless $H=1/2$). In this article, we develop a new approach which allows to construct this solution and even obtain rate of convergence of the discrete approximations.
Before the formal setup of  Section~\ref{sec:formulation}, let us informally overview the results.

First, let us recall that in the standard Brownian case ($H=1/2$) the seminal work of Gy\"ongy and Krylov \cite{GyK} established the convergence in probability of the Euler-Maruyama scheme
\begin{equ}\label{eq:main approx frac}
dX^n_t=b(X^n_{\kappa_n(t)})\,dt+\,dB^H_t,\qquad X_0^n=x_0^n,\quad t\ge0
\end{equ}
to the solution of \eqref{eq:main frac}. Here $b$ is a bounded measurable function and
\begin{equation}\label{kappadef}
\kappa_n(t):=\lfloor nt\rfloor/n, \quad n\in\N.
\end{equation}

In the present paper, we significantly extend these results by a) establishing the convergence  of the Euler--Maruyama scheme for all $H\in(0,1)$; b) showing that the convergence takes place in a stronger ($L_p(\Omega)$ and almost sure) sense; c) obtaining the explicit rate of convergence. More precisely, in Theorem \ref{thm:main fractional} we show that if $b$ is bounded and H\"older-continuous with exponent $\alpha>1-1/(2H)$, then the Euler-Maruyama scheme converges with rate $((1/2+\alpha H)\wedge1)-\eps$ for any $\eps>0$.
Thus, the approximation results are obtained under the minimal assumption on the drift $b$ that is needed for strong uniqueness of solutions \cite{Nualart, Cat-Gub} and for the well-posedness of
scheme~\eqref{eq:main approx frac}. Let us also point out that in particular, for $H<1/2$, one does not need to require any continuity from $b$ to obtain a convergence rate $1/2-\eps$.
Concerning approximations of SDEs driven by fractional Brownian motions with regular coefficients, we refer the reader to the recent works
\cite{Friz-Riedel, Nualart-Approx}
and references therein.
Concerning the implementation of such schemes and in particular the simulation of increments of fractional Brownian motions we refer to \cite[Section~6]{Sheva} and its references.

Our second application is to study equations with multiplicative noise in the standard Brownian case:
\begin{equ}\label{eq:main mult}
dX_t=b(X_t)\,dt+\sigma(X_t)\,dB_t,\qquad X_0=x_0,\quad t\ge0
\end{equ}
and their discretisations
\begin{equ}\label{eq:approx EM}
dX^{n}_t=b(X^{n}_{\kappa_n(t)})\,dt+\sigma(X^{n}_{\kappa_n(t)})\,dB_t,\quad X_0^{n}=x_0^n,
\quad t\ge0.
\end{equ}
Here $b$, $\sigma$ are measurable functions, $B$ is a $d$-dimensional Brownian motion, and $\kappa_n$ is defined in \eqref{kappadef}. To ensure well-posedness, a nondegeneracy assumption on $\sigma$ has to be assumed. In the standard Brownian case the rate of convergence for irregular $b$ has been recently actively studied, see among many others \cite{MX, Szo3, Y2, PT, Bao2} and their references. However, the obtained rate deteriorates as $b$ becomes more irregular: in the setting of \eqref{eq:main mult}-\eqref{eq:approx EM}, the best known rate is only proven to be (at least) $\alpha/2$ for $b\in\C^\alpha$, $\alpha>0$ in \cite{Bao2}.

It was first shown in \cite{DK} that, at least for additive noise, the strong rate does \textit{not} vanish as the regularity $\alpha$ approaches $0$, and one in fact recovers the rate $1/2-\eps$ for arbitrary $\eps>0$, for all $\alpha>0$.
In the present paper we establish the same for multiplicative noise, in which
case the rate $1/2$ is well-known to be optimal.
Our proof offers several other improvements to earlier results:
all moments of the error can be treated in the same way, the scalar and multidimensional cases are also not distinguished, and the main error bound \eqref{eq:main bound mult EM} is uniform in time, showing that $X_\cdot$ and $X^{n}_\cdot$ are close as paths. The topology (in time) where the error is measured is in fact even stronger, see Remark \ref{remark:topology}.

To obtain these results we develop a new strategy which utilizes the stochastic sewing lemma
(SSL) of L\^e \cite{Khoa} as well as some other specially developed tools. We believe that these tools might be also of independent interest; let us briefly describe them here.

First,  we obtain a new stochastic sewing--type lemma, see Theorem~\ref{thm:Sewing-lemma-better-p}. It provides bounds on the
$L_p$-norm of the increments of a process, with the correct dependence on $p$. This improves the corresponding bounds from SSL of
L\^e (although, under more restrictive conditions). This improved bound is used for proving
stretched exponential moment bounds that play a key role 
in the convergence analysis of the Euler--Maruyama scheme for \eqref{eq:main frac}, see Section~\ref{S:proof21}. In particular, using this new sewing-type lemma, we are able to extend the key bound of Davie \cite[Proposition~2.1]{Davie} (this bound was pivotal in his paper for establishing uniqueness of solutions to \eqref{sdebr} when the driving noise is the standard Brownian motion) to the case of the fractional Brownian noise,
see Lemma~\ref{L:Daviebound}.

Second, in Section \ref{sec:Malliavin} we derive density estimates of (a drift-free version of) the solution of \eqref{eq:approx EM} via Malliavin calculus. Classical results in this direction include that of Gy\"ongy and Krylov \cite{GyK},
and of Bally and Talay \cite{Bally-Talay, Bally-Talay2}:
the former gives sharp short time asymptotics but no smoothness of the density, and the latter vice versa (see Remark \ref{rem:density} below). Since our approach requires both properties at the same time, we give a self-contained proof of such an estimate \eqref{eq:main Malliavin}.

Finally let us mention that, as in \cite{DK, Neu-Sz, Issoglo}, efficient quadrature bounds play a crucial role in the analysis.
These are interesting approximation problems in their own right, see, e.g., \cite{KHiga_quad} and the references therein.
Such questions in the non-Markovian setting of fractional Brownian motion have only been addressed recently in \cite{Altmeyer}.
However, there are a few key differences to our quadrature bounds from Lemma \ref{lem:(ii)}.
First, we derive bounds in $L_p(\Omega)$ for all $p$, which by Proposition \ref{prop:almost sure} also imply the corresponding almost sure rate (as opposed to $L_2(\Omega)$ rates only in \cite{Altmeyer}).
Second, unlike the standard fractional Brownian motions considered here, \cite{Altmeyer} requires starting them at time $0$ from a random variable with a density, which provides a strong smoothing effect.
Third, when approximating the functional of the form
\begin{equ}
\Gamma_t:=\int_0^tf(B^H_s)\,ds,
\end{equ}
also called `occupation time functional',
by the natural discretisation
\begin{equ}
\Gamma^n_t=\int_0^tf(B^H_{\kappa_n(s)})\,ds,
\end{equ}
our results not only imply pointwise error estimates on $|\Gamma_T-\Gamma^n_T|$, but also on the error of the whole path $\|\Gamma_{\cdot}-\Gamma^n_\cdot\|_{\C^\beta}$ measured in a H\"older norm $\C^\beta$ with some $\beta>1/2$.
This is an immediate consequence of the bounds \eqref{DKBound frac} in combination with Kolmogorov's continuity theorem.

The rest of the article is structured as follows. Our main results are presented in Section~\ref{sec:formulation}. 
In Section~\ref{sec:prelim-bigsec} we outline the main strategy and collect some necessary auxiliary results, including the new sewing lemma--type bound Theorem \ref{thm:Sewing-lemma-better-p}.
Section~\ref{sec:fractional} is devoted to the error analysis in the additive fractional noise case. In Section~\ref{sec:Malliavin} we prove an auxiliary bound on the probability distribution of the Euler-Maruyama approximation of certain sufficiently nice SDEs. The proofs of the convergence in the multiplicative standard Brownian noise case are given in Section~\ref{sec:mult}.

\medskip

\bigskip

\noindent\textbf{Acknowledgments.} OB has received funding from the European Research
Council (ERC) under the European Union’s Horizon 2020 research and innovation program
(grant agreement No. 683164)
and from the DFG Research Unit FOR 2402.
MG was supported by the Austrian Science Fund (FWF) Lise Meitner programme M2250-N32. Part of the work on the project has been done during
the visits of the authors to IST Austria, Technical University Berlin, and Hausdorff Research Institute for Mathematics (HIM).  We thank them all
for providing excellent working conditions, support and hospitality.
Finally, we thank the referee for the careful reading of our paper and several useful comments.

\section{Main results}\label{sec:formulation}

We begin by introducing the basic notation. Consider a probability space $(\Omega, \mathcal{F}, \mathbb{P})$ carrying a $d$-dimensional  two-sided
Brownian motion $(W_t)_{t \in \mathbb{R}}$. Let $\mathbb{F}=(\mathcal{F}_t)_{t \in \mathbb{R}}$ be the filtration generated by the increments of $W$.
The conditional expectation given $\mathcal{F}_s$ is denoted by $\E^s$.
For $H \in (0,1)$ we define the fractional Brownian motion with Hurst parameter $H$ by the Mandelbrot-van Ness representation \cite[Proposition 5.1.2]{Nualart-Malliavin}
\begin{equs}\label{eq:Mandelbrot}
B^H_t := \int_{-\infty}^0 \bigl(|t-s|^{H-1/2}- |s|^{H-1/2}\bigr) \, dW_s + \int_0^t |t-s|^{H-1/2} \, dW_s.
\end{equs}
Recall that the components of $B^H$ are independent and each component is a Gaussian process with zero mean and covariance
\begin{equation}\label{covfunct}
C(s,t):=\frac{c_H}2(s^{2H}+t^{2H}-|t-s|^{2H}),\quad s,t\ge0,
\end{equation}
where $c_H$ is a certain positive constant, see \cite[(5.1)]{Nualart-Malliavin}.

For $\alpha\in(0,1]$ and a function $f\colon Q\to V$, where $Q\subset \R^k$ and $(V,|\cdot|)$ is a  normed space, we set
\begin{equation*}
[f]_{\C^\alpha(Q,V)}:=\sup_{x\neq y\in Q}\frac{|f(x)-f(y)|}{|x-y|^\alpha}.
\end{equation*}
For $\alpha\in (0,\infty)$ we denote by $\C^\alpha (Q,V)$ the space of all functions $f\colon Q\to V$ having  derivatives $\d^\ell f$ for  all multi-indices $\ell \in ( \mathbb{Z}_+)^k$ with $|\ell|<\alpha$ such that 
\begin{equ}
\|f\|_{\C^\alpha(Q,V)}:=\sum_{|\ell|< \alpha} \sup_{x\in Q}|\d^\ell f(x)|+
\sum_{\alpha-1<  |\ell|< \alpha}[\d^\ell f]_{\C^{\alpha-|\ell|}(Q,V)}< \infty. 
\end{equ}
If $\ell=(0,\ldots,0)$, then as usual, we use the convention $\d^\ell f=f$. In particular, the $\C^\alpha$ norm always includes the supremum of the function.
We also  set $\C^0(Q,V)$ to be the space of bounded measurable functions with the supremum norm.
We emphasize that in our notation elements of $\C^0$ need \emph{not} be continuous!
If $\alpha<0$, then by $\C^\alpha(\R^d,\R)$ we  denote the space of all distributions $f \in \mathcal{D}'( \R^d)$, such that 
\begin{equs}
\|f \|_{\mathcal{C}^\alpha} := \sup_{\eps\in(0,1]}  \eps^{-\alpha/2} \|\cP_\eps  f\|_{\C^0(\R^d,\R)}< \infty,
\end{equs}
where $\cP_\eps f $ is the convolution of $f$ with the $d$-dimensional Gaussian heat kernel at time $\eps$.

In some cases  we use shorthands: if $Q=\R^d$, or $V=\R^d$ or $V=\R^{d\times d}$, they are omitted from the notation.
For instance, the reader understands that requiring the diffusion coefficient $\sigma$ of \eqref{eq:main mult} to be of class $\C^\alpha$ is to require it to have finite $\|\cdot\|_{\C^\alpha(\R^d,\R^{d\times d})}$ norm.
If $V=L_p(\Omega)$ for some $p\geq 2$, we write 
\begin{equation}\label{boxes}
\db{f}_{\scC^\alpha_p,Q}:=\|f\|_{\C^\alpha(Q,L_p(\Omega))}.
\end{equation}

\textbf{Convention on constants}. Throughout the paper $N$ denotes a positive constant
whose value may change from line to line; its dependence is always specified in the corresponding statement.

\subsection{Additive fractional noise}\label{S:AFN}

Our first main result establishes the convergence of the numerical scheme
\eqref{eq:main approx frac} to the solution of equation \eqref{eq:main frac}. Fix $H\in(0,1)$.
It is known (\cite[Theorem~1.9]{Cat-Gub}) that if the drift $b\in\C^\alpha$ with $\alpha\in [0,1]$ satisfying $\alpha>1-1/(2H)$, then for any fixed $x_0\in\R^d$, equation \eqref{eq:main frac} admits a unique strong solution, which we denote by $X$.
For any $n\in \N$ we take $x_0^n\in\R^d$ and denote the solution of \eqref{eq:main approx frac} by $X^n$.
For a given $\alpha\in [0,1]$ and $H\in(0,1)$, we set
\begin{equ}\label{eq:gamma}
\gamma=\gamma(\alpha,H):=(1/2+\alpha H)\wedge1.
\end{equ}
Now we are ready to present our first main result. Its proof is placed in Section~\ref{sec:fractional}, a brief outline of it is provided in Section~\ref{sec:outline}.
\begin{theorem}\label{thm:main fractional}
Let $\alpha\in[0,1]$ satisfy
\begin{equ}\label{eq:exponent main}
\alpha>1-1/(2H).
\end{equ}
Suppose $b\in\C^\alpha$, let $\eps,\delta>0$ and $p\geq 2$.
Then there exists a constant $\tau=\tau(\alpha,H,\eps)>1/2$ such that for all $n\in\N$ the following bound holds
\begin{equ}\label{eq:frac main bound}
\|X-X^n\|_{\C^\tau([0,1],L_p(\Omega))}\leq N n^{\delta}|x_0-x^n_0| + N n^{-\gamma+\eps+\delta}
\end{equ}
with some constant $N=N(p,d,\alpha,H,\eps,\delta,\|b\|_{\C^\alpha})$.
\end{theorem}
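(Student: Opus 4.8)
The plan is to control the error process $\varphi_t:=X_t-X^n_t$, which has $\varphi_0=x_0-x^n_0$ and
\begin{equ}
\varphi_t-\varphi_s=\int_s^t\bigl(b(X_r)-b(X^n_{\kappa_n(r)})\bigr)\,dr=\Psi^{(1)}_{s,t}+\Psi^{(2)}_{s,t},\qquad 0\le s\le t\le 1,
\end{equ}
where $\Psi^{(1)}_{s,t}:=\int_s^t(b(X_r)-b(X^n_r))\,dr$ is the \emph{error-propagation} term and $\Psi^{(2)}_{s,t}:=\int_s^t(b(X^n_r)-b(X^n_{\kappa_n(r)}))\,dr$ the \emph{one-step (quadrature) error}. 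We will bound $\|\varphi_t-\varphi_s\|_{L_p(\Omega)}$ by applying L\^e's stochastic sewing lemma (SSL) separately to $\Psi^{(1)}$ and $\Psi^{(2)}$, and then close the resulting inequality by a buckling argument on short subintervals of $[0,1]$; the arbitrarily small polynomial losses $n^{\delta}$ will absorb the $\eps$-type losses coming from interpolation/mollification and from iterating over the $O(1)$-many subintervals.

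The essential inputs are \emph{regularization-by-noise bounds} for the occupation functionals of $X$ and $X^n$, uniform in $n$. For $g\in\C^\beta$ with $1-1/(2H)<\beta\le1$ (with the convention that $\C^0$ is the space of bounded measurable functions) one needs
\begin{equ}
\Bigl\|\E^s\int_s^t g(X_r)\,dr\Bigr\|_{L_p(\Omega)}\le N\|g\|_{\C^\beta}(t-s)^{1+\beta H},\qquad\Bigl\|\int_s^t g(X_r)\,dr\Bigr\|_{L_p(\Omega)}\le N\|g\|_{\C^\beta}(t-s)^{1/2+\beta H}
\end{equ}
(for $\beta=0$ one uses instead the trivial bound $N\|g\|_{\C^0}(t-s)$ for the second estimate), together with their \emph{two-process} refinement: if $X,\tilde X$ solve equations of the form \eqref{eq:main frac} driven by the same $B^H$ and $X_r-\tilde X_r=\phi_r$, then
\begin{equ}
\Bigl\|\E^s\int_s^t\bigl(g(X_r)-g(\tilde X_r)\bigr)\,dr\Bigr\|_{L_p(\Omega)}\le N\|g\|_{\C^\beta}\Bigl(\sup_{r\in[s,t]}\|\phi_r\|_{L_p(\Omega)}\Bigr)(t-s)^{1+(\beta-1)H}.
\end{equ}
The last bound is the heart of the matter: conditionally on $\F_s$, both $X_r$ and $X^n_r$ are $\F_s$-measurable data plus the \emph{same} fresh centered Gaussian increment of scale $(r-s)^H$, so $\E^s(b(X_r)-b(X^n_r))$ is a difference of heat-kernel regularizations of $b$ evaluated at points $|\varphi_s|$ apart, hence of size $\le N\|b\|_{\C^\alpha}(r-s)^{(\alpha-1)H}|\varphi_s|$ up to lower-order corrections from the extra drifts (which the a priori moment bounds control); integrating in $r$ yields the germ bound with $\beta=\alpha$. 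Crucially, the exponent $1+(\alpha-1)H$ exceeds $1/2$ \emph{precisely} when $\alpha>1-1/(2H)$, which is both why the SSL applies to $\Psi^{(1)}$ and why the resulting H\"older exponent $\tau$ stays above $1/2$. These occupation estimates in turn rest on the stretched-exponential moment bounds of Section~\ref{S:proof21}, which rely on the fractional Davie bound Lemma~\ref{L:Daviebound}, itself a consequence of the new sewing lemma Theorem~\ref{thm:Sewing-lemma-better-p}.

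For the one-step term $\Psi^{(2)}$, over each grid cell the integrand has $L_p(\Omega)$-size at most $N\|b\|_{\C^\alpha}n^{-\alpha H}$ (since $\|X^n_r-X^n_{\kappa_n(r)}\|_{L_p(\Omega)}\le Nn^{-H}$, up to a negligible drift term), while its $\F_{\kappa_n(r)}$-conditional mean is far smaller; summing the $\sim n(t-s)$ cells as a conditional martingale gains the extra factor $n^{-1/2}$, and the cap $\gamma\le1$ comes from the trivial bound. This is the content of the quadrature estimate Lemma~\ref{lem:(ii)}, which after interpolation gives $\|\Psi^{(2)}_{s,t}\|_{L_p(\Omega)}\le Nn^{-\gamma+\eps}(t-s)^{1/2+\eps}$ and $\|\E^s\Psi^{(2)}_{s,t}\|_{L_p(\Omega)}\le Nn^{-\gamma+\eps}(t-s)^{1+\eps}$ with $\gamma$ as in \eqref{eq:gamma}. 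Feeding the two germs into the SSL then produces, for $0\le s\le t\le1$,
\begin{equ}
\|\varphi_t-\varphi_s\|_{L_p(\Omega)}\le\bone_{\{s=0\}}|x_0-x^n_0|+N\Bigl(\sup_{r\in[s,t]}\|\varphi_r\|_{L_p(\Omega)}\Bigr)(t-s)^{1+(\alpha-1)H-\eps}+Nn^{-\gamma+\eps}(t-s)^{1/2+\eps}.
\end{equ}
Subdividing $[0,1]$ into $O(1)$-many intervals short enough that the middle term is absorbed into the left-hand side, iterating, and then taking $\tau$ slightly above $1/2$ (the quadrature contribution, of time-regularity just above $1/2$, being the binding constraint) gives \eqref{eq:frac main bound}. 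The main obstacle is the second paragraph: obtaining the two-process occupation bound with a \emph{linear} dependence on $\sup_{r}\|\varphi_r\|$ — a H\"older, i.e.\ $\|\varphi_r\|^{\alpha}$, dependence would be easier to prove but would not close the buckling — which demands sharp control of the conditional laws of $X$ and $X^n$ (exactly what the stretched-exponential moment bounds and Lemma~\ref{L:Daviebound} supply), together with the exponent bookkeeping ensuring the scheme closes with some $\tau>1/2$ uniformly in $n$.
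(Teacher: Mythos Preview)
Your decomposition into quadrature and error-propagation terms and the overall buckling strategy are correct, and your treatment of $\Psi^{(2)}$ matches the paper (Lemmas~\ref{lem:(ii)} and~\ref{lem:we need better labels}). The gap is in your handling of $\Psi^{(1)}$. You assert a clean two-process bound
\[
\Bigl\|\int_s^t\bigl(b(X_r)-b(X^n_r)\bigr)\,dr\Bigr\|_{L_p(\Omega)}\le N\,\db{\varphi}_{\scC^0_p,[s,t]}\,(t-s)^{1+(\alpha-1)H}
\]
with a constant $N$ independent of $n$, and then buckle on $O(1)$-many subintervals. The paper does \emph{not} establish such a bound, and in fact remarks (end of Section~\ref{sec:outline}) that applying the SSL directly to $\Psi^{(1)}$ fails to give an optimal result for $H>1/2$. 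Your heuristic for the germ bound also glosses over a real difficulty: the drift parts $\psi_r,\psi^n_r$ of $X_r,X^n_r$ are not $\F_s$-measurable for $r>s$, so $\E^s b(X_r)$ is not a heat-kernel evaluation at an $\F_s$-measurable point, and the ``lower-order corrections'' you dismiss are exactly where the hard work lies.

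What the paper actually proves (Lemma~\ref{L:finfin}) is a bound of the shape
\[
\Bigl\|\int_s^t\bigl(b(B^H_r+\phi_r)-b(B^H_r+\psi_r)\bigr)\,dr\Bigr\|_{L_p(\Omega)}\le NL\,(t-s)^{1+(\alpha-1)H-\eps}\db{\phi-\psi}_{\scC^\tau_p,[s,t]}+N\,|t-s|\,e^{-L^{2-\eps_1}},
\]
with a free parameter $L$. This is obtained not via the SSL but by first turning the Davie-type moment bound (Lemma~\ref{L:Daviebound}, with its crucial $\sqrt{p}$ dependence) into an \emph{almost sure} pathwise estimate with a random prefactor $\xi$ having stretched-exponential moments (Lemma~\ref{L:asbound}), then running the \emph{deterministic} sewing lemma pathwise, and finally truncating $\{\xi\le L\}$. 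In the buckling, the coefficient in front of $\db{\varphi}$ is $NL$, so the number of subintervals $m\sim (NL)^{2/(1+\eps)}$ grows with $L$, and iterating produces a factor $(NL)^{m}$. One then chooses $L$ of order $(\log n)^{1/(2-\eps)}$ so that $e^{-L^{2-\eps_1}}\lesssim n^{-1}$ while $(NL)^{m}\le n^{\delta}$; this is precisely the origin of the $n^{\delta}$ loss in \eqref{eq:frac main bound}. Your ``$O(1)$-many intervals'' and fixed-$N$ buckling would, if it worked, eliminate the $n^{\delta}$ altogether --- which is a hint that the clean bound you are assuming is not available with the paper's tools.
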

\begin{remark}
An interesting question left open is whether one can reach $\alpha=0$ in the $H=1/2$ case. In dimension $1$, this is positively answered \cite{DK} using PDE methods, but the sewing approach at the moment does not seem to handle such endpoint situations. For $H\neq 1/2$ even weak existence or uniqueness is not known for the endpoint $\alpha=1-1/(2H)$.
\end{remark}
\begin{remark}\label{remark:topology}
From \eqref{eq:frac main bound}, Kolmogorov's continuity theorem, and Jensen's inequality, one gets the bound
\begin{equ}\label{eq:frac main bound var}
\big\| \|X-X^n\|_{\C^{\tau-\eps'}([0,1],\R^d)}\big\|_{L_p(\Omega)}\leq N n^\delta|x_0-x^n_0| + N n^{-\gamma+\eps+\delta}.
\end{equ}
for any $\eps'>0$ (with $N$ also depending on $\eps'$).
In the literature it is more common to derive error estimates in supremum norm, which of course follows:
\begin{equ}
\big\|\sup_{t\in[0,1]}|X_t-X^n_t|\big\|_{L_p(\Omega)}\leq N n^\delta|x_0-x^n_0| + N n^{-\gamma+\eps+\delta},
\end{equ}
but \eqref{eq:frac main bound var} is quite a bit stronger.
\end{remark}
\begin{remark}
A trivial lower bound on the rate of convergence of the solutions is the rate of convergence of the initial conditions. In \eqref{eq:main approx frac} we lose $\delta$ compared to this rate, but $\delta>0$ can be chosen arbitrarily small. This becomes even less of an issue if one simply chooses $x_0^n=x_0$.
\end{remark}
\begin{remark}\label{remark:topology2}
The fact that the error is well-controlled even between the gridpoints is related to the choice of how we extend $X^n$ to continuous time from the points $X_0^n,X_{1/n}^n,\ldots$. For other type of extensions and their limitations we refer the reader to \cite{NEU06}.
\end{remark}
\begin{Corollary}\label{cor:as fractional}
Assume $\alpha\in[0,1]$ satisfies \eqref{eq:exponent main} and suppose $b\in\C^\alpha$. Take
$x_0=x_0^n$ for all $n\in\N$. Then for a sufficiently small $\theta>0$ and any $\eps>0$ there exists an almost surely finite random variable $\eta$ such that 
for all $n\in\N$, $\omega\in\Omega$ the following bound holds
\begin{equ}
\sup_{t\in[0,1]}|X_t-X^n_t|\leq
\|X-X^n\|_{\C^{1/2+\theta}([0,1],\R^d)}
\leq
 \eta n^{-\gamma+\eps},
\end{equ}
where $\gamma$ was defined in \eqref{eq:gamma}.
\end{Corollary}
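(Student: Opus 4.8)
The plan is to upgrade the $L_p(\Omega)$-bound of Theorem \ref{thm:main fractional}, in the pathwise form already recorded in Remark \ref{remark:topology}, to an almost sure bound by a standard summation argument, being careful about the order in which the small parameters are fixed. Fix $\eps>0$ and apply Theorem \ref{thm:main fractional} with $\eps$ replaced by $\eps/3$; this yields an exponent $\tau=\tau(\alpha,H,\eps)>1/2$ for which \eqref{eq:frac main bound} (hence \eqref{eq:frac main bound var}) holds with $\eps/3$ in place of $\eps$. Since $\tau>1/2$ \emph{strictly}, we may pick $\theta>0$ and $\eps'>0$ so small that $1/2+\theta\le\tau-\eps'$, and we take $\delta=\eps/3$. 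With $x_0=x_0^n$ the initial-condition term in \eqref{eq:frac main bound var} vanishes, and since on the bounded interval $[0,1]$ one has $\|f\|_{\C^{1/2+\theta}([0,1],\R^d)}\le N\|f\|_{\C^{\tau-\eps'}([0,1],\R^d)}$, Remark \ref{remark:topology} gives, for every $p\ge2$,
\begin{equ}\label{eq:as-pathwise-Lp}
\big\|\,\|X-X^n\|_{\C^{1/2+\theta}([0,1],\R^d)}\,\big\|_{L_p(\Omega)}\le N\,n^{-\gamma+2\eps/3},\qquad n\in\N,
\end{equ}
with $N=N(p,d,\alpha,H,\eps,\theta,\|b\|_{\C^\alpha})$.

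Now set $Z_n:=\|X-X^n\|_{\C^{1/2+\theta}([0,1],\R^d)}$, fix $p\ge2$ with $\eps p>3$, and define
\begin{equ}
\eta:=\Big(\sum_{n\in\N} n^{(\gamma-\eps)p}\,Z_n^p\Big)^{1/p}.
\end{equ}
Taking expectations and using \eqref{eq:as-pathwise-Lp},
\begin{equ}
\E\,\eta^p=\sum_{n\in\N}n^{(\gamma-\eps)p}\,\E Z_n^p\le N^p\sum_{n\in\N}n^{(\gamma-\eps)p}n^{(-\gamma+2\eps/3)p}=N^p\sum_{n\in\N}n^{-\eps p/3}<\infty,
\end{equ}
so $\eta$ is almost surely finite. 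By construction $n^{\gamma-\eps}Z_n\le\eta$ for all $n\in\N$ and all $\omega\in\Omega$, i.e.
\begin{equ}
\|X-X^n\|_{\C^{1/2+\theta}([0,1],\R^d)}\le\eta\,n^{-\gamma+\eps},
\end{equ}
which is the second asserted inequality; the first follows because, for $\beta=1/2+\theta\in(0,1)$, the norm $\|\cdot\|_{\C^\beta([0,1],\R^d)}$ of Section \ref{sec:formulation} includes the term $\sup_{t\in[0,1]}|\cdot|$. (One could equally run a Borel--Cantelli argument; the $\ell^p$-summation above is just the cleanest way to get a single $\eta$ valid for all $n$.)

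There is no substantial obstacle here, since all the analytic content sits in Theorem \ref{thm:main fractional}. The only points requiring attention are (i) choosing the parameters in the correct order — first $\eps$, then the exponent $\tau$ produced by the theorem, then $\theta,\eps'$, then $p$ large — so that a genuinely positive Hölder exponent $1/2+\theta$ survives, which is possible precisely because the theorem asserts $\tau>1/2$; and (ii) the passage from the $L_p(\Omega)$-valued Hölder estimate to a pathwise Hölder estimate, which is exactly the Kolmogorov continuity argument underlying Remark \ref{remark:topology} and whose cost is absorbed into the freely chosen $\delta$.
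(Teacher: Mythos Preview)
Your proof is correct and follows essentially the same route as the paper: reduce to the pathwise $L_p$ bound \eqref{eq:frac main bound var} via Theorem~\ref{thm:main fractional} and the fact that $\tau>1/2$, then upgrade to an almost sure bound. The only cosmetic difference is that the paper invokes Proposition~\ref{prop:almost sure} (a Borel--Cantelli argument) for the last step, whereas you use an equivalent $\ell^p$-summation to construct $\eta$ explicitly---as you yourself note.
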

\begin{proof}
An immediate consequence of \eqref{eq:frac main bound var}, Proposition \ref{prop:almost sure} below, and the fact that $\tau>1/2$.
\end{proof}

\subsection{Multiplicative Brownian noise}
In the multiplicative case we work under the ellipticity and regularity conditions
\begin{equ}\label{eq:elliptic}
\sigma\in\C^2,\qquad\qquad\sigma\sigma^T\succeq\lambda I,
\end{equ}
in the sense of positive definite matrices, with some $\lambda>0$.
This, together with $b\in\C^0$, guarantees the strong well-posedness of equations \eqref{eq:main mult} and \eqref{eq:approx EM} \cite[Theorem~1]{Veret80}, whose solutions we denote by
$X$ and $X^{n}$, respectively.
The second main result then reads as follows, its proof is the content of Section \ref{sec:mult}.
\begin{theorem}\label{thm:main multiplicative}
Let $\alpha\in(0,1]$. Suppose $b\in\C^\alpha$, let $\eps>0$, $\tau\in[0,1/2)$, and $p\geq 2$. Suppose $\sigma$ satisfies \eqref{eq:elliptic}.
Then for all $n\in\N$ the following bound holds
\begin{equ}\label{eq:main bound mult EM}
\|X-X^n\|_{\C^\tau([0,1],L_p(\Omega))}\leq N|x_0-x_0^n| + N n^{-1/2+\eps}
\end{equ}
with some $N=N(p,d,\alpha,\eps,\tau,\lambda,\|b\|_{\C^\alpha}, \|\sigma\|_{\C^2})$.
\end{theorem}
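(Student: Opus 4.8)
The plan is a stability-plus-consistency argument for the error process $Z_t:=X_t-X^n_t$, in which the stochastic sewing lemma of L\^e is used to extract the regularization-by-noise gain in \emph{both} parts. Subtracting \eqref{eq:approx EM} from \eqref{eq:main mult} gives, for $0\le s\le t\le1$, $Z_t-Z_s=K_{s,t}+L_{s,t}+M_{s,t}$, where $K_{s,t}=\int_s^t(b(X_r)-b(X^n_r))\,dr$ is the \emph{stability} term, $L_{s,t}=\int_s^t(b(X^n_r)-b(X^n_{\kappa_n(r)}))\,dr$ is the \emph{consistency} (quadrature) term, and $M_{s,t}=\int_s^t(\sigma(X_r)-\sigma(X^n_{\kappa_n(r)}))\,dB_r$. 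I would first record the a priori bound $\Psi:=\sup_n\sup_{t\in[0,1]}\|Z_t\|_{L_p(\Omega)}<\infty$, which follows from the boundedness of $b$ and $\sigma$ ($\|\sigma\|_{\C^2}$ controls $\sup|\sigma|$) and Burkholder--Davis--Gundy (BDG); it is needed to license the absorptions below. The martingale term is the easiest: by BDG and the Lipschitz bound $|\sigma(x)-\sigma(y)|\le\|\sigma\|_{\C^2}|x-y|$, together with $\|X^n_r-X^n_{\kappa_n(r)}\|_{L_p}\le Nn^{-1/2}$ (again BDG plus boundedness), one gets $\|M_{s,t}\|_{L_p}\le N(t-s)^{1/2}\sup_{r\in[s,t]}\|Z_r\|_{L_p}+N(t-s)^{1/2}n^{-1/2}$.

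For the consistency term I would apply stochastic sewing to the germ $A_{s,t}:=\int_s^t\E^s[b(X^n_r)-b(X^n_{\kappa_n(r)})]\,dr$; its defect is $\int_u^t(\E^s-\E^u)[b(X^n_r)-b(X^n_{\kappa_n(r)})]\,dr$, which has vanishing $\E^s$-conditional mean by the tower property, so only an unconditional bound is required. The crucial input is the density estimate \eqref{eq:main Malliavin} of Section~\ref{sec:Malliavin}: after a Girsanov reduction to the drift-free scheme (legitimate since $b$ is bounded), conditionally on $\F_s$ the law of $X^n_r$ has a density obeying a sub-Gaussian bound of variance of order $\lambda(r-s)$ together with a matching gradient bound, \emph{uniformly in $n$}. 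Comparing the conditional laws of $X^n_r$ and $X^n_{\kappa_n(r)}$ through this, and using $r-\kappa_n(r)\le1/n$, bounds $\|\E^s[b(X^n_r)-b(X^n_{\kappa_n(r)})]\|_{L_p}$ by a quantity of order $\|b\|_{\C^0}\,n^{-1/2}(r-s)^{-1/2}$ (away from the cell of $s$); integrating and invoking the sewing lemma gives the quadrature bound $\|L_{s,t}\|_{L_p}\le Nn^{-1/2+\eps}(t-s)^{1/2}$, the $\eps$ absorbing a borderline logarithmic factor from the critical $(t-s)^{1/2}$ scaling (where the sharper $p$-dependence of Theorem~\ref{thm:Sewing-lemma-better-p} may enter).

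For the stability term I would again use stochastic sewing, now with germ $A_{s,t}:=\int_s^t\E^s[b(X_r)-b(X^n_r)]\,dr$, for which once more the $\E^s$-conditional mean of the defect vanishes. The heart of the matter is the bound
\begin{equ}
\|\E^s[b(X_r)-b(X^n_r)]\|_{L_p}\le N(r-s)^{(\alpha-1)/2}\|Z_s\|_{L_p}+Nn^{-1/2+\eps}(r-s)^{1/2}.
\end{equ}
Its first term comes from writing $\E^s[b(X_r)]=(P_{s,r}b)(X_s)$ with $P_{s,r}$ the transition operator of \eqref{eq:main mult}; the Aronson-type bound on its transition density $\rho_{s,r}$ (classically available for $X$ since $b\in\C^\alpha$, $\sigma\sigma^T\in\C^\alpha$ is elliptic), combined with the identity $\int\nabla_x\rho_{s,r}(x,y)\,dy=0$ used to exploit the H\"older regularity of $b$, gives $\|\nabla P_{s,r}b\|_\infty\le N[b]_{\C^\alpha}(r-s)^{(\alpha-1)/2}$; subtracting the values at $X_s$ and $X^n_s$ produces the stated term, and it is precisely because $(1+\alpha)/2>1/2$ that the hypothesis $\alpha>0$ is essential here. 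The second term measures the discrepancy between $\E^s[b(X^n_r)]$ and $(P_{s,r}b)(X^n_s)$, i.e.\ between the law of the frozen-coefficient Euler scheme and that of the true solution restarted at the (random) point $X^n_s$; this is itself a discretization error, which I would control using the density estimate \eqref{eq:main Malliavin} for the scheme together with the consistency bound already obtained. Since the defect of this germ satisfies the required $(t-s)^{1/2+\alpha/2}$ bound, the sewing lemma yields $\|K_{s,t}\|_{L_p}\le N[b]_{\C^\alpha}(t-s)^{(1+\alpha)/2}\sup_{r\in[s,t]}\|Z_r\|_{L_p}+Nn^{-1/2+\eps}(t-s)^{1/2}$.

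Combining the three bounds, and using $(t-s)^{(1+\alpha)/2}\le(t-s)^{1/2}$ for $t-s\le1$, gives for all $0\le s\le t\le1$
\begin{equ}
\|Z_t-Z_s\|_{L_p}\le N_*(t-s)^{1/2}\sup_{r\in[s,t]}\|Z_r\|_{L_p}+Nn^{-1/2+\eps}(t-s)^{1/2}.
\end{equ}
To close, I would partition $[0,1]$ into $\lceil1/T_0\rceil$ intervals of length $T_0$, with $T_0$ chosen (depending on $N_*$, hence on $p,d,\alpha,\lambda,\|b\|_{\C^\alpha},\|\sigma\|_{\C^2}$) so small that $N_*T_0^{1/2}\le1/2$; on each subinterval $[a,b]$ the supremum of $\|Z_\cdot\|_{L_p}$ over $[a,b]$ is absorbed into the left-hand side (using $\Psi<\infty$), giving $\sup_{[a,b]}\|Z_\cdot\|_{L_p}\le2\|Z_a\|_{L_p}+Nn^{-1/2+\eps}$, and iterating over the $n$-\emph{independent} number of subintervals yields $\sup_{t\in[0,1]}\|Z_t\|_{L_p}\le N(|x_0-x_0^n|+n^{-1/2+\eps})$. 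Substituting this back into the increment bound, every term is $\le N(|x_0-x_0^n|+n^{-1/2+\eps})(t-s)^{1/2}$, and since $\tau<1/2$ this is $\le N(|x_0-x_0^n|+n^{-1/2+\eps})(t-s)^{\tau}$; together with the supremum bound this is exactly \eqref{eq:main bound mult EM}. The main obstacle is the stability estimate for $K_{s,t}$, in particular its second, discretization term: because the Euler scheme freezes \emph{both} $b$ and $\sigma$, comparing its law with that of the true solution reintroduces the very error being estimated, so the argument must be arranged (a bootstrap on short time intervals, with the consistency bound and \eqref{eq:main Malliavin} feeding in) so that the loop closes without circularity; the borderline $(t-s)^{1/2}$ scaling in the consistency bound, which forces the $\eps$-loss, is the second delicate point.
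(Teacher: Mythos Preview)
Your overall architecture---split the error into the quadrature piece $L_{s,t}$, the drift-stability piece $K_{s,t}$, and the stochastic integral $M_{s,t}$; treat the first two by stochastic sewing with the Malliavin density input of Theorem~\ref{thm:density}; then buckle on short subintervals---is exactly the paper's. Your handling of $L_{s,t}$ (Girsanov to the drift-free scheme, then sewing with $\E^s\delta A=0$ and the density bound) and of $M_{s,t}$ (BDG plus $\|X^n_r-X^n_{\kappa_n(r)}\|_{L_p}\le Nn^{-1/2}$) matches Lemmas~\ref{lem:(ii) mult}--\ref{lem:we need better labels mult} and the elementary estimate \eqref{eq:BDG}.

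The gap is in the stability germ. You take $A_{s,t}=\E^s\!\int_s^t\!(b(X_r)-b(X^n_r))\,dr$ and split $\E^s[b(X_r)-b(X^n_r)]$ into $(\bar P_{r-s}b)(X_s)-(\bar P_{r-s}b)(X^n_s)$ plus the ``weak error'' $\E^s[b(X^n_r)]-(\bar P_{r-s}b)(X^n_s)$. The first piece is fine; the second is not delivered by the tools you invoke. The Malliavin estimate \eqref{eq:main Malliavin} bounds $|\E\partial_kG(\bar X^n_t)|$, i.e.\ a gradient of the \emph{scheme's} semigroup, but gives no comparison with the true transition kernel; and the quadrature bound controls $\int(b(X^n_r)-b(X^n_{\kappa_n(r)}))\,dr$, a different object. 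You flag this yourself as the main obstacle, but the bootstrap you sketch reintroduces the full strong error on $[s,r]$ and does not close without an independent weak-error estimate for $\C^\alpha$ test functions, which the paper does not prove and which would be a separate result.

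The paper sidesteps this by a different germ (Lemma~\ref{lem:gronwall mult}): write $X^n_r=X_r+\varphi_r$ and take $A_{s',t'}=\E^{s'}\!\int_{s'}^{t'}\!\big(b(X_r)-b(X_r+\varphi_{s'})\big)\,dr$, \emph{freezing} $\varphi$ at the left endpoint. Now both entries are functionals of the true diffusion $X$ shifted by an $\F_{s'}$-measurable constant, so $\E^{s'}[b(X_r+\varphi_{s'})]=(\bar P_{r-s'}b)(X_{s'}+\varphi_{s'})$ and the law of $X^n_r$ never enters. The price is that $\E^{s'}\delta A$ is no longer zero: it carries $\varphi_{s'}-\varphi_u$, so the sewing output is expressed in $\db{\varphi}_{\scC^\tau_p,[s,t]}$ with $\tau>1/2-\alpha/2$ rather than in $\sup\|Z_r\|_{L_p}$. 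This forces the two-stage closing of Section~6.3: first absorb $\db{\varphi^n}_{\scC^\tau_p}$ on short intervals and iterate, then run a genuine Gronwall on $\sup_t\|X_t-X^n_t\|_{L_p}$ using the stochastic-integral contribution, and finally feed everything back to get the $\scC^\tau$ bound.
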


\begin{Corollary}\label{cor:as multiplicative}
Let $\alpha\in(0,1]$, assume $x_0=x_0^n$ for all $n\in\N$, suppose $b\in\C^\alpha$, and suppose $\sigma$ satisfies \eqref{eq:elliptic}.
Let $\eps>0$, $\tau\in[0,1/2)$. Then there exists an almost surely finite random variable $\eta$ such that  for all $n\in \N$, $\omega\in\Omega$ the following bound holds
\begin{equ}
\sup_{t\in[0,1]}|X_t-X^n_t|\leq
\|X-X^n\|_{\C^{\tau}([0,1],\R^d)}
\leq
 \eta n^{-1/2+\eps}.
\end{equ}
\end{Corollary}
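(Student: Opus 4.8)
The plan is to follow the same route as in the proof of Corollary~\ref{cor:as fractional}: first upgrade the $L_p(\Omega)$-valued H\"older bound of Theorem~\ref{thm:main multiplicative} to a \emph{pathwise} H\"older bound (still measured in $L_p(\Omega)$) by Kolmogorov's continuity theorem, and then convert the resulting family of $L_p$-estimates (valid for every $p$) into an almost sure estimate via Proposition~\ref{prop:almost sure}. The first inequality in the statement, $\sup_{t\in[0,1]}|X_t-X^n_t|\le\|X-X^n\|_{\C^\tau([0,1],\R^d)}$, requires no argument, since by our convention the $\C^\tau$-norm always dominates the supremum norm.

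Concretely, fix $\eps>0$ and $\tau\in[0,1/2)$ as in the statement and choose auxiliary parameters $\tau'\in(\tau,1/2)$ and $\eps'\in(0,\eps)$. Writing $Z^n_t:=X_t-X^n_t$ and using $x_0=x_0^n$, Theorem~\ref{thm:main multiplicative} applied with H\"older exponent $\tau'$ and accuracy $\eps'$ yields, for every $p\ge2$,
\begin{equ}
\|Z^n\|_{\C^{\tau'}([0,1],L_p(\Omega))}\le N\,n^{-1/2+\eps'},
\end{equ}
with $N$ depending on $p$ but not on $n$; in particular $\|Z^n_t-Z^n_s\|_{L_p(\Omega)}\le N n^{-1/2+\eps'}|t-s|^{\tau'}$ for all $s,t\in[0,1]$ and $\|Z^n_0\|_{L_p(\Omega)}=0$. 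Since $X$ and $X^n$ are continuous processes, Kolmogorov's continuity theorem applies to $(Z^n_t)_{t\in[0,1]}$: choosing $p$ large enough that $\tau'-1/p>\tau$ gives
\begin{equ}
\Big\|\,\|X-X^n\|_{\C^{\tau}([0,1],\R^d)}\,\Big\|_{L_p(\Omega)}\le N\,n^{-1/2+\eps'},
\end{equ}
which is the exact analogue of \eqref{eq:frac main bound var} in the present setting (cf.\ Remark~\ref{remark:topology}).

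It remains to invoke Proposition~\ref{prop:almost sure} with the nonnegative random variables $\|X-X^n\|_{\C^{\tau}([0,1],\R^d)}$ and the rate $n^{-1/2+\eps'}$: since this step costs only an arbitrarily small power of $n$ and $\eps'<\eps$, it produces an almost surely finite random variable $\eta$ with $\|X-X^n\|_{\C^{\tau}([0,1],\R^d)}\le\eta\,n^{-1/2+\eps}$ for all $n\in\N$ and $\omega\in\Omega$, which is the claim. I do not expect any genuine obstacle here, as every ingredient is already in place; the only point requiring a little care is the order of quantifiers, namely that $\tau'\in(\tau,1/2)$ and $\eps'\in(0,\eps)$ must be fixed before $p$ is sent to infinity in the Kolmogorov step, so that the (arbitrarily small) losses incurred there and in Proposition~\ref{prop:almost sure} are absorbed into the prescribed $\eps$.
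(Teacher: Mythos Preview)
Your proposal is correct and follows the same route as the paper's own proof, which is the one-line observation that the claim is ``an immediate consequence of \eqref{eq:main bound mult EM}, Kolmogorov's continuity theorem, and Proposition~\ref{prop:almost sure}.'' Your write-up simply spells out the parameter choices ($\tau'\in(\tau,1/2)$, $\eps'\in(0,\eps)$, then $p$ large) that make these three steps fit together; nothing is missing.
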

\begin{proof}
An immediate consequence of \eqref{eq:main bound mult EM}, Kolmogorov's continuity theorem, and Proposition \ref{prop:almost sure} below.
\end{proof}

Let us conclude by invoking a simple fact used in the proof of Corollaries \ref{cor:as fractional} and \ref{cor:as multiplicative},
which goes back to at least \cite[proof of Theorem 2.3]{Gy} (see also \cite[Lemma 2]{Faure}).
\begin{proposition}\label{prop:almost sure}
Let $\rho>0$ and let $(Z_n)_{n\in\N}$ be a sequence of random variables such that for all $p>0$ and all $n\in\N$ one has the bound
\begin{equ}
\|Z_n\|_{L_p(\Omega)}\leq N n^{-\rho}
\end{equ}
for some $N=N(p)$. Then for all $\eps>0$ there exists an almost surely random variable $\eta$ such that for all $n\in\N$, $\omega\in\Omega$
\begin{equ}
|Z_n|\leq \eta n^{-\rho+\eps}.
\end{equ}
\end{proposition}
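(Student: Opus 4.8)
The plan is to use a Borel–Cantelli argument made quantitative by Chebyshev's inequality, exploiting that the $L_p$ bound holds for \emph{every} $p$. Fix $\eps>0$. For each $n$, apply Chebyshev's (Markov's) inequality with a power $p=p(\eps)$ to be chosen large:
\begin{equ}
\P\big(|Z_n|> n^{-\rho+\eps}\big)=\P\big(|Z_n|^p> n^{p(-\rho+\eps)}\big)\leq n^{p(\rho-\eps)}\,\E|Z_n|^p\leq N(p)^p\, n^{p(\rho-\eps)}n^{-p\rho}=N(p)^p\,n^{-p\eps}.
\end{equ}
Choosing $p>1/\eps$ makes the exponent $-p\eps<-1$, so $\sum_{n\in\N}\P(|Z_n|>n^{-\rho+\eps})<\infty$. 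By the Borel–Cantelli lemma, almost surely there is a (random) $n_0(\omega)$ such that $|Z_n(\omega)|\leq n^{-\rho+\eps}$ for all $n\geq n_0(\omega)$.

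It remains to absorb the finitely many indices $n<n_0(\omega)$ and to produce a single random variable $\eta$ that works uniformly in $n$. The natural choice is
\begin{equ}
\eta:=\sup_{n\in\N} |Z_n|\, n^{\rho-\eps},
\end{equ}
which is finite almost surely: on the event where $|Z_n|\leq n^{-\rho+\eps}$ eventually, each term with $n\geq n_0(\omega)$ is $\leq1$, and the remaining finitely many terms are finite, so the supremum is a finite random variable. With this $\eta$ one has $|Z_n|\leq \eta\, n^{-\rho+\eps}$ for all $n\in\N$ and all $\omega$ in the almost sure event, as required. (Outside this null set one may set $\eta=+\infty$, or redefine it arbitrarily; the statement only asks for an almost surely finite $\eta$.)

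There is essentially no obstacle here; the only point requiring a little care is that the same $\eta$ must serve all $n$ simultaneously, which is why one packages the estimate as a supremum rather than merely invoking Borel–Cantelli to get an eventual bound. Note also that the hypothesis is used with $p$ depending on $\eps$, so the fact that the $L_p$ bound is assumed for all $p>0$ (not just one fixed $p$) is essential — this is exactly why, in the applications, having $L_p(\Omega)$ rates for every $p$ rather than just $L_2(\Omega)$ rates is what upgrades the convergence to an almost sure statement.
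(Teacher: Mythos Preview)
Your proof is correct and follows essentially the same approach as the paper: Chebyshev's inequality with a sufficiently large exponent (the paper takes $q=2/\eps$), Borel--Cantelli, and then packaging the bound into a single random variable $\eta$. The only cosmetic difference is that the paper defines $\eta:=1\vee\max_{n\leq n_0}(|Z_n|n^{\rho-\eps})$ rather than the full supremum, but once the Borel--Cantelli conclusion is in hand these are equivalent.
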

\begin{proof}
Notice that for any $q>0$
\begin{equ}
\sum_{n\in\N}\bP(|Z_n|>n^{-\rho+\eps})\leq \sum_{n\in\N}\frac{\E|Z_n|^q}{n^{q(-\rho+\eps)}}\leq \sum_{n\in\N} N n^{-q\eps}.
\end{equ}
Choosing $q=2/\eps$, the above sum is finite, so by the Borel-Cantelli lemma there exists an almost surely finite $\N$-valued random variable $n_0$ such that $|Z_n|\leq n^{-\rho+\eps}$ for all $n>n_0$. This yields the claim by setting
\begin{equation*}
\eta:=1\vee\max_{n\leq n_0}(|Z_n|n^{\rho-\eps}).\qedhere
\end{equation*}
\end{proof}

\section{Preliminaries}\label{sec:prelim-bigsec}

\subsection{The outline of the strategy}\label{sec:outline}
The purpose of this section is to outline the main steps
in a simple example.
Hopefully this gives a clear picture of the strategy to the reader, which otherwise may be blurred by the some complications arising
in the proofs of Theorems \ref{thm:main fractional} and \ref{thm:main multiplicative}.

The `simple example' will be the setting of \eqref{eq:main frac} and \eqref{eq:main approx frac} with $H=1/2$ and $f\in\C^\alpha$ for some $\alpha>0$.
We furthermore assume $x_0=x_0^n$ and that the time horizon is given by $[0,T_0]$ instead of $[0,1]$, with some small $1\geq T_0>0$ to be chosen later. Finally, we will only aim to prove \eqref{eq:frac main bound} with $\tau=1/2$.

\emph{Step 1 ("Quadrature bounds")} Our first goal is to bound the quantity 
\begin{equ}\label{eq:referee}
\A_{T_0}:=\int_0^{T_0} b(B_r)-b(B_{\kappa_n(r)})\,dr.
\end{equ}
From the H\"older continuity of $b$, one would have the trivial bound of order $n^{-\alpha/2}$ in any $L_p(\Omega)$ norm, but in fact one can do much better, as follows.
Fix $\eps\in(0,1/2)$ and define (recall that by $\E^s$ we denote the conditional expectation given $\F_s$)
\begin{equ}
A_{s,t}=\E^s(\A_t-\A_s)=\E^s\int_s^t b(B_r)-b(B_{\kappa_n(r)})\,dr.
\end{equ}
The stochastic sewing lemma, Proposition \ref{thm:Sewing-lemma} below, allows one to bound $\A$ through bounds on $A$. 
Given the preceding field $A_{s,t}$, provided that the conditions \eqref{SSL1} and \eqref{SSL2} are satisfied, it is easy to check that the unique adapted
process $\A$ constructed in Proposition \ref{thm:Sewing-lemma} coincides with the one in \eqref{eq:referee}.
Indeed, the process in \eqref{eq:referee} satisfies \eqref{SSL1 cA} and \eqref{SSL2 cA} with $\eps_1=\eps$, $\eps_2=1$,
$K_1=\|b\|_{\C^0}$ and $K_2=0$.
Therefore it remains to find $C_1$ and $C_2$. In fact, it is immediate that one can choose $C_2=0$, since
$\E^s\delta A_{s,u,t}=\E^s(A_{s,t}-A_{s,u}-A_{u,t})=0$.

We now claim that one can take $C_1=Nn^{-1/2-\alpha/2+\eps}$ in \eqref{SSL1}.
Since $\|b(B_r)-b(B_{\kappa_n(r)})\|_{L_p(\Omega)}\leq \|b\|_{\C^\alpha}n^{-\alpha/2}$,
if $|t-s|\leq 2 n^{-1}$, then one easily gets by the conditional Jensen's inequality
\begin{equ}\label{eq:explanation1}
\|A_{s,t}\|_{L_p(\Omega)}\leq N |s-t|n^{-\alpha/2}\leq N |s-t|^{1/2+\eps}n^{-1/2-\alpha/2+\eps}.
\end{equ}
If $|t-s|>2n^{-1}$, let $s'=\kappa_n(s)+2n^{-1}$ be the second gridpoint to the right of $s$. In particular, $r\geq s'$ implies $\kappa_n(r)\geq s$.
Let us furthermore notice that for any $u\geq v$ and any bounded measurable function $f$, one has $\E^v f(B_u)=\cP_{u-v}f(B_v)$,
where $\cP$ is the standard heat kernel (see \eqref{eq:p-def} below for a precise definition).
One can then write
\begin{equs}
\|A_{s,t}\|_{L_p(\Omega)}&\leq \int_s^{s'}\|b(B_r)-b(B_{\kappa_n(r)})\|_{L_p(\Omega)}\,dr+\big\|\int_{s'}^t\E^s b(B_r)-\E^s b(B_{\kappa_n(r)})\,dr\big\|_{L_p(\Omega)}
\\
&\leq N n^{-1-\alpha/2}+\int_{s'}^t\|(\cP_{r-s}-\cP_{\kappa_n(r)-s})b\|_{\C^0}\,dr
\\
&\leq N n^{-1-\alpha/2}+N\int_{s'}^t(r-s')^{-1/2+\eps}n^{-1/2-\alpha/2+\eps}\,dr
\\
&\leq N|t-s|^{1/2+\eps}n^{-1/2-\alpha/2+\eps}\label{eq:explanation2}
\end{equs}
where in the third line we used a well-known estimate for heat kernels, see Proposition \ref{prop:HK} (ii) with exponents $\beta=0$, $\delta=1/2+\alpha/2-\eps$, and time points $\kappa_n(r)-s$ in place of $s$, $r-s$ in place of $t$. 
We also used that for $r\geq s'$, one has $\kappa_n(r)-s\geq r-s'$.
By \eqref{eq:explanation1} and \eqref{eq:explanation2} we indeed get \eqref{SSL1} with $C_1=N n^{-1/2-\alpha/2+\eps}$.
Applying the stochastic sewing lemma, \eqref{SSL3 cA} yields
\begin{equ}
\|\A_t-\A_s\|_{L_p(\Omega)}=\big\|\int_s^t b(B_r)-b(B_{\kappa_n(r)})\,dr\big\|_{L_p(\Omega)}\leq N|t-s|^{1/2+\eps}n^{-1/2-\alpha/2+\eps}
\end{equ}
for all $0\leq s\leq t\leq T_0$. 
Here the constant $N$ depends on $p,\eps,\alpha,d,\|b\|_{C^\alpha}$, but \emph{not} on $T_0$.

\emph{Step 1.5 (Girsanov transform)} An easy application of Girsanov's theorem yields
\begin{equ}\label{blabla1}
\big\|\int_s^t b(X_r^n)-b(X^n_{\kappa_n(r)})\,dr\big\|_{L_p(\Omega)}\leq N|t-s|^{1/2+\eps}n^{-1/2-\alpha/2+\eps}.
\end{equ}
In general (for example, for fractional Brownian motions) the Girsanov transformation can become involved, but for our present example this is completely straightforward.

\emph{Step 2 ("regularization bound")} Next, we estimate the quantity
\begin{equ}
\A_{T_0}=\int_0^{T_0}b(B_r+\psi_r)-b(B_r+\varphi_r)\,dt
\end{equ}
for some adapted processes $\psi,\varphi$ whose Lipschitz norm is bounded by some constant $K$.
As suggested by the above notation, we use the stochastic sewing lemma again, with $A_{s,t}$ defined as
\begin{equ}
A_{s,t}=\E^s\int_s^t b(B_r+\psi_s)-b(B_r+\varphi_s)\,dr.
\end{equ}
We do not give the details of the calculations at this point.
It is an instructive exercise to the interested reader to verify that \eqref{SSL1} and \eqref{SSL2} are satisfied with $\eps_1=\alpha/2$, $C_1=N\db{\psi-\varphi}_{\scC^0_p,[0,T_0]}$ and $\eps_2=\alpha/2$, $C_2=N\db{\psi-\varphi}_{\scC^{1/2}_p,[0,T_0]}$.
Here $N$ depends on $p,\alpha,d,K,\|b\|_{\C^\alpha}$, but \emph{not} on $T_0$.
The bound \eqref{SSL1 cA} is straightforward, with $K_1=\|b\|_{\C^0}$. Concerning \eqref{SSL2 cA}, one can write
\begin{equ}
|\E^s(\A_t-\A_s-A_{s,t})|\leq\E^s\int_s^t\big|b(B_r+\psi_r)-b(B_r+\psi_s)\big|+\big|b(B_r+\varphi_r)-b(B_r+\varphi_s)\big|\,dr,
\end{equ}
and so $K_2=2K\|b\|_{\C^\alpha}$ does the job.
Therefore, by \eqref{SSL3 cA}, we get
\begin{equs}
\|\A_t-\A_s\|_{L_p(\Omega)}&=\big\|\int_s^t  b(B_r+\psi_r)-b(B_r+\varphi_r)\,dr\big\|_{L_p(\Omega)}
\\
&\leq N
|t-s|^{1/2+\alpha/2}\db{\psi-\varphi}_{\scC^0_p,[0,T_0]}+N
|t-s|^{1+\alpha/2}\db{\psi-\varphi}_{\scC^{1/2}_p,[0,T_0]}.
\end{equs}
We will only apply the following simple corollary of this bound: if $\psi_0=\varphi_0$, then
\begin{equ}\label{blabla2}
\big\|\int_s^t b(B_r+\psi_s)-b(B_r+\varphi_s)\,dr\big\|_{L_p(\Omega)}\leq N
|t-s|^{1/2+\alpha/2}\db{\psi-\varphi}_{\scC^{1/2}_p,[0,T_0]}.
\end{equ}

\emph{Step 3 ("Buckling")} Let $\psi$ and $\psi^n$ be the drift component of $X$ and $X^n$, respectively:
\begin{equ}
\psi_t=x_0+\int_0^t b(X_r)\,dr,\qquad\psi^n_t=x_0+\int_0^t b(X^n_{\kappa_n(r)})\,dr.
\end{equ}
We apply \eqref{blabla1} and \eqref{blabla2} with $\varphi=\psi^n$, to get
\begin{equs}
\|(\psi-\psi^n)_t-(\psi-\psi^n)_s\|_{L_p(\Omega)}
&\leq Nn^{-1/2-\alpha/2+\eps}|t-s|^{1/2+\eps}
\\
&\quad+ N
|t-s|^{1/2+\alpha/2}\db{\psi-\psi^n}_{\scC^{1/2}_p,[0,T_0]}.
\end{equs}
Dividing by $|t-s|^{1/2}$ and take supremum over $0\leq s\leq t\leq T_0$, one gets
\begin{equ}
\db{\psi-\psi^n}_{\scC^{1/2}_p,[0,T_0]}\leq
Nn^{-1/2-\alpha/2+\eps}
+NT_0^{\alpha/2} \db{\psi-\psi^n}_{\scC^{1/2}_p,[0,T_0]}.
\end{equ}
Since so far $N$ does not depend on $T_0$, one can choose $T_0$ sufficiently small so that $NT_0^{\alpha/2}\leq 1/2$. This yields the desired bound
\begin{equ}
\db{X-X^n}_{\scC^{1/2}_p,[0,T_0]}=\db{\psi-\psi^n}_{\scC^{1/2}_p,[0,T_0]}
\leq Nn^{-1/2-\alpha/2+\eps}.
\end{equ}
\qed

Let us point out that the rate of convergence is determined by only the first step. Also, the second step is similar in spirit to the `averaging bounds' appearing in sewing-based uniqueness proofs for SDEs (see e.g. \cite{Cat-Gub, Khoa}).

In the proof of Theorem \ref{thm:main fractional}, the more difficult part will be the regularization bound.
Applying only the stochastic sewing lemma of L\^e apparently does not lead to an optimal result for $H>1/2$.
Therefore at some point one has to move from almost sure bounds (which are similar to \cite{Cat-Gub}) to $L_p$ bounds.
This requires an extension of the Davie's moment bound \cite[Proposition~2.1]{Davie} to the case of the fractional Brownian motion. This is done in Lemma~\ref{L:Daviebound} using the new stochastic sewing lemma (Theorem~\ref{thm:Sewing-lemma-better-p}).

In contrast, for Theorem \ref{thm:main multiplicative} establishing the quadrature bound will be more difficult.
In the above arguments, the heat kernel bounds have to be replaced by estimates on the transition densities of the Euler-Maruyama scheme.
These bounds are established via Malliavin calculus, this is the content of Section \ref{sec:Malliavin}.

\subsection{Sewing lemmas}

As mentioned above, the proof strategy
relies on the sewing and stochastic sewing lemmas. For the convenience of the reader, we recall them here. The first two lemmas are well-known, the third one is new.

We define for $0\leq S\leq T\leq 1$ the set $[S,T]_\leq:=\{(s,t):\,S\leq s\leq t\leq T\}$.
If $A_{\cdot,\cdot}$ is a function $[S,T]_\leq\to\R^d$, then for $s\leq u\leq t$ we put $\delta A_{s,u,t}:=A_{s,t}-A_{s,u}-A_{u,t}$. The first statement is the sewing lemma of Gubinelli.

\begin{Proposition} [{\cite[Lemma~2.1]{FeyelPradelle}, \cite[Proposition~1]{Gubin04}}]           \label{thm:det-Sewing-lemma}
Let $0\leq S\leq T\leq 1$ and let $A_{\cdot,\cdot}$ be a continuous function from $[S,T]_\leq$ to $\R^d$. Suppose that for some $\eps>0$ and $C>0$ the bound
\begin{equation}\label{SSLdelta}
|\delta A_{s,u,t}| \leq C |t-s|^{1+\eps}
\end{equation}
holds for all $S\leq s\leq u\leq t\leq T$.
Then there exists a unique function $\A:[S,T]\to \R^d$ such that $\A_S=0$ and the following bound holds for some constant $K>0$:
\begin{equation}\label{DSLuniqueness}
|\A_t	-\A_s-A_{s,t}|\leq K |t-s|^{1+\eps}, \quad (s,t)\in[S,T]_\leq.
\end{equation}
Moreover, there exists a constant $K_0$ depending only on $\eps$, $d$ such that $\A$
 in fact satisfies the above bound with $K\leq K_0 C$.
\end{Proposition}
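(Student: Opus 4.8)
The plan is the classical partition (dyadic coarsening) argument, and I would separate uniqueness from existence.

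\emph{Uniqueness.} If $\A$ and $\tilde\A$ both vanish at $S$ and satisfy \eqref{DSLuniqueness} with constants $K,\tilde K$, then $D:=\A-\tilde\A$ satisfies $|D_t-D_s|\le(K+\tilde K)|t-s|^{1+\eps}$ for all $(s,t)\in[S,T]_\le$, since the $A_{s,t}$-terms cancel in the triangle inequality. Splitting $[s,t]$ into $n$ equal subintervals and summing gives $|D_t-D_s|\le(K+\tilde K)|t-s|^{1+\eps}n^{-\eps}$; letting $n\to\infty$ shows $D$ is constant, hence $D\equiv D_S=0$. (This step uses nothing about continuity.)

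\emph{Existence.} For $[s,t]\subset[S,T]$ and a partition $\pi=\{s=r_0<\dots<r_m=t\}$ put $A^\pi:=\sum_{i=0}^{m-1}A_{r_i,r_{i+1}}$. The combinatorial core is: when $m\ge2$, since $\sum_{j=1}^{m-1}(r_{j+1}-r_{j-1})\le2(t-s)$, there is an interior point $r_j$ with $r_{j+1}-r_{j-1}\le 2(t-s)/(m-1)$; deleting it changes the Riemann sum by exactly $\delta A_{r_{j-1},r_j,r_{j+1}}$, hence by at most $C\big(2(t-s)/(m-1)\big)^{1+\eps}$. Iterating this coarsening from $\pi$ down to the trivial partition $\{s,t\}$ and summing the resulting convergent series yields, for \emph{every} partition,
\[
|A^\pi-A_{s,t}|\le K_0\,C\,|t-s|^{1+\eps},\qquad K_0:=2^{1+\eps}\sum_{k\ge1}k^{-1-\eps},
\]
with $K_0$ depending only on $\eps$ and $d$.

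\emph{Passage to the limit and conclusion.} Comparing two partitions through their common refinement and applying the displayed bound on each subinterval gives $|A^{\pi_1}-A^{\pi_2}|\le K_0 C(\|\pi_1\|^\eps+\|\pi_2\|^\eps)|t-s|$ in terms of mesh sizes, so $(A^\pi)$ is a Cauchy net as the mesh tends to $0$ and converges to some $\cI_{s,t}$; refining through an intermediate point shows $\cI_{s,t}=\cI_{s,u}+\cI_{u,t}$, and continuity of $A$ makes $(s,t)\mapsto\cI_{s,t}$ continuous. Setting $\A_t:=\cI_{S,t}$ we get $\A_S=0$, $\A_t-\A_s=\cI_{s,t}$ by additivity, and passing to the limit in the displayed bound gives \eqref{DSLuniqueness} with $K=K_0 C$. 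Together with the uniqueness part, this is the claim. The only substantial point is the coarsening estimate — removing, at each stage, a point whose removal costs $O\big((t-s)^{1+\eps}m^{-1-\eps}\big)$ so that the telescoping series converges, which is handled by one pigeonhole choice; uniqueness, the Cauchy-net argument, and additivity are all routine.
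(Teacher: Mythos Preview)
Your proof is correct and follows exactly the classical dyadic/coarsening argument of the cited references; the paper itself does not supply a proof of this proposition but simply invokes \cite[Lemma~2.1]{FeyelPradelle} and \cite[Proposition~1]{Gubin04}, so there is nothing to compare against beyond noting that your argument is the standard one from those sources. One cosmetic remark: your constant $K_0=2^{1+\eps}\sum_{k\ge1}k^{-1-\eps}$ actually depends only on $\eps$, not on $d$, which is harmless since the statement allows dependence on both.
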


The next statement is the stochastic extension of the above result obtained by L\^e. Recall that for any $s\ge0$ we are using the convention $\E^s[...]:=\E[...|\F_s]$.
\begin{Proposition} [{\cite[Theorem 2.4]{Khoa}}]           \label{thm:Sewing-lemma}
Let $p\geq 2$, $0\leq S\leq T\leq 1$ and let $A_{\cdot,\cdot}$ be a function $[S,T]_\leq\to L_p(\Omega,\R^d)$ such that for any $(s,t)\in[S,T]_\leq$ the random vector $A_{s,t}$ is $\F_t$-measurable. Suppose that for some $\eps_1,\eps_2>0$ and $C_1,C_2$ the bounds
\begin{equs}
\|A_{s,t}\|_{L_p(\Omega)} & \leq C_1|t-s|^{1/2+\eps_1},\label{SSL1}
\\
\|\E^s\delta A_{s,u,t}\|_{L_p(\Omega)} & \leq C_2 |t-s|^{1+\eps_2}\label{SSL2}
\end{equs}
hold for all $S\leq s\leq u\leq t\leq T$.
Then there exists a unique (up to modification) $\bF$-adapted 
process $\A:[S,T]\to L_p(\Omega,\R^d)$ such that $\A_S=0$ and the following bounds hold for some constants $K_1,K_2>0$:
\begin{align}
\|\A_t	-\A_s-A_{s,t}\|_{L_p(\Omega)} & \leq K_1 |t-s|^{1/2+\eps_1}+K_2 |t-s|^{1+\eps_2},\quad (s,t)\in[S,T]_\leq,\label{SSL1 cA}
\\
\|\E^s\big(\A_t	-\A_s-A_{s,t}\big)\|_{L_p(\Omega)} & \leq K_2|t-s|^{1+\eps_2},\quad (s,t)\in[S,T]_\leq\label{SSL2 cA}.
\end{align}
Moreover, there exists a constant $K$ depending only on $\eps_1,\eps_2$, $d$ such that $\A$
satisfies the bound
\begin{equation}\label{SSL3 cA}
\|\A_t-\A_s\|_{L_p(\Omega)}  \leq  KpC_1 |t-s|^{1/2+\eps_1}+KpC_2 |t-s|^{1+\eps_2},\quad (s,t)\in[S,T].
\end{equation}
\end{Proposition}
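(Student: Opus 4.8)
The plan is to follow L\^e's proof: the process $\A$ is built as the $L_p(\Omega)$-limit of Riemann-type sums of $A$ along refining partitions, exactly as in the deterministic sewing lemma (Proposition~\ref{thm:det-Sewing-lemma}), the point being that the single missing half-power of regularity in \eqref{SSL1} (whose exponent is $1/2+\eps_1$, not $1+\eps_1$) is compensated by the martingale structure hidden in the $\F_t$-measurability of $A_{s,t}$. Concretely, for $S\le s\le t\le T$ and a partition $\pi=\{s=r_0<\cdots<r_m=t\}$ set $\A^\pi_{s,t}:=\sum_{i=0}^{m-1}A_{r_i,r_{i+1}}$, which is $\F_t$-measurable; let $\pi_n$ be the partition of $[s,t]$ into $2^n$ equal intervals and $m_i$ the midpoint of its $i$-th interval. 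Since passing from $\pi_n$ to $\pi_{n+1}$ inserts the points $m_i$,
\begin{equ}
\A^{\pi_{n+1}}_{s,t}-\A^{\pi_n}_{s,t}=-\sum_{i=0}^{2^n-1}\delta A_{r_i,m_i,r_{i+1}},
\end{equ}
and the whole argument hinges on estimating the $L_p$-norm of this sum and summing over $n$.

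The key step is to split, for each interval of $\pi_n$,
\begin{equ}
\delta A_{r_i,m_i,r_{i+1}}=\E^{r_i}\delta A_{r_i,m_i,r_{i+1}}+\bigl(\delta A_{r_i,m_i,r_{i+1}}-\E^{r_i}\delta A_{r_i,m_i,r_{i+1}}\bigr)=:E_i+D_i.
\end{equ}
The terms $E_i$ are controlled directly by \eqref{SSL2}: $\|E_i\|_{L_p}\le C_2(2^{-n}|t-s|)^{1+\eps_2}$, so $\|\sum_iE_i\|_{L_p}\le C_2|t-s|^{1+\eps_2}2^{-n\eps_2}$ by the triangle inequality. The terms $D_i$ are $\F_{r_{i+1}}$-measurable with $\E^{r_i}D_i=0$, so $\bigl(\sum_{i\le k}D_i\bigr)_k$ is a discrete-time martingale; the Burkholder--Davis--Gundy inequality (with its $O(\sqrt p)$ constant) together with Minkowski's inequality in $L_{p/2}$ gives $\|\sum_iD_i\|_{L_p}\le N\sqrt p\,\bigl(\sum_i\|D_i\|_{L_p}^2\bigr)^{1/2}$, and since $\|D_i\|_{L_p}\le 2\|\delta A_{r_i,m_i,r_{i+1}}\|_{L_p}\le 6C_1(2^{-n}|t-s|)^{1/2+\eps_1}$ by \eqref{SSL1}, summing the $2^n$ terms yields $\|\sum_iD_i\|_{L_p}\le N\sqrt p\,C_1|t-s|^{1/2+\eps_1}2^{-n\eps_1}$ with $N=N(\eps_1,d)$. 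This is the heart of the matter and the only place anything beyond deterministic sewing enters: summing $2^n$ martingale increments costs merely $2^{n/2}$ rather than $2^n$, which is exactly why the exponent $1/2+\eps_1$ with $\eps_1>0$ is enough.

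Granting this, I would finish as follows. Summing the refinement identity over $n$ and using the two bounds above shows $(\A^{\pi_n}_{s,t})_n$ is Cauchy in $L_p(\Omega,\R^d)$; denote its limit by $\A^{[s,t]}$. Summing the geometric series over $n\ge0$ (with $\A^{\pi_0}_{s,t}=A_{s,t}$) already gives
\begin{equ}
\|\A^{[s,t]}-A_{s,t}\|_{L_p}\le K_1|t-s|^{1/2+\eps_1}+K_2|t-s|^{1+\eps_2}
\end{equ}
for every $(s,t)$, with $K_1\lesssim\sqrt p\,C_1$, $K_2\lesssim C_2$, constants depending only on $\eps_1,\eps_2,d$; moreover every subinterval $[a,b]$ (with midpoint $m$) arising in the telescoping sum on $[s,t]$ has $a\ge s$, so by the tower property $\E^s D_i=\E^s\delta A_{a,m,b}-\E^s\E^a\delta A_{a,m,b}=0$, and applying $\E^s$ to the sum leaves only the $E_i$-part, yielding $\|\E^s(\A^{[s,t]}-A_{s,t})\|_{L_p}\le K_2|t-s|^{1+\eps_2}$. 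To obtain a genuine process one must check additivity, $\A^{[s,t]}=\A_t-\A_s$ with $\A_t:=\A^{[S,t]}$, $\A_S:=0$; this amounts to comparing the dyadic grids of $[s,t]$, $[s,u]$ and $[u,t]$ and is provided by the standard part of the sewing machinery (as in the proof of \cite[Theorem 2.4]{Khoa}), showing that the Riemann sums in fact converge along any sequence of partitions with vanishing mesh. With additivity in hand, $\A$ is $\bF$-adapted (each $\A^{\pi_n}_{S,t}$ is $\F_t$-measurable), \eqref{SSL1 cA} and \eqref{SSL2 cA} are the two displayed bounds, and \eqref{SSL3 cA} follows by adding $\|A_{s,t}\|_{L_p}$ and absorbing constants into the prefactor $Kp$.

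For uniqueness, if $\tilde\A$ is another adapted process with $\tilde\A_S=0$ satisfying \eqref{SSL1 cA}--\eqref{SSL2 cA}, then $\Psi:=\A-\tilde\A$ is adapted, $\Psi_S=0$, the $A_{s,t}$-terms cancel, and $\|\Psi_t-\Psi_s\|_{L_p}\lesssim|t-s|^{1/2+\eps_1}+|t-s|^{1+\eps_2}$, $\|\E^s(\Psi_t-\Psi_s)\|_{L_p}\lesssim|t-s|^{1+\eps_2}$. Fixing $t$ and decomposing $\Psi_t-\Psi_S$ along the dyadic partition $\{r_i\}$ of $[S,t]$ into $\sum_i\E^{r_i}(\Psi_{r_{i+1}}-\Psi_{r_i})$ plus a sum of martingale increments, the former has $L_p$-norm $\lesssim 2^{-n\eps_2}$ and, by BDG exactly as above, the latter $\lesssim 2^{-n\eps_1}$; letting $n\to\infty$ forces $\Psi_t=0$ a.s.\ for every $t$, i.e.\ $\A=\tilde\A$ up to modification. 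I expect the only real friction to be the additivity/arbitrary-partition point in the previous paragraph; everything else is either the martingale estimate of the second paragraph or routine bookkeeping.
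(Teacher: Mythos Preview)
The paper does not prove this proposition at all: it is stated as a direct citation of \cite[Theorem~2.4]{Khoa} and no proof is given. Your sketch is precisely L\^e's argument---dyadic refinement, the splitting $\delta A_{r_i,m_i,r_{i+1}}=E_i+D_i$ into the conditional-expectation part (controlled by \eqref{SSL2}) and the martingale-difference part (controlled via BDG and \eqref{SSL1}), then geometric summation---and it is correct as an outline. The additivity step you flag as ``the only real friction'' is indeed the standard partition-independence argument from L\^e's paper, and your uniqueness argument is the right one.

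One small point on constants: you obtain $K_1\lesssim\sqrt p\,C_1$, whereas the proposition as stated (and as tracked in L\^e's paper) records $KpC_1$ in \eqref{SSL3 cA}. The paper's Remark after Theorem~\ref{thm:Sewing-lemma-better-p} explicitly asserts that L\^e's discrete-martingale approach yields the constant $p$, and that obtaining $\sqrt p$ is the novelty of Theorem~\ref{thm:Sewing-lemma-better-p} (achieved by applying BDG \emph{once} to a continuous martingale rather than at every dyadic level). Your $\sqrt p$ is of course consistent with \eqref{SSL3 cA}; whether the discrete-BDG constant is really $O(\sqrt p)$ or $O(p)$ at each level depends on the precise version one cites, but since you are not claiming the sharper constant as part of the statement, this does not affect the validity of your proof of the proposition as written.
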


The final statement of this section is new. It provides bounds on $\|\A_s-\A_t\|_{L_p(\Omega)}$ with the correct dependence on $p$: namely these bounds are of order $\sqrt p$, rather than $p$ as in \eqref{SSL3 cA}. This will be crucial for the proof of Theorem~\ref{thm:main fractional}; in particular, this would allow to extend the corresponding Davie bound \cite[Proposition~2.1]{Davie} to the case of fractional Brownian motion. The price to pay though is that the assumptions of this theorem are more restrictive than the corresponding assumptions of \cite[Theorem 2.4]{Khoa}.

\begin{Theorem}\label{thm:Sewing-lemma-better-p}
Fix $0\leq S\leq T\leq 1$. Let $(\A_t)_{t\in[S,T]}$ be an $\bF$--adapted process with values in
$\R^d$. For $(s,t)\in [S,T]_\leq$ we will write $\A_{s, t}:=\A_t-\A_s$.
Let $p\geq 2$. Suppose that for some $m\ge 2$, $\eps_1>0$, $\eps_2\ge0$, $\eps_3\ge0$, and $C_1,C_2, C_3>0$ the bounds
\begin{align}
&\|\A_{s,t}\|_{L_{p\vee m}(\Omega)}\le C_1 |t-s|^{1/2+\eps_1}\label{ass1}\\
&\|\E^s\A_{u,t}-\E^u\A_{u,t}\|_{L_m(\Omega)}\le C_1 |u-s|^{1/m+\eps_1}\label{ass11}\\
&\|\E^s\A_{s,t}\|_{L_p(\Omega)}\le C_2 |t-s|^{\eps_2}\label{ass1.1}\\
&\bigl\|\E^s[(\E^s\A_{u,t}-\E^u\A_{u,t})^2]\bigr\|_{L_{p/2}(\Omega)}\le C_3 |u-s||t-s|^{\eps_3}\label{ass2}
\end{align}
hold for all $S\leq s\leq u\leq t\leq T$.
Then there exist a universal constant $K=K(d,\eps_2,\eps_3)>0$ which does not depend on $p$, $C_j$, such that
\begin{equation}\label{mainresult}
\|\A_{t}-\A_s\|_{L_p(\Omega)}\le C_2K|t-s|^{\eps_2}+K\sqrt p\,C_3^{1/2}|t-s|^{1/2+\eps_3/2}.
\end{equation}

\end{Theorem}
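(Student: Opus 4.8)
\emph{Overall strategy.} I would run a dyadic stochastic--sewing argument directly on $\A$. Fix $(s,t)\in[S,T]_{\le}$, and for $n\ge 0$ let $\pi_n=\{s=r_0<r_1<\dots<r_{2^n}=t\}$ be the $n$-th dyadic partition of $[s,t]$; below $[a,b]\in\pi_n$ abbreviates ``$[a,b]$ ranges over the $2^n$ consecutive pairs $[r_j,r_{j+1}]$''. Set $\A^{(n)}_{s,t}:=\sum_{i=0}^{2^n-1}\E^{r_i}(\A_{r_{i+1}}-\A_{r_i})$. Two bookkeeping observations come first. Since $\A_{s,t}=\sum_i(\A_{r_{i+1}}-\A_{r_i})$ telescopes, the discrepancy $\A_{s,t}-\A^{(n)}_{s,t}=\sum_i\bigl[(\A_{r_{i+1}}-\A_{r_i})-\E^{r_i}(\A_{r_{i+1}}-\A_{r_i})\bigr]$ is a sum of a martingale difference sequence (w.r.t. $(\F_{r_i})_i$), whose summands have $L_p$-norm at most $2C_1(|t-s|2^{-n})^{1/2+\eps_1}$ by \eqref{ass1}; hence, by the Burkholder--Davis--Gundy inequality (whose $L_p$-constant grows like $\sqrt p$), $\|\A_{s,t}-\A^{(n)}_{s,t}\|_{L_p}\le N\sqrt p\,C_1|t-s|^{1/2+\eps_1}2^{-n\eps_1}\to 0$ as $n\to\infty$. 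Also $\A^{(0)}_{s,t}=\E^s\A_{s,t}$, which is bounded by $C_2|t-s|^{\eps_2}$ by \eqref{ass1.1}. So it suffices to bound $M^{(n)}:=\A^{(n)}_{s,t}-\E^s\A_{s,t}$, and in the limit $M^{(n)}\to\A_{s,t}-\E^s\A_{s,t}$ in $L_p$.

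\emph{The key identity.} Telescoping the operators $\E^{r_i}-\E^s$ and interchanging the order of summation yields
\[
M^{(n)}=\sum_{[a,b]\in\pi_n}\bigl(\E^b\A_{b,t}-\E^a\A_{b,t}\bigr),\qquad \A_{b,t}:=\A_t-\A_b .
\]
The summand attached to $[a,b]$ is $\F_b$-measurable with $\E^a$-mean zero, so, ordered by right endpoints, these summands form a martingale difference sequence with respect to $(\F_b)$; moreover each is, up to sign, exactly the quantity $\E^s\A_{u,t}-\E^u\A_{u,t}$ appearing in \eqref{ass11} and \eqref{ass2} (with $a$ in the role of $s$ and $b$ in the role of $u$). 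Applying Burkholder--Davis--Gundy once more,
\[
\|M^{(n)}\|_{L_p}\le N\sqrt p\,\bigl\|[M^{(n)}]\bigr\|_{L_{p/2}}^{1/2},\qquad [M^{(n)}]:=\sum_{[a,b]\in\pi_n}\bigl(\E^b\A_{b,t}-\E^a\A_{b,t}\bigr)^2 ,
\]
and the exponent $1/2$ on $p$ is precisely the gain over the factor $p$ in \eqref{SSL3 cA}.

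\emph{The bracket.} Decompose $[M^{(n)}]=\langle M^{(n)}\rangle+R_n$ with $\langle M^{(n)}\rangle:=\sum_{[a,b]\in\pi_n}\E^a\bigl[(\E^b\A_{b,t}-\E^a\A_{b,t})^2\bigr]$. The predictable bracket is handled immediately by \eqref{ass2} together with $\sum_{[a,b]\in\pi_n}|b-a|=|t-s|$:
\[
\bigl\|\langle M^{(n)}\rangle\bigr\|_{L_{p/2}}\le\sum_{[a,b]\in\pi_n}C_3|b-a|\,|t-a|^{\eps_3}\le C_3|t-s|^{1+\eps_3},
\]
uniformly in $n$ and with no $C_1$; this produces the term $\sqrt p\,C_3^{1/2}|t-s|^{1/2+\eps_3/2}$ of \eqref{mainresult}. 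The correction $R_n=\sum_{[a,b]}\bigl((\E^b\A_{b,t}-\E^a\A_{b,t})^2-\E^a[\cdots]\bigr)$ is again a martingale difference sum; bounding it via a further Burkholder--Davis--Gundy step and the elementary inequality $(x-\E^a x)^2\le x^2+(\E^a x)^2$ (for $x\ge 0$) reduces it to a maximal term $\bigl\|\max_{[a,b]}|\E^b\A_{b,t}-\E^a\A_{b,t}|\bigr\|_{L_p}^2$ together with $\|\langle M^{(n)}\rangle\|_{L_{p/2}}$. The maximal term tends to $0$ as $n\to\infty$ thanks to \eqref{ass11} (combined, where needed, with the $L_{p\vee m}$-bound \eqref{ass1}): over the $2^n$ dyadic intervals its $p$-th moment sums like $2^n(|t-s|2^{-n})^{1+m\eps_1}\to 0$; and the $\langle M^{(n)}\rangle$-piece of $R_n$ carries a prefactor that is the maximum of $2^n$ conditional second moments, hence decays in $n$ as well. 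A short self-improving (absorb-the-$\Psi_n$) argument then gives $\limsup_n\bigl\|[M^{(n)}]\bigr\|_{L_{p/2}}\le N\,C_3|t-s|^{1+\eps_3}$, and therefore, using $M^{(n)}\to\A_{s,t}-\E^s\A_{s,t}$ in $L_p$ (or Fatou's lemma),
\[
\|\A_{s,t}-\E^s\A_{s,t}\|_{L_p}\le N\sqrt p\,C_3^{1/2}|t-s|^{1/2+\eps_3/2}.
\]
Adding the $C_2$-term from \eqref{ass1.1} yields \eqref{mainresult}, with $K$ universal because no estimate ever introduced $p$- or $C_j$-dependence into a prefactor.

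\emph{The main obstacle.} The delicate point is the correction term $R_n$: one must show not merely that it is finite but that it \emph{vanishes} as the partition is refined, so that no residual $C_1$ and no extra power of $p$ leaks into \eqref{mainresult}. This is exactly what forces the two slightly unusual hypotheses \eqref{ass1} (stated in the enlarged space $L_{p\vee m}$) and \eqref{ass11} (stated in $L_m$, with the sharper decay rate $|u-s|^{1/m+\eps_1}$): they are precisely what makes the maximal increments over the $2^n$ dyadic intervals summable after refinement. Verifying the martingale difference structure at each of the nested Burkholder--Davis--Gundy steps and choosing the correct moment and norm at each dyadic scale is where essentially all the work lies; by comparison the telescoping identity, the computation $\sum_{[a,b]}|b-a|=|t-s|$, and the final assembly are routine.
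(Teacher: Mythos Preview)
Your overall plan --- writing $\A_{s,t}-\E^s\A_{s,t}$ as a limit of discrete martingales $M^{(n)}=\sum_{[a,b]\in\pi_n}(\E^b\A_{b,t}-\E^a\A_{b,t})$, applying BDG, and arguing that in the limit the quadratic variation behaves like the predictable one --- is morally on the right track, but it misses the paper's key device and has a concrete gap.

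The paper does \emph{not} run a dyadic sewing argument. It introduces the \emph{continuous--time} martingale $M_r:=\E^r[\A_{s,t}]$, $r\in[s,t]$, and shows it has a continuous modification via Kolmogorov's criterion, using only the $L_m$-bounds \eqref{ass1} and \eqref{ass11}. Once $M$ is continuous, two things come for free: the BDG inequality for continuous martingales (Barlow--Yor) supplies the constant $\sqrt p$, and $[M]=\langle M\rangle$ identically, so there is no correction term $R_n$ at all. The bound on $\|\langle M\rangle_t\|_{L_{p/2}}$ is then obtained by Fatou on the discrete approximations $\sum_j\|\E^{r_{j-1}}(M_{r_j}-M_{r_{j-1}})^2\|_{L_{p/2}}$: after the decomposition $|M_u-M_v|\le|\A_{u,v}|+|\E^u\A_{u,v}|+|\E^u\A_{v,t}-\E^v\A_{v,t}|$, the first two contributions vanish by \eqref{ass1} and the third is exactly the left-hand side of \eqref{ass2}. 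No maximal increment in $L_p$ is ever needed; assumptions \eqref{ass1} and \eqref{ass11} serve only to establish \emph{pathwise} continuity.

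The gap in your route is precisely the control of $R_n=[M^{(n)}]-\langle M^{(n)}\rangle$. Your sketch reduces it to $\|\max_{[a,b]}|\E^b\A_{b,t}-\E^a\A_{b,t}|\|_{L_p}\to 0$. But \eqref{ass11} is stated in $L_m$ with exponent $1/m+\eps_1$ --- exactly what Kolmogorov needs in $L_m$, not what a union bound in $L_p$ needs. When $p>m$ you cannot upgrade this to $L_p$ using \eqref{ass1}, since the latter only yields the mesh-independent bound $\|\E^b\A_{b,t}-\E^a\A_{b,t}\|_{L_p}\le 2C_1|t-b|^{1/2+\eps_1}$; your summability computation ``$2^n(|t-s|2^{-n})^{1+m\eps_1}$'' is an $L_m$-calculation, not an $L_p$-one. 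Without the maximal term vanishing in $L_p$, the nested BDG on $R_n$ leaks a $p$- or $C_1$-dependent prefactor into the final estimate, which defeats the purpose. A related concern is your standing claim that the BDG constant for \emph{discrete} martingales is $O(\sqrt p)$: as the paper's remark after the theorem emphasizes, the $\sqrt p$ constant is what is special about \emph{continuous} martingales, and is precisely why the proof passes through continuity rather than staying discrete.
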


\begin{Remark}
Note that the right--hand side of bound \eqref{mainresult} does not depend on $C_1$.
\end{Remark}

\begin{Remark}
Let us recall that the proof of stochastic sewing lemma in \cite{Khoa} requires to apply the BDG inequality infinitely many times but each time to a discrete-time martingale, thus yielding a constant $p$ in the right--hand side of bound \eqref{SSL3 cA}. In our proof we apply the BDG inequality only once, but to a continuous time martingale. This allows to get a better constant (namely $\sqrt p$ instead of $p$), since the constant in the BDG inequality for the continuous-time martingales is better than in the BDG inequality for general martingales.
\end{Remark}

\begin{proof}[Proof of Theorem~\ref{thm:Sewing-lemma-better-p}]
This proof is inspired by the ideas of \cite[proof of Proposition 3.2]{BM} and \cite[proof of Theorem~4.3]{Cat-Gub}. For the sake of brevity, in this proof we will write $L_p$ for
$L_p(\Omega)$. Fix $s,t\in[S,T]_{\le}$ and for $i\in\{1,\hdots,d\}$ consider a martingale $M^{i}=(M^i_r)_{r\in[s,t]}$, where
$$
M^i_r:=\E^r[\A^i_{s,t}],\quad r\in[s,t].
$$

We will frequently use the following inequality.  For $s\le u\le v\le t $ one has
\begin{equation}\label{usefulrepresentation}
|M^i_u-M^i_v|\le |\A^i_{u,v}|+|\E^u \A^i_{u,v}|+|\E^u\A^i_{v,t}-\E^v\A^i_{v,t}|.
\end{equation}
We begin by observing that
\begin{align}\label{step0}
\|\A_{s,t}\|_{L_p(\Omega)}&\le\sum_{i=1}^d\|\A^{i}_{s,t}\|_{L_p(\Omega)}=\sum_{i=1}^d\|M^{i}_{t}\|_{L_p(\Omega)}\nonumber\\
&\le \sum_{i=1}^d\|M^{i}_{s}\|_{L_p(\Omega)}+\sum_{i=1}^d\|M^{i}_{t}-M^{i}_{s}\|_{L_p(\Omega)}\nonumber\\
&=:\sum_{i=1}^dI_1^{i}+\sum_{i=1}^dI_2^{i}.
\end{align}
The first term in \eqref{step0} is easy to bound. By assumption \eqref{ass1.1} we have
\begin{equation}\label{step0.1}
I_1^{i}=\|\E^{s} \A_{s,t}^{i}\|_{L_p(\Omega)}\le C_2 |t-s|^{\eps_2}.
\end{equation}
To estimate $I_2^i$ we first observe that for each $i=1,\dots,d$ the martingale $M^{i}$ is continuous. Indeed, for any $s\le u\le v\le t$ we have using \eqref{usefulrepresentation}, \eqref{ass1}, and \eqref{ass11}
\begin{align*}
\|M^{i}_u-M^{i}_v\|_{L_m}&\le 2\|\A^{i}_{u,v}\|_{L_m}+\|\E^u\A^{i}_{v,t}-\E^v\A^{i}_{v,t}\|_{L_m}\\
&\le 3C_1|u-v|^{1/m+\eps_1}.\nonumber
\end{align*}
Therefore, the Kolmogorov continuity theorem implies that the martingale $M^{i}$ is continuous. Hence, its quadratic variation $[M^{i}]$ equals its predictable quadratic variation
$\langle M^{i}\rangle$ \cite[Theorem~I.4.52]{Shirya}. Thus, applying a version of the Burkholder--Davis--Gundy inequality with a precise bound on the constant \cite[Proposition 4.2]{Barlya}, we get that there exists a universal constant $N>0$ such that
\begin{equation}\label{martingaleestimate}
\|M^{i}_t-M^{i}_s\|_{L_p(\Omega)}\le N\sqrt p\,\|\langle M^{i}\rangle_t\|_{L_{p/2}}^{1/2}.
\end{equation}
For $n\in\N$, $j\in\{1,\hdots,n\}$ put $t^n_j:=s+(t-s)j/n$. Then, it follows from
 \cite[Theorem~2]{Jakub} that $\sum_{j=0}^{n-1}\E^{t_j^n}[(M^i_{t^n_{j+1}}-M^i_{t^n_j})^2]$
converges to $\langle M^{i}\rangle_t$ in $L_1(\Omega)$. In particular, a subsequence indexed over $n_k$ converges almost surely. Therefore, applying  Fatou's lemma,  Minkowski's inequality,  \eqref{usefulrepresentation} and using the assumptions of the theorem, we deduce
\begin{align*}
\|\langle M^{i}\rangle_t\|_{L_{p/2}}&=\Bigl\|\lim_{k\to\infty}\sum_{j=0}^{n_k-1}\E^{t_j^{n_k}}(M^{i}_{t^{n_k}_{j+1}}-M^{i}_{t^{n_k}_j})^2\Bigr\|_{L_{p/2}}
\\
&\le \liminf_{k \to\infty}\sum_{j=0}^{n_k-1}\bigl\|\E^{t_j^{n_k}}(M^{i}_{t^{n_k}_{j+1}}-M^{i}_{t^{n_k}_j})^2\bigr\|_{L_{p/2}}\\
&\le 3\lim_{k\to\infty}\sum_{j=0}^{n_k-1}\bigl(2\|\A^{i}_{t^{n_k}_{j},t^{n_k}_{j+1}}\|_{L_p(\Omega)}^2+\|\E^{t_j^{n_k}}(\E^{t_j^{n_k}}\A^{i}_{t^{n_k}_{j+1},t}-\E^{t_{j+1}^{n_k}}\A^{i}_{t^{n_k}_{j+1},t})^2\bigr\|_{L_{p/2}}\bigr)\\
&\le \lim_{k\to\infty}6C_1^2T^{1+2\eps_1}n_k^{-2\eps_1}+3\lim_{k\to\infty}C_3|t-s|^{1+\eps_3}n_k^{-1-\eps_3}\sum_{j=0}^{n_k-1}(n_k-j)^{\eps_3}\\
&\le N C_3|t-s|^{1+\eps_3}.
\end{align*}
Substituting this into \eqref{martingaleestimate} and combining this with \eqref{step0} and \eqref{step0.1}, we obtain \eqref{mainresult}.
\end{proof}

\subsection{Some useful estimates}\label{sec:prelim}

In this section we establish a number of useful technical bounds related to Gaussian kernels. Their proofs are mostly standard, however we were not able to find them in the literature. Therefore for the sake of completeness, we provide the proofs of these results in the Appendix~\ref{A:AAAAA}.

Fix an arbitrary $H\in(0,1)$. Define
\begin{equ}
c(s,t):=\sqrt{(2H)^{-1}}|t-s|^{H},\quad 0\le  s\le  t\le 1.
\end{equ}
Let $p_t$, $t>0$, be the density of a $d$-dimensional vector with independent Gaussian components each of mean zero and variance $t$:
\begin{equ}\label{eq:p-def}
p_t(x)=\frac{1}{(2\pi t)^{d/2}}\exp\Bigl(-\frac{|x|^2}{2t}\Bigr),\quad x\in\R^d.
\end{equ}
For a measurable function $f\colon\R^d\to\R$ we write $\cP_t f:=p_t\ast f$, and occasionally we denote by $p_0$ the Dirac delta function.

Our first statement provides a number of technical bounds related to the fractional Brownian motion. Its proof is placed in the Appendix~\ref{A:AAAAA}.
\begin{proposition}\label{prop:fractional}
Let $p\ge1$. The process $B^H$ has the following properties:
\begin{enumerate}[$($i$)$]
\item\label{prop 1} $\|B^H_t-B^H_s\|_{L_p(\Omega)}= N |t-s|^H$, for all $0\leq s\leq t\leq 1$, with $N=N(p,d,H)$;
\item\label{prop H} for all $0\le s\le u\le t\leq 1$, $i=1,\hdots,d$, the random variable $\E^sB_t^{H,i}-\E^uB_t^{H,i}$ is independent of $\F^s$; furthermore, this random variable is Gaussian with mean $0$ and variance
\begin{equation}\label{funcitonv}
\E(\E^sB_t^{H,i}-\E^uB_t^{H,i})^2= c^2(s,t)-c^2(u,t)=:v(s,u,t);
\end{equation}
\item\label{prop 2} $\E^s f(B^H_t)=\cP_{c^2(s,t)}f(\E^sB^H_t)$, for all $0\leq s\leq t\leq 1$;
\item\label{prop 3} $|c^2(s,t)-c^2(s,u)|\leq N|t-u||t-s|^{2H-1}$,
for all $0\leq s\leq u\leq t$ such that $|t-u|\leq |u-s|$, with $N=N(H)$;
\item\label{prop 5} $\|\E^sB^H_t-\E^sB^H_u\|_{L_p(\Omega)}\leq N|t-u||t-s|^{H-1}$, for all $0\leq s\leq u\leq t$ such that $|t-u|\le |u-s|$, with $N=N(p,d,H)$;
\end{enumerate}
\end{proposition}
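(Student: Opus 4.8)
The unifying idea is to express each scalar component of $B^H$ through the Mandelbrot--van Ness kernel of \eqref{eq:Mandelbrot} as a Wiener integral against the driving Brownian motion $W$, and to isolate the part of this integral that is $\F_s$--measurable. Since $\bF$ is generated by the increments of $W$, for $0\le s\le t$ the contribution $\int_s^t(t-r)^{H-1/2}\,dW^i_r$ is centered and independent of $\F_s$, whence
\begin{equ}\label{plan:condexp}
\E^sB^{H,i}_t=\int_{-\infty}^0\bigl(|t-r|^{H-1/2}-|r|^{H-1/2}\bigr)\,dW^i_r+\int_0^s(t-r)^{H-1/2}\,dW^i_r .
\end{equ}
Items \ref{prop H} and \ref{prop 2} fall out of \eqref{plan:condexp}. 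Subtracting the same identity at level $u$ gives $\E^sB^{H,i}_t-\E^uB^{H,i}_t=-\int_s^u(t-r)^{H-1/2}\,dW^i_r$, a functional of the increments of $W$ on $[s,u]$ and hence independent of $\F_s$, Gaussian and centered, with variance $\int_s^u(t-r)^{2H-1}\,dr=(2H)^{-1}\bigl((t-s)^{2H}-(t-u)^{2H}\bigr)=c^2(s,t)-c^2(u,t)=v(s,u,t)$; this proves \ref{prop H}. For \ref{prop 2}, \eqref{plan:condexp} identifies $G:=B^H_t-\E^sB^H_t$ (componentwise $\int_s^t(t-r)^{H-1/2}\,dW^i_r$) as an $\R^d$--valued centered Gaussian vector independent of $\F_s$ with covariance $c^2(s,t)I$, since the $W^i$ are independent and $\int_s^t(t-r)^{2H-1}\,dr=c^2(s,t)$; as $\E^sB^H_t$ is $\F_s$--measurable, the freezing lemma gives $\E^sf(B^H_t)=\E^sf(\E^sB^H_t+G)=\cP_{c^2(s,t)}f(\E^sB^H_t)$.

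Item \ref{prop 1} follows from the covariance function \eqref{covfunct}: $\E|B^{H,i}_t-B^{H,i}_s|^2=C(t,t)+C(s,s)-2C(s,t)=c_H|t-s|^{2H}$, so $B^H_t-B^H_s$ is a centered Gaussian vector with covariance $c_H|t-s|^{2H}I$, and the stated identity for its Euclidean norm in $L_p(\Omega)$ is just the scaling of Gaussian moments, with $N=N(p,d,H)$. For \ref{prop 3}, write $c^2(s,t)-c^2(s,u)=(2H)^{-1}\bigl((t-s)^{2H}-(u-s)^{2H}\bigr)=\int_u^t(r-s)^{2H-1}\,dr$; the hypothesis $|t-u|\le|u-s|$ forces $t-s\le2(u-s)$, so that $(t-s)/2\le u-s\le r-s\le t-s$ on $[u,t]$, hence $(r-s)^{2H-1}\le2^{|2H-1|}(t-s)^{2H-1}$ there and the integral is at most $2^{|2H-1|}(t-s)^{2H-1}|t-u|$.

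For \ref{prop 5} I invoke \eqref{plan:condexp} once more: for $0\le s\le u\le t$ the $|r|^{H-1/2}$ terms cancel and all bases stay positive on $(-\infty,s]$, so
\begin{equ}
\E^sB^{H,i}_t-\E^sB^{H,i}_u=\int_{-\infty}^s\bigl((t-r)^{H-1/2}-(u-r)^{H-1/2}\bigr)\,dW^i_r ,
\end{equ}
a centered Gaussian vector whose $L_p(\Omega)$ norm is bounded by $N(p,d,H)$ times the square root of $\int_{-\infty}^s\bigl((t-r)^{H-1/2}-(u-r)^{H-1/2}\bigr)^2\,dr$. Substituting $w=s-r$ and writing $a:=t-s\ge b:=u-s>0$, this integral equals $\int_0^\infty\bigl((a+w)^{H-1/2}-(b+w)^{H-1/2}\bigr)^2\,dw$; since $\partial_c(c+w)^{H-1/2}=(H-1/2)(c+w)^{H-3/2}$ and $H-3/2<0$, the integrand is bounded by $(H-1/2)^2(a-b)^2(b+w)^{2H-3}$, and because $2H-3<-1$ the improper integral converges to at most $\tfrac{(H-1/2)^2}{2-2H}(a-b)^2b^{2H-2}$. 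Using $b=u-s\ge(t-s)/2$ and $2H-2<0$, one has $b^{2H-2}\le2^{2-2H}(t-s)^{2H-2}$, so the whole expression is $\le N(H)(t-u)^2(t-s)^{2H-2}$; taking square roots yields \ref{prop 5}.

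I do not expect a genuine difficulty: once \eqref{plan:condexp} is in hand, everything reduces to bookkeeping with Wiener integrals together with elementary estimates on power functions. The only points that need care are the identification \eqref{plan:condexp} itself --- in particular, checking that the $r<0$ part of the Mandelbrot--van Ness kernel is $\F_s$--measurable and cancels in the differences relevant to \ref{prop H} and \ref{prop 5} --- and, in \ref{prop 3} and \ref{prop 5}, the correct use of the hypothesis $|t-u|\le|u-s|$ (that is, $u-s\ge(t-s)/2$) to turn powers of $u-s$ into powers of $t-s$; the improper integral in \ref{prop 5} converges for free, since $2H-3<-1$.
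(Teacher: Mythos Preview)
Your proof is correct and follows essentially the same route as the paper: both derive the conditional expectation formula \eqref{plan:condexp} from the Mandelbrot--van Ness representation, read off \ref{prop H} and \ref{prop 2} from it, handle \ref{prop 1} via the covariance \eqref{covfunct}, and treat \ref{prop 3} and \ref{prop 5} by writing the relevant differences as integrals of power functions and using $u-s\ge(t-s)/2$. The only cosmetic difference is that the paper phrases the pointwise bound in \ref{prop 5} as $\big(\int_u^t|v-r|^{H-3/2}\,dv\big)^2\le|t-u|^2|u-r|^{2H-3}$ rather than via the mean value theorem, which is the same estimate.
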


The next statement gives the  heat kernel bounds which are necessary for the proofs of the main results. Its proof is also placed in the Appendix~\ref{A:AAAAA}. Recall the definition of the function $v$ in \eqref{funcitonv}.
\begin{proposition}\label{prop:HK}
Let $f\in \C^\alpha$, $\alpha\le 1$ and $\beta \in[0,1]$. The following  hold:
\begin{enumerate}[$($i$)$]
\item\label{eq:HK bound2}
There exists $N=N(d, \alpha, \beta)$ such that 
$$
\|\cP_tf\|_{\C^\beta(\R^d)}\le N t^{\frac{(\alpha-\beta)\wedge0}{2}} \|f\|_{\C^\alpha(\R^d)},
$$
for all $t\in(0,1]$.
\item \label{eq:HK bound}
For all $\delta \in (0,1]$ with $\delta\ge\frac\alpha2-\frac\beta2$, there exists $N=N(d, \alpha, \beta, \delta)$ such that 
$$
\|\cP_tf-\cP_sf\|_{\C^\beta(\R^d)}\leq N  \|f\|_{\C^{\alpha}(\R^d)} s^{\frac\alpha2-\frac\beta2-\delta}(t-s)^{\delta},
$$
for all $0\le s\leq t \le 1$.

\item \label{eq:diffnewbound}
For all  $H\in(0,1)$, there exists $N=N(d,\alpha,\beta, H)$ such that 
$$
\|\cP_{c^2(s,t)}f-\cP_{c^2(u,t)}f\|_{\C^\beta(\R^d)}\leq  N\|f\|_{\C^\alpha(\R^d)}(u-s)^{\frac12}(t-u)^{(H(\alpha-\beta)-\frac12)\wedge0},
$$
for all $0<s\leq u \le t\leq 1$.
\item\label{randomresult}
For all  $H\in(0,1)$, $p\ge2$,  there exists $N=N(d,\alpha,H,p)$ such that
$$
\|\cP_{c^2(u,t)}f(x)-\cP_{c^2(u,t)}f(x+\xi)\|_{L_p(\Omega)}
\le N\|f\|_{\C^\alpha} (u-s)^{\frac12}(t-u)^{(H\alpha-\frac12)\wedge0};
$$
for all $x\in\R^d$,  $0<s\le u\le t\le 1$ and all random vectors $\xi$ whose components are independent, $\mathcal{N}(0,v(s,u,t))$ random variables.
\end{enumerate}
\end{proposition}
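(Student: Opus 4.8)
The plan is to deduce the estimate from the heat-kernel smoothing bound in Proposition~\ref{prop:HK}(i), a Gaussian moment computation, and an elementary monotonicity bound for the variance $v(s,u,t)$ from \eqref{funcitonv}. One may assume $s<u<t$, the degenerate cases being immediate. Throughout, abbreviate $a:=c^2(u,t)$, $v:=v(s,u,t)$ and $g:=\cP_a f$, so that the quantity to be estimated is $\|g(x)-g(x+\xi)\|_{L_p(\Omega)}$.

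\textbf{Step 1 (a scale of pointwise bounds).} For every $\gamma\in[\alpha,1]$, Proposition~\ref{prop:HK}(i) with $\beta=\gamma$ (so $(\alpha-\gamma)\wedge0=\alpha-\gamma$; for $\gamma=1$ this amounts to the gradient estimate $\|\nabla\cP_a f\|_{\C^0}\le Na^{(\alpha-1)/2}\|f\|_{\C^\alpha}$) gives
\[
|g(x)-g(x+h)|\le N a^{(\alpha-\gamma)/2}\|f\|_{\C^\alpha}\,|h|^{\gamma},\qquad h\in\R^d,
\]
with the convention that if $\alpha=0$ and $\gamma=\alpha$ this is replaced by the trivial bound $|g(x)-g(x+h)|\le 2\|f\|_{\C^0}$. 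Substituting $h=\xi$, taking $L_p(\Omega)$-norms, and using that the components of $\xi$ are independent centred Gaussians of variance $v$, so that $\big(\E|\xi|^{p\gamma}\big)^{1/p}\le N v^{\gamma/2}$, I obtain
\[
\|g(x)-g(x+\xi)\|_{L_p(\Omega)}\le N\|f\|_{\C^\alpha}\,a^{(\alpha-\gamma)/2}v^{\gamma/2}\qquad\text{for every }\gamma\in[\alpha,1],
\]
with $N=N(d,\alpha,H,p,\gamma)$; below only $\gamma\in\{\alpha,1\}$ is used.

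\textbf{Step 2 (variance bound and conclusion).} From \eqref{funcitonv}, $v(s,u,t)=(2H)^{-1}\big((t-s)^{2H}-(t-u)^{2H}\big)=\int_{t-u}^{t-s}r^{2H-1}\,dr$; since $t-s=(t-u)+(u-s)$ and the integrand is monotone, bounding it by its maximum over $[t-u,t-s]$ and using $t-s\le2(t-u)$ in the first case, respectively $t-s\le2(u-s)$ in the second, gives
\[
v(s,u,t)\le N(H)\begin{cases}(t-u)^{2H-1}(u-s), & u-s\le t-u,\\[1mm] (u-s)^{2H}, & u-s> t-u.\end{cases}
\]
If $u-s\le t-u$, I take $\gamma=1$: then $a^{(\alpha-1)/2}v^{1/2}\le N(t-u)^{H(\alpha-1)}\cdot(t-u)^{H-1/2}(u-s)^{1/2}=N(t-u)^{H\alpha-1/2}(u-s)^{1/2}$, and since $t-u\le1$ this is at most $N(t-u)^{(H\alpha-1/2)\wedge0}(u-s)^{1/2}$, the claimed bound. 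If $u-s> t-u$, I take $\gamma=\alpha$ (the trivial bound if $\alpha=0$): then $v^{\alpha/2}\le N(u-s)^{H\alpha}$, which is $\le N(u-s)^{1/2}$ when $H\alpha\ge1/2$ (as $u-s\le1$), while when $H\alpha<1/2$ one has $(u-s)^{H\alpha}=(u-s)^{1/2}(u-s)^{H\alpha-1/2}\le(u-s)^{1/2}(t-u)^{H\alpha-1/2}$ since $t-u<u-s$ and $H\alpha-1/2<0$; in both subcases this is again the claimed bound.

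\textbf{Expected main obstacle.} The only step that is not bookkeeping is the choice of the H\"older exponent $\gamma$ in Step 2: one uses $\gamma=1$ when $u-s\le t-u$ and $\gamma=\alpha$ otherwise, which is exactly the choice that balances the smoothing scale $\sqrt a\sim(t-u)^H$ of $\cP_a$ against the magnitude $\sqrt v$ of the Gaussian perturbation $\xi$. Once this is fixed, the remaining work is to check that every power of $t-u$ or $u-s$ discarded via $t-u\le1$ or $u-s\le1$ carries the correct sign. The pointwise bound, the Gaussian moment estimate, and the integral bound on $v$ are all routine.
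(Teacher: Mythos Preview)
Your argument is correct for $\alpha\ge0$ and takes a genuinely different route from the paper. The paper splits on $H$: for $H\le1/2$ it uses only the Lipschitz bound (your $\gamma=1$), since $v\le N(u-s)(t-u)^{2H-1}$ holds unconditionally in that regime; for $H>1/2$ it decomposes $\xi=\eta+\rho$ into independent Gaussians with $\Var(\eta^i)=(u-s)(t-u)^{2H-1}$ and $\Var(\rho^i)\le c^2(s,u)$, applying $\beta=1$ to the $\eta$-increment and $\beta=\tfrac{1}{2H}\vee\alpha$ to the $\rho$-increment. You instead split on whether $u-s\le t-u$ or not and avoid the Gaussian decomposition entirely, which is more elementary.

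There is, however, a gap when $\alpha<0$. This case is in scope: the preamble only requires $\alpha\le1$, and part~(iv) is invoked in Lemma~\ref{L:Daviebound} precisely with $\alpha\in(-1/(2H),0]$. In your second case you take $\gamma=\alpha$, but for $\alpha<0$ neither the increment bound $|g(x)-g(x+h)|\le N|h|^\alpha$ nor the implication ``$v\le N(u-s)^{2H}\Rightarrow v^{\alpha/2}\le N(u-s)^{H\alpha}$'' is valid (the latter inequality reverses since the exponent is negative). The fix is immediate within your framework: for $\alpha\le0$ take $\gamma=0$, using part~(i) with $\beta=0$ to get $|g(x)-g(x+h)|\le2\|\cP_a f\|_{\C^0}\le Na^{\alpha/2}\|f\|_{\C^\alpha}=N(t-u)^{H\alpha}\|f\|_{\C^\alpha}$; then in the case $u-s>t-u$ one has $(t-u)^{H\alpha}=(t-u)^{1/2}(t-u)^{H\alpha-1/2}\le(u-s)^{1/2}(t-u)^{H\alpha-1/2}$ as required. (A minor aside: in your variance justification for the second case when $H\le1/2$, bounding the integrand by its maximum gives $(t-u)^{2H-1}(u-s)$, not $(u-s)^{2H}$; to obtain the latter simply use $v\le(2H)^{-1}(t-s)^{2H}\le N(u-s)^{2H}$ directly.)
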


Our next statement relates to the properties of H\"older norms. Its proof can be found in Appendix \ref{A:AAAAA}.
\begin{Proposition}\label{P:Holdernorms}
Let $\alpha\in\R$, $f\in\C^\alpha(\R^d,\R^k)$, $\delta\in[0,1]$. Then there  exists $N=N(\alpha,\delta,d , k)$ such that for any $x\in\R^d$
\begin{equation*}
\|f(x+\cdot)-f(\cdot)\|_{\C^{\alpha-\delta}}\le N|x|^\delta \|f\|_{\C^\alpha}.
\end{equation*}
\end{Proposition}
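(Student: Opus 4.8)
The plan is to obtain the bound by interpolating between the endpoint exponents $\delta=0$ and $\delta=1$. One reduces immediately to $|x|\le 1$: for $|x|>1$ the claim follows from the continuous embedding $\C^\alpha\hookrightarrow\C^{\alpha-\delta}$ together with $\|f(x+\cdot)-f(\cdot)\|_{\C^\alpha}\le 2\|f\|_{\C^\alpha}\le 2|x|^\delta\|f\|_{\C^\alpha}$ (translation invariance of the norm), and this last inequality is also precisely the $\delta=0$ endpoint. The heart of the matter is the gain-of-one-derivative estimate (the $\delta=1$ endpoint)
\begin{equ}\label{eq:PH-endpoint}
\|f(x+\cdot)-f(\cdot)\|_{\C^{\alpha-1}}\le N|x|\,\|f\|_{\C^\alpha};
\end{equ}
granting it, the general $\delta\in(0,1)$ follows by the interpolation inequality $\|g\|_{\C^{\alpha-\delta}}\le N\|g\|_{\C^\alpha}^{1-\delta}\|g\|_{\C^{\alpha-1}}^{\delta}$ for the scale $(\C^\gamma)_{\gamma\in\R}$ (a standard fact; an alternative bypassing it is indicated below), applied to $g=f(x+\cdot)-f(\cdot)$ together with the two endpoint bounds.

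For \eqref{eq:PH-endpoint} I would distinguish $\alpha>1$ from $\alpha<1$. When $\alpha>1$, $f\in C^1$ and $f(x+y)-f(y)=\int_0^1 x\cdot\nabla f(y+\theta x)\,d\theta$; differentiating under the integral sign, each derivative of order $|\ell|<\alpha-1$ of the left-hand side equals $\int_0^1 x\cdot\d^\ell\nabla f(y+\theta x)\,d\theta$, and since $\d^\ell\nabla f=\d^{\ell'}f$ with $|\ell'|=|\ell|+1<\alpha$, every term of the $\C^{\alpha-1}$ norm of $f(x+\cdot)-f(\cdot)$ is $\le|x|$ times a term of $\|f\|_{\C^\alpha}$. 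When $\alpha<1$ (so $\alpha-1<0$ and the heat-kernel description of $\C^{\alpha-1}$ is in force) I would use that $\cP_\eps$ commutes with translations, so that $\cP_\eps\bigl(f(x+\cdot)-f(\cdot)\bigr)=(\cP_\eps f)(x+\cdot)-(\cP_\eps f)(\cdot)$ and hence, by the mean value theorem, $\|\cP_\eps(f(x+\cdot)-f(\cdot))\|_{\C^0}\le|x|\,\|\nabla\cP_\eps f\|_{\C^0}$. A short computation in the spirit of the proof of Proposition~\ref{prop:HK} — writing $\nabla p_\eps=(\nabla p_{\eps/2})\ast p_{\eps/2}$ when $\alpha\le 0$, and using $\int\nabla p_\eps=0$ with the $\alpha$-Hölder continuity of $f$ when $\alpha\in(0,1)$ — yields $\|\nabla\cP_\eps f\|_{\C^0}\le N\eps^{(\alpha-1)/2}\|f\|_{\C^\alpha}$ for all $\eps\in(0,1]$; multiplying by $\eps^{-(\alpha-1)/2}$ and taking the supremum over $\eps\in(0,1]$ in the definition of $\|\cdot\|_{\C^{\alpha-1}}$ gives \eqref{eq:PH-endpoint}.

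A route avoiding the interpolation inequality (at least for $\alpha\le1$) is to run the geometric-mean device directly: combine the two bounds $\|\cP_\eps(f(x+\cdot)-f(\cdot))\|_{\C^0}\le 2\|\cP_\eps f\|_{\C^0}$ and $\le|x|\|\nabla\cP_\eps f\|_{\C^0}$ with weights $1-\delta$ and $\delta$, insert the heat-kernel bounds for the two factors, and verify that the net power of $\eps$ matches the one in the definition of $\|\cdot\|_{\C^{\alpha-\delta}}$ when $\alpha-\delta<0$; when $\alpha-\delta\ge0$ one does the same averaging on the difference quotients $|f(x+y)-f(y)-f(x+z)+f(z)|\le 2[f]_{\C^\alpha}\min(|y-z|^\alpha,|x|^\alpha)$. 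I expect the only real friction in the write-up to be organizational rather than conceptual: the heat-kernel estimates of Proposition~\ref{prop:HK} are stated for $\alpha\le 1$, so $\alpha>1$ must be handled separately via the calculus identity above, and the borderline cases where $\alpha$ or $\alpha-\delta$ is a nonnegative integer are read with the convention of the statement (at $\alpha=1$, say, the integral identity degenerates to a direct Lipschitz bound). Everything else is routine, in line with the text's remark that the proof is ``mostly standard''.
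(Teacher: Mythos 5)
Your proposal is correct in substance, but your primary route is organized differently from the paper's. The paper does not pass through an abstract interpolation inequality on the scale $(\C^\gamma)_{\gamma\in\R}$; instead it argues case by case on the sign of $\alpha-\delta$ and, in each case, takes a weighted geometric mean of two \emph{elementary} bounds with an explicitly chosen weight $\beta$: for $\alpha-\delta\in(0,1]$ it combines $|f(y+x)-f(y)-f(z+x)+f(z)|\le 2|x|^\alpha[f]_{\C^\alpha}$ with the same quantity bounded by $2|y-z|^\alpha[f]_{\C^\alpha}$ (choosing $\beta=\delta/\alpha$), for $\alpha\in(1,2]$ it first applies the fundamental theorem of calculus and then the same device, and for $\alpha-\delta<0$ it uses $\|\cP_\eps f(x+\cdot)-\cP_\eps f(\cdot)\|_{\C^0}\le|x|^\delta\|\cP_\eps f\|_{\C^\delta}$ followed by Proposition~\ref{prop:HK}~(i). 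This is exactly the ``alternative route'' you sketch at the end, so that part of your proposal essentially reproduces the paper's argument. Your main route --- prove the $\delta=1$ endpoint $\|f(x+\cdot)-f(\cdot)\|_{\C^{\alpha-1}}\le N|x|\|f\|_{\C^\alpha}$ and then interpolate --- is a genuinely different and somewhat cleaner organization; its cost is that it outsources the work to the inequality $\|g\|_{\C^{\alpha-\delta}}\le N\|g\|_{\C^\alpha}^{1-\delta}\|g\|_{\C^{\alpha-1}}^{\delta}$, which you leave unproved. That inequality is standard for the Besov--H\"older--Zygmund scale $B^s_{\infty,\infty}$ (e.g.\ via Littlewood--Paley blocks), but note that it cannot be established by only combining the two bounds $\|\cP_\eps g\|_{\C^0}\le\|g\|_{\C^0}$ and $\|\cP_\eps g\|_{\C^0}\le\eps^{(\alpha-1)/2}\|g\|_{\C^{\alpha-1}}$ when $\alpha>0>\alpha-\delta$ (the resulting weight on the low norm comes out as $(\delta-\alpha)/(1-\alpha)$ rather than $\delta$, giving the wrong power of $|x|$), so it is a real input and not a one-line consequence of the tools already in the paper. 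Moreover, in the paper's convention the spaces $\C^k$ at nonnegative integers $k$ are strictly smaller than $B^k_{\infty,\infty}$, so the interpolation inequality as stated needs a separate (easy, direct) argument precisely when $\alpha-\delta$ is a nonnegative integer --- you flag these borderline cases, and they do admit the direct bounds you indicate, so this is a matter of care rather than a gap. In short: correct, with your secondary route matching the paper and your primary route trading the paper's self-contained elementary computations for a citation to standard interpolation theory.
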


Finally, we will also need the following integral bounds. They follow immediately
from a direct calculation.
\begin{Proposition}
\begin{enumerate}[$($i$)$]
 \item Let $a,b>-1$, $t>0$. Then for some $N=N(a,b)$ one has
\begin{equation}
\int_0^t(t-r)^ar^b\,dr=N t^{a+b+1}.\label{eq:integral}
\end{equation}
\item Let $a>-2$, $b<1$, $t>0$. Then for some $N=N(a,b)$ one has
\begin{equation}
\Big|\int_0^t(t-r)^{a}(t^{b}r^{-b}-1)\,dr\Big|=N t^{a+1}.\label{eq:integral2}
\end{equation}
\end{enumerate}
\end{Proposition}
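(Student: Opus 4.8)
The plan is to remove the dependence on $t$ by the rescaling $r=ts$ in both integrals, reducing each claim to the absolute convergence of a single fixed integral over $[0,1]$ under the stated ranges of $a$ and $b$.

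For (i), the substitution $r=ts$, $dr=t\,ds$, gives $\int_0^t(t-r)^ar^b\,dr=t^{a+b+1}\int_0^1(1-s)^as^b\,ds$, and the remaining integral is the Euler Beta integral $B(a+1,b+1)$, which is finite precisely because $a>-1$ and $b>-1$; its value is the constant $N=N(a,b)$. That settles (i).

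For (ii), the same substitution yields $\int_0^t(t-r)^a(t^br^{-b}-1)\,dr=t^{a+1}\int_0^1(1-s)^a(s^{-b}-1)\,ds$, so it only remains to check that $J:=\int_0^1(1-s)^a(s^{-b}-1)\,ds$ is absolutely convergent; then $N:=|J|$ works (note $N$ is allowed to be $0$). The key point — and the only place the hypotheses $a>-2$ (rather than the naive $a>-1$) and $b<1$ are really used — is that one must \emph{not} split $J$ into $\int_0^1(1-s)^as^{-b}\,ds$ and $\int_0^1(1-s)^a\,ds$, since for $a\in(-2,-1]$ neither summand is integrable near $s=1$; the cancellation is essential. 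Instead I would bound $|J|$ on $[0,1/2]$ and on $[1/2,1]$ separately: on $[0,1/2]$ the factor $(1-s)^a$ is bounded while $|s^{-b}-1|\le s^{-b}+1$ is integrable near $0$ exactly when $b<1$; on $[1/2,1]$ one writes $|s^{-b}-1|=\bigl|\int_1^s(-b)u^{-b-1}\,du\bigr|\le C(b)(1-s)$, so the integrand is dominated by $C(b)(1-s)^{a+1}$, which is integrable near $1$ exactly when $a>-2$. Adding the two contributions gives $|J|<\infty$ and hence (ii).

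The argument is otherwise entirely routine; the single subtlety to be careful about is the non-splitting observation in (ii), i.e. that one exploits that $s^{-b}-1$ vanishes to first order at $s=1$ in order to compensate for the possible non-integrability of $(1-s)^a$ there when $a\le -1$.
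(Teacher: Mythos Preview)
Your proof is correct and is precisely the ``direct calculation'' the paper alludes to (the paper gives no further details beyond that phrase). The only thing worth adding is that your observation about the cancellation at $s=1$ in (ii)---that splitting $J$ is forbidden for $a\le -1$---is exactly the point that justifies the hypothesis $a>-2$; the paper leaves this implicit.
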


\subsection{Girsanov theorem for fractional Brownian  motion}\label{S:Girsan}

One of the tools which are important for the proof of Theorem~\ref{thm:main fractional} is the Girsanov theorem for fractional Brownian motion \cite[Theorem 4.9]{Ustunel}, \cite[Theorem~2]{Nualart}. We will frequently use the following technical corollary of this theorem. For the convenience of the reader we put its proof into Appendix~\ref{B:BBBBB}.

\begin{Proposition}\label{Pr:Girsanov}
Let $u\colon \Omega\times[0,1]\to\R^d$ be an  $\bF$--adapted process such that with a constant $M>0$ we have  
\begin{equation}\label{globalboundu}
\| u \|_{L_\infty(0,1)} \le M, 
\end{equation}
almost surely. Further, assume that one of the following holds:
\begin{enumerate}
\item[$($i$)$] $H\le 1/2$;
\end{enumerate}
or
\begin{enumerate}
\item[\noindent$($ii$)$] $H>1/2$ and there exists a random variable $\xi$ such that
\begin{equation}\label{intassump}
\int_0^1\Bigl(\int_0^t\frac{(t/s)^{H-1/2}|u_t-u_s|}{(t-s)^{H+1/2}}\,ds\Bigr)^2\,dt\le \xi
\end{equation}
and  $\E \exp(\lambda\xi)<\infty$ for any $\lambda>0$.
\end{enumerate}
Then there exists a probability measure $\wt \P$ which is equivalent to $\P$ such that the process $\wt B^H:=B^H+\int_0^\cdot u_s\,ds$ is a fractional Brownain motion with Hurst parameter $H$ under $\wt \P$. Furthermore for any $\lambda>0$ we have
\begin{equation}\label{densitybound}
\E\Bigl(\frac{d \P}{d\wt \P}\Bigr)^\lambda\le 
\begin{cases}
\exp(\lambda^2 NM^2)\qquad\qquad\qquad\qquad\text{if }H\in (0,1/2]\\
\exp(\lambda^2 NM^2)\E[\exp(\lambda N\xi)]\qquad \text{if }H\in(1/2,1)
\end{cases}
<\infty,
\end{equation}
where $N=N(H)$.
\end{Proposition}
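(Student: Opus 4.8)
The statement to prove is Proposition~\ref{Pr:Girsanov}, the Girsanov-type corollary for fractional Brownian motion. The strategy is to reduce everything to the known Girsanov theorem for fBm (as in \cite[Theorem 4.9]{Ustunel}, \cite[Theorem~2]{Nualart}), which is phrased in terms of the operator that maps the shift $\int_0^\cdot u_s\,ds$ to a shift of the underlying Wiener process via the fractional integration/differentiation kernels. Concretely, recall that the Mandelbrot--van Ness representation \eqref{eq:Mandelbrot} realizes $B^H$ as $K_H W$ for a Volterra kernel $K_H$; a drift $\int_0^\cdot u_s\,ds$ added to $B^H$ corresponds to adding $\int_0^\cdot (K_H^{-1}\int_0^\cdot u)_r\,dW_r$ to $W$, and the abstract Girsanov theorem applies provided the ``Wiener drift'' $\dot h := K_H^{-1}\big(\int_0^\cdot u_s\,ds\big)$ lies in $L_2([0,1])$ and a Novikov-type condition holds. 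The density is then $\frac{d\wt\P}{d\P}=\exp(-\int_0^1 \dot h_r\,dW_r - \tfrac12\int_0^1 \dot h_r^2\,dr)$, so that $\E\big(\tfrac{d\P}{d\wt\P}\big)^\lambda = \wt\E\exp(\lambda\int_0^1\dot h_r\,d\wt W_r + \tfrac\lambda2\int_0^1\dot h_r^2\,dr)$, which after completing the square is controlled by $\E\exp\big(\tfrac{\lambda^2+\lambda}{2}\int_0^1 \dot h_r^2\,dr\big)$ (using that $\wt W$ is a $\wt\P$-Brownian motion and a stochastic-exponential / Cauchy--Schwarz step, assuming the relevant exponential integrability).

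\textbf{Key steps, in order.} First I would recall the explicit form of $\dot h$. For $H\le 1/2$ the operator $K_H^{-1}$ is a fractional differentiation operator, and one has the classical identity (see e.g. \cite{Nualart, Nualart-Malliavin}) $\dot h_t = c_H\, t^{1/2-H}\, D_{0+}^{1/2-H}\big(s^{H-1/2}u_s\big)(t)$, or more usefully the pointwise bound $|\dot h_t|\le N\big(t^{1/2-H}|u_t| + t^{1/2-H}\int_0^t \frac{|u_t-u_s|}{(t-s)^{3/2-H}}\,ds\big)$ up to lower-order terms; since $H\le 1/2$ the exponents are benign and $\|u\|_{L_\infty}\le M$ alone gives $\int_0^1 \dot h_r^2\,dr \le NM^2$ \emph{deterministically}. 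This immediately yields the first branch of \eqref{densitybound} and, since a bounded deterministic $L_2$-drift trivially satisfies Novikov, the existence of $\wt\P$. Second, for $H>1/2$ the operator $K_H^{-1}$ involves fractional \emph{integration} composed with differentiation, and the relevant representation gives $\dot h_t = c_H t^{H-1/2} \frac{d}{dt}\int_0^t (t-s)^{1/2-H} s^{1/2-H} u_s\,ds$; carrying out the differentiation and estimating produces exactly the bound $\int_0^1 \dot h_r^2\,dr \le N\big(M^2 + \xi\big)$ with $\xi$ as in \eqref{intassump} — this is the step where the specific shape of the integral in \eqref{intassump} is dictated. Third, with $\int_0^1\dot h_r^2\,dr$ bounded by $N(M^2+\xi)$ and $\E\exp(\lambda\xi)<\infty$ for all $\lambda$, Novikov's criterion holds, $\wt\P$ exists, and the density bound follows from the completing-the-square computation above together with Cauchy--Schwarz (to split the $\wt\P$-martingale exponential from the $\exp(\tfrac\lambda2\int\dot h^2)$ factor) and the hypothesis $\E\exp(\lambda'\xi)<\infty$.

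\textbf{Main obstacle.} The genuinely delicate part is the $H>1/2$ case: one must correctly identify $\dot h = K_H^{-1}(\int_0^\cdot u)$ — which requires care since $K_H^{-1}$ for $H>1/2$ is not a pure differentiation operator — and then bound $\int_0^1 |\dot h_t|^2\,dt$ by $N(M^2+\xi)$. After differentiating the Volterra integral defining $\dot h$, the $u_t$-part of the integrand contributes a term controlled by $M^2$, while the ``increment'' part $\int_0^t \frac{(t/s)^{H-1/2}(u_t-u_s)}{(t-s)^{H+1/2}}\,ds$ is exactly what \eqref{intassump} bounds after squaring and integrating in $t$; matching constants and exponents here (and handling the boundary terms at $s=0$ and $s=t$ produced by differentiating under the integral) is the one computation that is not routine. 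Everything downstream — Novikov, existence of $\wt\P$, the moment bound \eqref{densitybound} — is then a standard stochastic-exponential argument. Since the full details are somewhat technical, I would relegate them to Appendix~\ref{B:BBBBB} as the paper indicates, presenting in the main text only the reduction to $\int_0^1\dot h_r^2\,dr\le N(M^2+\xi)$ plus the completing-the-square estimate.
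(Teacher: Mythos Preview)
Your proposal is correct and follows essentially the same route as the paper: identify the induced Wiener drift $v=\dot h$ via the fractional kernel inversion, bound $\int_0^1|v_t|^2\,dt$ by $NM^2$ (deterministically) when $H\le 1/2$ and by $N(M^2+\xi)$ when $H>1/2$, invoke Novikov, and then get \eqref{densitybound} by the Cauchy--Schwarz / completing-the-square trick on the stochastic exponential. One caution: your explicit formula for $\dot h$ in the $H\le 1/2$ case has the fractional operator and the $s,t$ weights the wrong way round --- for $H<1/2$ the kernel inversion yields a fractional \emph{integral} (cf.\ the paper's formula $v_t = c\, t^{H-1/2}\int_0^t (t-s)^{-H-1/2}s^{1/2-H}u_s\,ds$), from which $|v_t|\le N M t^{1/2-H}$ follows immediately; your stated conclusion $\int_0^1|\dot h|^2\le NM^2$ is nonetheless correct.
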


In order to simplify the calculation of the integral in \eqref{intassump}, we provide the following technical but useful lemma. Since the proof is purely technical, we put its proof in the Appendix~\ref{B:BBBBB}.
\begin{Lemma}\label{L:annoyingintegral}
Let $H\in(1/2,1)$ and let $\rho\in(H-1/2,1]$. Then there exists a constant $N=N(H,\rho)$, such that for any function $f\in\C^\rho([0,1],\R^d)$ and any $n\in\N$ one has
\begin{align}\label{annoyingintegralbound}
&\int_0^1\Bigl(\int_0^t\frac{(t/s)^{H-1/2} |f_{\kappa_n(t)}-f_{\kappa_n(s)}|} {(t-s)^{H+1/2}}\,ds\Bigr)^2\,dt\le N[f]_{\C^\rho}^2.\\
&\int_0^1\Bigl(\int_0^t\frac{(t/s)^{H-1/2} |f_{t}-f_{s}|} {(t-s)^{H+1/2}}\,ds\Bigr)^2\,dt\le N[f]_{\C^\rho}^2.\label{notannoyingintegralbound}
\end{align}
\end{Lemma}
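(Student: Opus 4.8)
The plan is to reduce everything to the estimation of the single double integral
\begin{equ}
\int_0^1\Bigl(\int_0^t\frac{(t/s)^{H-1/2}\,g(s,t)}{(t-s)^{H+1/2}}\,ds\Bigr)^2\,dt,
\end{equ}
where $g(s,t)$ stands for either $|f_{\kappa_n(t)}-f_{\kappa_n(s)}|$ or $|f_t-f_s|$. Since $f\in\C^\rho$ with $\rho>H-1/2$, in the non-discretized case \eqref{notannoyingintegralbound} we simply bound $g(s,t)\le[f]_{\C^\rho}|t-s|^\rho$; in the discretized case \eqref{annoyingintegralbound} we use $|\kappa_n(t)-\kappa_n(s)|\le |t-s|$ (plus $|\kappa_n(t)-\kappa_n(s)|=0$ when $s,t$ lie in the same grid interval), so that $g(s,t)\le [f]_{\C^\rho}|t-s|^\rho$ holds in both cases and it suffices to prove \eqref{notannoyingintegralbound}. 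Thus the whole lemma comes down to showing
\begin{equ}\label{eq:reduced}
\int_0^1\Bigl(\int_0^t\frac{(t/s)^{H-1/2}\,(t-s)^\rho}{(t-s)^{H+1/2}}\,ds\Bigr)^2\,dt\le N(H,\rho).
\end{equ}

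Next I would estimate the inner integral. Write $(t/s)^{H-1/2}=t^{H-1/2}s^{1/2-H}$ and set $\mu:=\rho-H-1/2$, so the inner integrand is $t^{H-1/2}s^{1/2-H}(t-s)^\mu$. Note $\mu>-1$ since $\rho>H-1/2$, and $1/2-H>-1$ since $H<1$, so the inner integral converges at both endpoints; by the Beta-integral identity \eqref{eq:integral} (with $a=\mu$, $b=1/2-H$) it equals $N(H,\rho)\,t^{H-1/2}\,t^{\mu+1/2-H+1}=N(H,\rho)\,t^{\mu+1}=N(H,\rho)\,t^{\rho-H+1/2}$. Hence the outer integral is $N(H,\rho)\int_0^1 t^{2(\rho-H+1/2)}\,dt=N(H,\rho)\int_0^1 t^{2\rho-2H+1}\,dt$, which is finite because $2\rho-2H+1>2(H-1/2)-2H+1=0>-1$. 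This gives \eqref{eq:reduced} and therefore both \eqref{annoyingintegralbound} and \eqref{notannoyingintegralbound}.

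The only point requiring a little care — and the main (mild) obstacle — is the discretized case: one must check that replacing $t,s$ by $\kappa_n(t),\kappa_n(s)$ does not cost anything, i.e. that $|\kappa_n(t)-\kappa_n(s)|\le|t-s|$ uniformly in $n$. This is immediate from the monotonicity of $\kappa_n$ and $|\kappa_n(r)-\kappa_n(r')|\le|r-r'|$, but it is what allows the bound to be independent of $n$; without it the constant would blow up near the diagonal. Everything else is a direct computation via the two integral identities of the preceding proposition, and it is worth remarking that the proof only uses $\rho>H-1/2$ (to make $\mu>-1$), which is exactly the stated hypothesis.
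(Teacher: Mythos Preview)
Your argument for \eqref{notannoyingintegralbound} is correct and matches the paper's proof exactly. The gap is in the reduction of \eqref{annoyingintegralbound} to \eqref{notannoyingintegralbound}: the claimed inequality $|\kappa_n(t)-\kappa_n(s)|\le|t-s|$ is \emph{false}. The map $\kappa_n(r)=\lfloor nr\rfloor/n$ is a step function and is not $1$-Lipschitz; for instance, with $s=k/n-\eps$ and $t=k/n$ one has $\kappa_n(t)-\kappa_n(s)=1/n$ while $t-s=\eps$ can be made arbitrarily small. The correct estimate is only $|\kappa_n(t)-\kappa_n(s)|\le|t-s|+1/n$, and near the diagonal $t\approx s$ this extra $1/n$ cannot be absorbed into $|t-s|$.

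This is precisely the obstacle that forces the paper to split the inner integral in \eqref{annoyingintegralbound}. On the region $s\le\kappa_n(t)-(2n)^{-1}$ one has $t-s\ge(2n)^{-1}$, so the additive error $1/n$ is controlled by $2(t-s)$ and one gets $\kappa_n(t)-\kappa_n(s)\le 3(t-s)$; from there your Beta-integral computation applies. On the remaining region $s\in[\kappa_n(t)-(2n)^{-1},t]$ this fails, and one instead observes that $\kappa_n(s)\in\{\kappa_n(t),\kappa_n(t)-1/n\}$: the integrand either vanishes or contributes a term of size $[f]_{\C^\rho}n^{-\rho}(t-\kappa_n(t))^{1/2-H}$, whose square integrated over $t\in[0,1]$ is bounded by $N[f]_{\C^\rho}^2 n^{2H-1-2\rho}\le N[f]_{\C^\rho}^2$ thanks to $\rho>H-1/2$. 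Without this splitting the constant would indeed blow up near the diagonal, exactly as you suspected---but for the opposite reason from the one you gave.
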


\section{Additive fractional noise}\label{sec:fractional}

In this section we provide the proof of Theorem~\ref{thm:main fractional}. We follow the strategy outlined on Section \ref{sec:outline}: In Sections \ref{SS:QE} and \ref{SS:RL} we prove the quadrature bound and the regularization bound, respectively.
Based on these bounds, the proof of the theorem is placed in Section~\ref{S:proof21}.

\subsection{Quadrature estimates}\label{SS:QE}

The goal of this subsection is to prove the quadrature bound \eqref{DKBound Xn fBM}. The proof consists of two steps. First, in Lemma~\ref{lem:(ii)} we prove this bound for the case of fractional Brownian motion; then we extend this result to the process $X$ by applying the Girsanov theorem.

Recall the definition of functions $\kappa_n$ in \eqref{kappadef} and $\gamma$ in \eqref{eq:gamma}.
\begin{Lemma}\label{lem:(ii)}
Let $H\in(0,1)$, $\alpha\in[0,1]$, $p>0$, and take $\eps\in(0,1/2]$.
Then
for all $f\in \C^\alpha$, $0\leq s\leq t\leq 1$, $n\in\N$, one has the bound
\begin{equation}\label{DKBound frac}
\Bigl\|\int_s^t (f(B^H_r)-f(B^H_{\kappa_n(r)}))\, dr\Bigr\|_{L_p(\Omega)}
\leq N\|f\|_{\C^\alpha} n^{-\gamma(\alpha, H)+\eps}|t-s|^{1/2+\eps} ,
\end{equation}
with some $N=N(p, d,\alpha,\eps, H)$.
\end{Lemma}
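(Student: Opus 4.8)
I would prove \eqref{DKBound frac} by a direct application of the stochastic sewing lemma (Proposition~\ref{thm:Sewing-lemma}), mimicking \emph{Step 1} of the outline in Section~\ref{sec:outline} but now for general $H\in(0,1)$. Set
\begin{equ}
\A_t:=\int_0^t\bigl(f(B^H_r)-f(B^H_{\kappa_n(r)})\bigr)\,dr,\qquad A_{s,t}:=\E^s(\A_t-\A_s)=\E^s\int_s^t\bigl(f(B^H_r)-f(B^H_{\kappa_n(r)})\bigr)\,dr.
\end{equ}
Since $\E^s\delta A_{s,u,t}=0$, the condition \eqref{SSL2} holds with $C_2=0$ (any $\eps_2>0$), and by \eqref{SSL1 cA}, \eqref{SSL2 cA} the sewn process coincides with $\A$; so it remains to verify \eqref{SSL1}, i.e.\ to show
\begin{equ}\label{eq:plan-goal}
\|A_{s,t}\|_{L_p(\Omega)}\le N\|f\|_{\C^\alpha}n^{-\gamma+\eps}|t-s|^{1/2+\eps}
\end{equ}
for $S\le s\le t\le T$ with $N$ independent of the interval, after which \eqref{SSL3 cA} gives exactly \eqref{DKBound frac} (first for $p\ge 2$, then for all $p>0$ by Jensen/Hölder). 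A minor point to handle first: it suffices to prove the case $f\in\C^\alpha$ with $\alpha\in(0,1]$, and then the case $\alpha=0$ (no continuity) follows by taking $\alpha'\in(0,1]$ arbitrarily small, since $\gamma(0,H)=1/2\ge\gamma(\alpha',H)-\eps$ for $\alpha'$ small; alternatively one runs the same argument directly with $\alpha=0$, where the heat-kernel smoothing supplies the regularity.

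\textbf{Verifying \eqref{eq:plan-goal}.} I distinguish two regimes. \emph{Short intervals} $|t-s|\le 2/n$: using Proposition~\ref{prop:HK}\eqref{eq:HK bound2} (or just $\|f\|_{\C^0}$) together with $\|f(B^H_r)-f(B^H_{\kappa_n(r)})\|_{L_p(\Omega)}\le N\|f\|_{\C^\alpha}\|B^H_r-B^H_{\kappa_n(r)}\|_{L_p}^{\alpha\wedge1}\le N\|f\|_{\C^\alpha}n^{-\alpha H}$ by Proposition~\ref{prop:fractional}\eqref{prop 1}, conditional Jensen gives $\|A_{s,t}\|_{L_p}\le N\|f\|_{\C^\alpha}|t-s|\,n^{-\alpha H}\le N\|f\|_{\C^\alpha}|t-s|^{1/2+\eps}n^{-1/2-\alpha H+\eps}$, and $1/2+\alpha H\ge\gamma$, so this is bounded by the right side of \eqref{eq:plan-goal}. \emph{Long intervals} $|t-s|>2/n$: let $s':=\kappa_n(s)+2/n$ so that $r\ge s'$ forces $\kappa_n(r)\ge s$. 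Split $A_{s,t}=\int_s^{s'}+\int_{s'}^t$. The first integral is over a length-$\le 3/n$ interval and is handled as in the short case, contributing $N\|f\|_{\C^\alpha}n^{-1-\alpha H}\le N\|f\|_{\C^\alpha}|t-s|^{1/2+\eps}n^{-1/2-\alpha H}$ using $|t-s|>2/n$. For the second, Proposition~\ref{prop:fractional}\eqref{prop 2} gives $\E^s f(B^H_r)=\cP_{c^2(s,r)}f(\E^sB^H_r)$ and $\E^s f(B^H_{\kappa_n(r)})=\cP_{c^2(s,\kappa_n(r))}f(\E^sB^H_{\kappa_n(r)})$. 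I bound the difference in two pieces: (a) replacing the argument $\E^sB^H_{\kappa_n(r)}$ by $\E^sB^H_r$, which by Proposition~\ref{prop:HK}\eqref{eq:HK bound2} and Proposition~\ref{prop:fractional}\eqref{prop 5} costs $\|\cP_{c^2(s,\kappa_n(r))}f\|_{\C^{\alpha\wedge 1}}\cdot\|\E^sB^H_r-\E^sB^H_{\kappa_n(r)}\|_{L_p}^{\alpha\wedge1}$, i.e.\ $N\|f\|_{\C^\alpha}c^2(s,\kappa_n(r))^{((\alpha-(\alpha\wedge1))\wedge0)/2}(n^{-1}(r-s)^{H-1})^{\alpha\wedge1}$ — for $\alpha\le1$ the first factor is harmless and one gets $N\|f\|_{\C^\alpha}n^{-\alpha}(r-s)^{(H-1)\alpha}$; (b) the time-shift $\|(\cP_{c^2(s,r)}-\cP_{c^2(s,\kappa_n(r))})f\|_{\C^0}$, which by Proposition~\ref{prop:HK}\eqref{eq:HK bound} with $\beta=0$ and a suitable $\delta$ equals $N\|f\|_{\C^\alpha}c^2(s,\kappa_n(r))^{\alpha/2-\delta}(c^2(s,r)-c^2(s,\kappa_n(r)))^{\delta}$, and then Proposition~\ref{prop:fractional}\eqref{prop 3} bounds the increment of $c^2$ by $N n^{-1}(r-s)^{2H-1}$. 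Choosing $\delta$ appropriately (close to $1/2+\alpha H-\eps$, capped at $1$, as in \eqref{eq:explanation2}) and using $(r-s)\ge(r-s')$ for $r\ge s'$ plus $\kappa_n(r)-s\ge r-s'$, one obtains an integrand of the form $N\|f\|_{\C^\alpha}n^{-\gamma+\eps}(r-s')^{-1+\gamma-\eps}$ (with the various $(r-s)$-powers regrouped via the constraint $\delta\le1$). Integrating over $r\in[s',t]$ via \eqref{eq:integral} gives $N\|f\|_{\C^\alpha}n^{-\gamma+\eps}|t-s|^{\gamma-\eps}\le N\|f\|_{\C^\alpha}n^{-\gamma+\eps}|t-s|^{1/2+\eps}$ (since $\gamma-\eps\ge1/2+\eps$ can be arranged, or simply $|t-s|\le1$), which is \eqref{eq:plan-goal}.

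\textbf{Main obstacle.} The one genuinely delicate point is the bookkeeping of exponents in the long-interval case so that the final integrand is integrable near $r=s'$ (exponent $>-1$) while the overall prefactor is exactly $n^{-\gamma+\eps}$: this is where the three regimes $H<1/2$, $H=1/2$, $H>1/2$ and the cap $\gamma=(1/2+\alpha H)\wedge 1$ interact, via the choice of $\delta$ and the $(\cdot)\wedge 0$ truncations in Proposition~\ref{prop:HK}\eqref{eq:HK bound}, \eqref{eq:diffnewbound}. In particular when $1/2+\alpha H>1$ the regularization is already at its maximum and one must not try to extract more than $\delta=1$ worth of smoothing; the extra $\eps$ absorbs the borderline logarithmic losses. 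Everything else — the reduction to \eqref{SSL1}, the identification of the sewn process, the passage from $p\ge2$ to all $p>0$, and the $\alpha=0$ case — is routine. Note it is cleaner to in fact use Proposition~\ref{prop:HK}\eqref{eq:diffnewbound} and \eqref{randomresult} directly for the combined estimate of $\E^s(f(B^H_r)-f(B^H_{\kappa_n(r)}))$ rather than splitting into (a)+(b) by hand; those two statements are tailored to exactly this computation, with the random shift $\xi$ there playing the role of $\E^sB^H_r-\E^sB^H_{\kappa_n(r)}$ and the output exponent $(H\alpha-1/2)\wedge0$ combining with the $n^{-1/2}$ (from $(u-s)^{1/2}$ with $u-s\sim n^{-1}$) to give precisely $n^{-\gamma}$ up to $\eps$.
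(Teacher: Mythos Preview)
Your overall strategy is correct and matches the paper's: apply Proposition~\ref{thm:Sewing-lemma} with $A_{s,t}=\E^s\int_s^t(f(B^H_r)-f(B^H_{\kappa_n(r)}))\,dr$, note that $\E^s\delta A_{s,u,t}=0$, and reduce everything to the $L_p$-bound on $A_{s,t}$ via a short/long interval split and the heat-kernel representation from Proposition~\ref{prop:fractional}\eqref{prop 2}.

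There is, however, a genuine gap in your treatment of piece (a), the argument shift. You bound
\[
|\cP_{c^2(s,\kappa_n(r))}f(\E^sB^H_r)-\cP_{c^2(s,\kappa_n(r))}f(\E^sB^H_{\kappa_n(r)})|
\]
by $\|\cP_{c^2}f\|_{\C^{\alpha}}\cdot\|\E^sB^H_r-\E^sB^H_{\kappa_n(r)}\|_{L_p}^{\alpha}$, i.e.\ you use Proposition~\ref{prop:HK}\eqref{eq:HK bound2} with $\beta=\alpha$. This throws away the smoothing of the heat kernel and leaves you with a contribution $N\|f\|_{\C^\alpha}n^{-\alpha}(r-s)^{(H-1)\alpha}$, which after integration is $n^{-\alpha}|t-s|^{1+(H-1)\alpha}$. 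Evaluating at $|t-s|=1$ gives only $n^{-\alpha}$, and you need $n^{-\gamma+\eps}$; since $\gamma=1/2+\alpha H$ can be much larger than $\alpha$ (take e.g.\ $H=1/2$, $\alpha$ small), the bound \eqref{eq:plan-goal} fails. The paper instead takes $\beta=1$ in Proposition~\ref{prop:HK}\eqref{eq:HK bound2}, picking up the factor $c^{\alpha-1}(s,\kappa_n(r))\sim(r-s)^{H(\alpha-1)}$, and then uses the Lipschitz (not $\alpha$-H\"older) bound together with $\|\E^sB^H_r-\E^sB^H_{\kappa_n(r)}\|_{L_p}\le Nn^{-1}(r-s)^{H-1}$ to the \emph{first} power. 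This yields $n^{-1}(r-s)^{H\alpha-1}$ and, after integration, the correct $n^{-1}|t-s|^{H\alpha}$. Your piece (b) is fine (the paper simply takes $\delta=1$).

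Your closing remark about using Proposition~\ref{prop:HK}\eqref{eq:diffnewbound} and \eqref{randomresult} does not rescue the argument either: those bounds are tailored to varying the \emph{conditioning} time in $c^2(\cdot,t)$ with fixed target $t$, and deliver only a factor $(u-s)^{1/2}$. In the quadrature problem the small parameter is the target-time increment $r-\kappa_n(r)\le n^{-1}$, from which one can and must extract the full power $n^{-1}$; applying \eqref{eq:diffnewbound} after reparametrizing gives only $n^{-1/2}$ and again falls short when $\gamma>1/2$.
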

\begin{proof}
It suffices to prove the bound for $p\geq 2$.
Define for $0\leq s\leq t\leq 1$
$$
A_{s,t}:=\E^s \int_s^t (f(B^H_r)-f(B^H_{\kappa_n(r)}))\, dr.
$$
Then, clearly, for any $0\leq s\leq u\leq t\leq 1$
\begin{align*}
\delta A_{s,u,t}:&=A_{s,t}-A_{s,u}-A_{u,t}\\
&=\E^s \int_u^t (f(B^H_r)-f(B^H_{\kappa_n(r)}))\, dr-\E^u \int_u^t(f(B^H_r)-f(B^H_{\kappa_n(r)}))\, dr.
\end{align*}
Let us check that all the conditions of the stochastic sewing lemma (Proposition~\ref{thm:Sewing-lemma}) are satisfied.
Note that
\begin{equation*}
\E^s \delta A_{s,u,t}=0,
\end{equation*}
and so condition \eqref{SSL2} trivially holds, with $C_2=0$.
To establish \eqref{SSL1}, let $s \in [k/n, (k+1)/n)$ for some $k \in \{0,\hdots,n-1\}$.
Suppose first that $t \in [(k+4)/n, 1]$. We write
\begin{equation}\label{Astdec}
|A_{s,t}|\leq  \Big(\int_s^{(k+4)/n}
 +\int_{(k+4)/n}^t\Big) |\E^s \big( f(B^H_r)-f(B^H_{\kappa_n(r)})\big)|\, dr=:I_1+I_2.
\end{equation}

The bound for $I_1$ is straightforward: by conditional Jensen's inequality, the definition of $\C^\alpha$ norm,  and Proposition~ \ref{prop:fractional} \eqref{prop 1} we have
\begin{align}\label{AstI1}
\| I_1\|_{L_p(\Omega)} &\leq
\int_s^{(k+4)/n} \| f(B^H_r)-f(B^H_{\kappa_n(r)}) \|_{L_p(\Omega)} \, dr
\nn\\
&\leq N
\| f\|_{\C^\alpha}n^{-1-\alpha H} \leq N \| f\|_{\mathcal{C}^\alpha} n^{-\gamma+\eps}|t-s|^{1/2+\eps},
\end{align}
where the last inequality follows from the fact that $n^{-1}\leq|t-s|$.

Now let us estimate $I_2$. Using Proposition \ref{prop:fractional} \eqref{prop 2}, we derive
\begin{align}\label{AstI2}
I_2\le&\int_{(k+4)/n}^t|\cP_{c^2(s,r)}f(\E^sB^H_r)-\cP_{c^2(s,\kappa_n(r))}f(\E^sB^H_r)|\,dr
\nn\\
&\quad+\int_{(k+4)/n}^t |\cP_{c^2(s,\kappa_n(r))}f(\E^sB^H_r)-\cP_{c^2(s,\kappa_n(r))}f(\E^sB^H_{\kappa_n(r)})|\,dr\nn\\
=:&I_{21}+I_{22}.
\end{align}
To bound $I_{21}$, we apply
Proposition \ref{prop:HK} \eqref{eq:HK bound} with $\beta=0$, $\delta=1$ and Proposition \ref{prop:fractional} \eqref{prop 3}. We get
\begin{align}\label{AstI21}
\|I_{21}\|_{L_p(\Omega)}&\leq N\| f\|_{\C^\alpha} \int_{(k+4)/n}^t\big(c^2(s,r)-c^2(s,\kappa_n(r))\big)c^{\alpha-2}(s,\kappa_n(r))\,dr
\nn\\
&\leq
N\| f\|_{\C^\alpha}\int_{(k+4)/n}^t
n^{-1}|r-s|^{2H-1}|r-s|^{H(\alpha-2)}\,dr
\nn\\
&\leq
N\| f\|_{\C^\alpha}n^{-1}\int_s^t |r-s|^{-1+\alpha H}\,dr
\nn\\
&\leq N\| f\|_{\C^\alpha} n^{-1}|t-s|^{\alpha H}.
\end{align}

To deal with $I_{22}$,  we use Proposition~\ref{prop:HK} \eqref{eq:HK bound2} with $\beta=1$ and Proposition~\ref{prop:fractional} \eqref{prop 5}. We deduce
\begin{align}\label{AstI22}
\|I_{22}\|_{L_p(\Omega)}&
\leq N\| f\|_{\C^\alpha}\int_{(k+4)/n}^t \|\E^sB^H_r-\E^sB^H_{\kappa_n(r)}\|_{L_p(\Omega)}c^{\alpha-1}(s,\kappa_n(r))\,dr
\nn\\
&\leq N\| f\|_{\C^\alpha}\int_{(k+4)/n}^t n^{-1}|r-s|^{H-1}|r-s|^{-H(1-\alpha)}\,dr
\nn\\
&\leq N\| f\|_{\C^\alpha}n^{-1}|t-s|^{\alpha H},
\end{align}
where in the second inequality we have also used that $\kappa_n(r)-s\ge (r-s)/2$.
Combining \eqref{AstI21} and \eqref{AstI22}, and taking again into account that $n^{-1}\leq|t-s|$, we get
\begin{equ}
\| I_2\|_{L_p(\Omega)} \leq  N\| f\|_{\C^\alpha} n^{-\gamma+\eps}|t-s|^{1/2+\eps}.
\end{equ}
Recalling \eqref{AstI1}, we finally conclude
\begin{equ}
\| A_{s,t}\|_{L_p(\Omega)}\leq N \| f\|_{\mathcal{C}^\alpha} n^{-\gamma+\eps}|t-s|^{1/2+\eps}.
\end{equ}
It remains to show the same bound for $t \in (s, (k+4)/n]$. However this is almost straightforward. We write
\begin{align*}
\|A_{s,t}\|_{L_p(\Omega)} &\leq \int_s^t \| f(B^H_r)-f(B^H_{\kappa_n(r)}) \|_{L_p(\Omega)} \, dr
\\ &  \leq N \|f\|_{\mathcal{C}^\alpha} n^{-\alpha H}|t-s|
 \leq N \| f\|_{\mathcal{C}^\alpha} n^{-\gamma+\eps}|t-s|^{1/2+\eps},
\end{align*}
where the last inequality uses that in this case $|t-s|\leq 4 n^{-1}$.
Thus, \eqref{SSL1} holds, with $C_1:=N \| f\|_{\mathcal{C}^\alpha} n^{-\gamma+\eps}$, $\eps_1:=\eps$.

Thus all the conditions of the stochastic sewing lemma are satisfied.
The process
$$
\tilde \A_t:=\int_0^t  (f(B^H_r)-f(B^H_{\kappa_n(r)}))\,dr
$$
is also $\bF$-adapted, satisfies \eqref{SSL2 cA} trivially (the left-hand side is $0$), and
$$
\| \tilde{\mathcal{A}}_t-\tilde{\mathcal{A}}_s-A_{s,t}\|_{L_p(\Omega)} \leq \|f\|_{\C^0} |t-s|\leq N |t-s|^{1/2+\eps},
$$
which shows that it also satisfies \eqref{SSL1 cA}.
Therefore by uniqueness $\A_t=\tilde \A_t$. The bound \eqref{SSL3 cA} then yields precisely \eqref{DKBound frac}.
\end{proof}

\begin{Lemma}\label{lem:we need better labels}
Let $H\in(0,1)$, $\alpha\in[0,1]$ such that $\alpha>1-1/(2H)$, $p>0$, $\eps\in(0,1/2]$. Let $b\in\C^\alpha$ and $X^n$ be the solution of \eqref{eq:main approx frac}.
Then for all $f\in\C^\alpha$, $0\leq s\leq t\leq 1$, $n\in\N$, one has the bound
\begin{equation}\label{DKBound Xn fBM}
\Bigl\|\int_s^t (f(X_r^n)-f(X^n_{\kappa_n(r)}))\, dr\Bigr\|_{L_p(\Omega)}
\leq N\|f\|_{\C^\alpha} |t-s|^{1/2+\eps} n^{-\gamma+\eps}
\end{equation}
with some $N=N(\|b\|_{\C^\alpha},p, d,\alpha,\eps, H)$.
\end{Lemma}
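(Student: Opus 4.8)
The plan is to reduce \eqref{DKBound Xn fBM} to the already-established bound \eqref{DKBound frac} for the genuine fractional Brownian motion $B^H$ by a change of measure. Recall that $X^n$ solves $dX^n_t = b(X^n_{\kappa_n(t)})\,dt + dB^H_t$ with $X^n_0 = x^n_0$. Setting $u_t := -b(X^n_{\kappa_n(t)})$, we have that $X^n_t - x^n_0 = B^H_t + \int_0^t (-u_s)\,ds$, so that under a new measure $\wt\P$ obtained from Proposition~\ref{Pr:Girsanov}, the process $\wt B^H := B^H + \int_0^\cdot (-u_s)\,ds = X^n - x^n_0$ is a fractional Brownian motion with Hurst parameter $H$. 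The point is that $b$ is bounded, so $\|u\|_{L_\infty(0,1)} \le \|b\|_{\C^0} =: M$ almost surely, verifying \eqref{globalboundu}.

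The first step is to check the hypotheses of Proposition~\ref{Pr:Girsanov}. For $H \le 1/2$ there is nothing further to verify. For $H > 1/2$, I need the integrability condition \eqref{intassump}. Here the key observation is that $u_t = -b(X^n_{\kappa_n(t)})$ is a \emph{piecewise constant in time} process (constant on each interval $[j/n,(j+1)/n)$), and $t \mapsto X^n_{\kappa_n(t)}$ changes only by jumps of size $|X^n_{(j+1)/n} - X^n_{j/n}|$, which are controlled by $n^{-1}\|b\|_{\C^0} + |B^H_{(j+1)/n} - B^H_{j/n}|$. One should show that $t\mapsto b(X^n_{\kappa_n(t)})$ lies in some $\C^\rho$ with $\rho \in (H-1/2, 1]$ and apply Lemma~\ref{L:annoyingintegral} (the estimate \eqref{annoyingintegralbound}), with the random variable $\xi := N[t\mapsto b(X^n_{\kappa_n(t)})]_{\C^\rho}^2$. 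Since $b\in\C^\alpha$ and $\alpha > 1-1/(2H)$, one has $\alpha H > H - 1/2$, so taking $\rho = \alpha H$ (or slightly less) works; then $[t\mapsto b(X^n_{\kappa_n(t)})]_{\C^\rho}$ is bounded in terms of $\|b\|_{\C^\alpha}$ and the H\"older seminorm of $t\mapsto B^H_{\kappa_n(t)}$, which has Gaussian tails, so $\E\exp(\lambda\xi) < \infty$ for all $\lambda > 0$. I expect this verification — getting the right H\"older exponent and controlling the relevant seminorm's exponential moments uniformly in $n$ — to be the main technical obstacle.

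Once the Girsanov change of measure is justified, the second step is routine: under $\wt\P$, the process $X^n - x^n_0$ is a fractional Brownian motion, so by Lemma~\ref{lem:(ii)} applied to the function $\wt f(x) := f(x + x^n_0)$ (which has the same $\C^\alpha$ norm as $f$), we get
\begin{equ}
\Bigl\|\int_s^t (f(X^n_r) - f(X^n_{\kappa_n(r)}))\,dr\Bigr\|_{L_p(\wt\P)} = \Bigl\|\int_s^t (\wt f(\wt B^H_r) - \wt f(\wt B^H_{\kappa_n(r)}))\,dr\Bigr\|_{L_p(\wt\P)} \le N\|f\|_{\C^\alpha}|t-s|^{1/2+\eps}n^{-\gamma+\eps}.
\end{equ}
The final step is to transfer this from $L_p(\wt\P)$ back to $L_p(\P)$. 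This is done by H\"older's inequality: writing $Y := \int_s^t (f(X^n_r) - f(X^n_{\kappa_n(r)}))\,dr$, one has $\E|Y|^p = \wt\E\bigl[\tfrac{d\P}{d\wt\P}|Y|^p\bigr] \le \bigl(\wt\E|Y|^{2p}\bigr)^{1/2}\bigl(\wt\E(\tfrac{d\P}{d\wt\P})^2\bigr)^{1/2}$, and the second factor is finite by the density bound \eqref{densitybound} in Proposition~\ref{Pr:Girsanov} (with $\lambda = 2$). Applying the $L_{2p}(\wt\P)$ version of the bound just obtained then yields \eqref{DKBound Xn fBM}, with the constant $N$ now additionally depending on $\|b\|_{\C^\alpha}$ through the bound on the Radon--Nikodym density. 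It is worth noting that the constant from \eqref{densitybound} is uniform in $n$ precisely because the exponential-moment bound on $\xi$ in the previous paragraph is uniform in $n$.
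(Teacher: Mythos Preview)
Your approach is exactly the paper's: apply Proposition~\ref{Pr:Girsanov} with $u_t=b(X^n_{\kappa_n(t)})$, verify \eqref{intassump} via Lemma~\ref{L:annoyingintegral} when $H>1/2$, then transfer the $L_p(\wt\P)$ bound from Lemma~\ref{lem:(ii)} back to $L_p(\P)$ by Cauchy--Schwarz and \eqref{densitybound}.

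One point needs correction. You write that ``$t\mapsto b(X^n_{\kappa_n(t)})$ lies in some $\C^\rho$'' and later bound ``$[t\mapsto b(X^n_{\kappa_n(t)})]_{\C^\rho}$''; but this function is piecewise constant, so its $\C^\rho$ seminorm is infinite for any $\rho>0$. The correct application of \eqref{annoyingintegralbound} takes the \emph{continuous} function $f_t:=b(X^n_t)$ as input---the discretization $\kappa_n$ is already built into the left-hand side of that estimate. Then $[b(X^n_\cdot)]_{\C^{\alpha(H-\delta)}}\le\|b\|_{\C^\alpha}[X^n]_{\C^{H-\delta}}^\alpha\le\|b\|_{\C^\alpha}(\|b\|_{\C^0}+[B^H]_{\C^{H-\delta}})^\alpha$, and the exponential moments follow from those of $[B^H]_{\C^{H-\delta}}$. (Two smaller slips: your sign convention for $u$ is inconsistent with Proposition~\ref{Pr:Girsanov}, and $\wt\E[(d\P/d\wt\P)^2]=\E[d\P/d\wt\P]$ corresponds to $\lambda=1$ in \eqref{densitybound}, not $\lambda=2$.)
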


\begin{proof}
Without loss of generality, we assume $\alpha<1$. Let
$$
\psi^n(t):=\int_0^t b(X^n_{\kappa_n(t)})\,dt.
$$
Let us apply the Girsanov theorem (Theorem~\ref{Pr:Girsanov}) to the function $u(t)=b(X^n_{\kappa_n(t)})$. First let us check that all the conditions of this theorem hold.

First, we obviously have $|u(t)|\le  \|b\|_{\C^0}$, and thus  \eqref{globalboundu} holds with $M= \|b\|_{\C^0}$.

Second, let us check condition \eqref{intassump} in the case $H>1/2$. Fix $\lambda>0$ and small $\delta>0$ such that $\alpha(H-\delta)>H-1/2$; such $\delta$ exists thanks to the assumption $\alpha>1-1/(2H)$. We apply Lemma~\ref{L:annoyingintegral} for the function $f:=b(X^n)$ and $\rho:=\alpha(H-\delta)$. We have
\begin{align*}
\int_0^1\Bigl(\int_0^t\frac{(t/s)^{H-1/2}|b(X^n_{\kappa_n(t)})-b(X^n_{\kappa_n(s)})|}
{(t-s)^{H+1/2}}\,ds\Bigr)^2\,dt&\le N[b(X^n)]_{\C^{\alpha(H-\delta)}}^2\\
&=N\|b\|_{\C^{\alpha}}^2[X^n]_{\C^{H-\delta}}^{2\alpha}\\
&\le N \|b\|_{\C^{\alpha}}^2 (\|b\|_{\C^{0}}^{2\alpha}+ [B^H]_{\C^{H-\delta}}^{2\alpha})=:\xi.
\end{align*}
Therefore,
\begin{equation}\label{exponentmoment}
\E e^{\lambda\xi}\le N(\|b\|_{\C^{\alpha}},\alpha,\delta,H,\lambda)<\infty,
\end{equation}
where we used the fact that the H\"older constant
$[B^H]_{\C^{H-\delta}}$ satisfies $\E \exp(\lambda [B^H]_{\C^{H-\delta}}^{2\alpha})\leq N$ for any $\lambda\ge0$. Thus, condition~\eqref{intassump} is satisfied. Hence all the conditions of Theorem~\ref{Pr:Girsanov} hold. Thus, there exists a probability measure $\wt\P$ equivalent to $\P$ such that the process $\wt B^H:=B^H+\psi^n$ is a
fractional $H$-Brownian motion on $[0,1]$ under $\wt\P$.

Now we can derive the desired bound \eqref{DKBound Xn fBM}. We have
\begin{align}\label{eq:Before-Girsanov}
&\E^{\P} \Bigl|  \int_s^t \left( f(X^n_r)- f(X^n_{\kappa_n(r)}) \right) \, dr \Bigr|^p\nn\\
&\qquad=\E^{\wt\P} \Bigl[\Bigl|  \int_s^t \left( f(X^n_r)- f(X^n_{\kappa_n(r)}) \right) \, dr \Bigr|^p\frac{d\P}{d\wt\P}\Bigr]\nn\\
&\qquad\le \Bigl(\E^{\wt\P} \Bigl|  \int_s^t \left( f(X^n_r)- f(X^n_{\kappa_n(r)}) \right) \, dr \Bigr|^{2p}\Bigr)^{1/2}\Bigl(\E^{\wt\P}\Bigl[\frac{d\P}{d\wt\P}\Bigr]^2\Bigr)^{1/2}
\nn\\
&\qquad=\Bigl(\E^{\wt\P} \Bigl|  \int_s^t \left( f(\wt B^H_r+x_0^n)- f(\wt B^H_{\kappa_n(r)}+x_0^n) \right) \, dr \Bigr|^{2p}\Bigr)^{1/2}\Bigl(\E^{\P}\frac{d\P}{d\wt\P}\Bigr)^{1/2}
\nn\\
&\qquad=\Bigl(\E^{\P} \Bigl|  \int_s^t \left( f( B^H_r+x_0^n)- f( B^H_{\kappa_n(r)}+x_0^n) \right) \, dr \Bigr|^{2p}\Bigr)^{1/2}\Bigl(\E^{\P}\frac{d\P}{d\wt\P}\Bigr)^{1/2}.
\end{align}
Taking into account \eqref{exponentmoment}, we deduce by Theorem~\ref{Pr:Girsanov} that
\begin{equation*}
\E^{\P}\frac{d\P}{d\wt\P}\le N(\|b\|_{\C^{\alpha}},\alpha,\delta,H,\lambda).
\end{equation*}
Hence, using \eqref{DKBound frac}, we can continue \eqref{eq:Before-Girsanov} in the following way:
\begin{equation*}
\E^{\P} \Bigl|  \int_s^t \left( f(X^n_r)- f(X^n_{\kappa_n(r)}) \right) \, dr \Bigr|^p\le N\|f\|_{\C^\alpha}^p n^{-p(\gamma(\alpha, H)+\eps)}|t-s|^{p(1/2+\eps)},
\end{equation*}
which implies the statement of the theorem.
\end{proof}

\subsection{A regularization lemma}\label{SS:RL}

The goal of this subsection is to establish the regularization bound \eqref{ourboundasreg}. Its proof consists of a number of steps. First, in Lemma~\ref{L:Daviebound} we derive an extension of the corresponding bound of Davie \cite[Proposition~2.1]{Davie} for the fractional Brownian motion case. It is important that the right--hand side of this bound depends on $p$ as $\sqrt p$ (rather than $p$); this will be crucial later in the proof of Lemma~\ref{L:asbound} and Theorem~\ref{thm:main fractional}. Then in Lemma~\ref{L:asbound} we obtain the pathwise version of this lemma and extend it to a wider class of processes (fractional Brownian motion with drift instead of a fractional Brownian motion). Finally, in Lemma~\ref{L:finfin} we obtain the desired regularization bound.

\begin{Lemma}\label{L:Daviebound}
Let $H\in(0,1)$, $\alpha\in(-1/(2H),0]$. Let $f\in\C^\infty$. Then there exists a constant $N=N(d,\alpha,H)$ such that for any $p\ge2$, $s,t\in[0,1]$ we have
\begin{equation}\label{ourbound}
\Bigl\|\int_s^t f(B_r^H)\,dr\Bigr\|_{L_p(\Omega)}\le N\sqrt p\|f\|_{\C^\alpha} (t-s)^{H\alpha+1}.
\end{equation}
\end{Lemma}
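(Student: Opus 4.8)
The plan is to apply the new stochastic sewing lemma, Theorem~\ref{thm:Sewing-lemma-better-p}, to the process $\A_t:=\int_0^t f(B^H_r)\,dr$. Since $f\in\C^\infty$ (so in particular bounded), $\A$ is a well-defined $\bF$-adapted process, and for $(s,t)\in[0,1]_\leq$ we have $\A_{s,t}=\int_s^t f(B^H_r)\,dr$. The whole point is that the conclusion \eqref{mainresult} does not involve $C_1$, so the crude bounds coming from $\|f\|_{\C^0}$ that one would use to verify \eqref{ass1} and \eqref{ass11} are harmless; the rate will be extracted entirely from the constants $C_2$ in \eqref{ass1.1} and $C_3$ in \eqref{ass2}, both of which should scale like $\|f\|_{\C^\alpha}$ times an appropriate power of $|t-s|$.

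The key computations are the following. For \eqref{ass1.1}, using Proposition~\ref{prop:fractional}~\eqref{prop 2} we write $\E^s f(B^H_r)=\cP_{c^2(s,r)}f(\E^s B^H_r)$, so
\begin{equ}
\|\E^s\A_{s,t}\|_{L_p(\Omega)}=\Bigl\|\int_s^t \cP_{c^2(s,r)}f(\E^sB^H_r)\,dr\Bigr\|_{L_p(\Omega)}\le \int_s^t \|\cP_{c^2(s,r)}f\|_{\C^0}\,dr\le N\|f\|_{\C^\alpha}\int_s^t (r-s)^{H\alpha}\,dr,
\end{equ}
where the last step uses Proposition~\ref{prop:HK}~\eqref{eq:HK bound2} with $\beta=0$ (recall $\alpha\le 0$) together with $c^2(s,r)\sim (r-s)^{2H}$. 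Since $H\alpha>-1/2>-1$ by the hypothesis $\alpha>-1/(2H)$, this integral is $N\|f\|_{\C^\alpha}(t-s)^{H\alpha+1}$, giving \eqref{ass1.1} with $\eps_2=H\alpha+1$ and $C_2=N\|f\|_{\C^\alpha}$. For \eqref{ass2}, by Proposition~\ref{prop:fractional}~\eqref{prop H} the increment $\E^s B^H_t-\E^u B^H_t$ is independent of $\F^s$ and $\mathcal N(0,v(s,u,t))$-distributed; hence, after conditioning on $\F^u$ inside the double integral defining $(\E^s\A_{u,t}-\E^u\A_{u,t})^2$ and applying Proposition~\ref{prop:HK}~\eqref{randomresult} (with $\beta$ replaced appropriately, $x=\E^u B^H_r$, $\xi$ the Gaussian shift), one bounds the typical term $\|\cP_{c^2(u,r)}f(\E^uB^H_r+\xi)-\cP_{c^2(u,r)}f(\E^uB^H_r)\|_{L_p}$ by $N\|f\|_{\C^\alpha}(u-s)^{1/2}(r-u)^{H\alpha-1/2}$ (the exponent $(H\alpha-1/2)\wedge 0$ is automatically $H\alpha-1/2$ since $\alpha\le 0$). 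Squaring, integrating $r,r'$ over $[u,t]^2$, and using $H\alpha-1/2>-1$ again yields a bound of the form $N\|f\|_{\C^\alpha}^2(u-s)|t-u|^{2(H\alpha+1/2)}$, i.e.\ \eqref{ass2} with $\eps_3=2H\alpha+1$ and $C_3=N\|f\|_{\C^\alpha}^2$. Conditions \eqref{ass1} and \eqref{ass11} hold with $m=2$ and some $\eps_1>0$ simply because $\|f\|_{\C^0}<\infty$ gives $\|\A_{s,t}\|_{L_q}\le \|f\|_{\C^0}|t-s|$ and, via Proposition~\ref{prop:fractional}~\eqref{prop 5} and \eqref{eq:HK bound2}, the conditional increments are likewise Hölder in the relevant sense; the precise value of $\eps_1$ is irrelevant.

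Plugging these into \eqref{mainresult} gives
\begin{equ}
\|\A_t-\A_s\|_{L_p(\Omega)}\le N\|f\|_{\C^\alpha}|t-s|^{H\alpha+1}+N\sqrt p\,\|f\|_{\C^\alpha}|t-s|^{1/2+(2H\alpha+1)/2}=N(1+\sqrt p)\|f\|_{\C^\alpha}|t-s|^{H\alpha+1},
\end{equ}
and absorbing the $1$ into $\sqrt p$ (valid since $p\ge 2$) yields exactly \eqref{ourbound}. The main obstacle I anticipate is the careful verification of \eqref{ass2}: one must handle the nested conditional expectations correctly — first rewriting $\E^s\A_{u,t}-\E^u\A_{u,t}=\int_u^t(\E^s-\E^u)f(B^H_r)\,dr$ in terms of $\cP$ acting on $\E^uB^H_r$ shifted by the $\F^s$-independent Gaussian $\E^sB^H_r-\E^uB^H_r$, then expanding the square as a double integral over $[u,t]^2$ and applying the $L_{p/2}$ bound term by term via Minkowski. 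One should also double-check the integrability at the singular endpoint $r=u$, which is exactly where the condition $\alpha>-1/(2H)$ is used (it is equivalent to $H\alpha-1/2>-1$), and confirm that the $(u-s)^{1/2}$ factor from \eqref{randomresult} matches the $(u-s)$ required on the right-hand side of \eqref{ass2} after squaring.
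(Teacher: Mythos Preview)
Your overall strategy---applying Theorem~\ref{thm:Sewing-lemma-better-p} to $\A_t=\int_0^t f(B^H_r)\,dr$ and reading off \eqref{ourbound} from \eqref{mainresult}---is exactly the paper's approach, and your treatment of \eqref{ass1.1} is correct. But there are two genuine gaps in your plan for the remaining conditions.

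First, your ``typical term'' for \eqref{ass2} is not the right object. The difference $(\E^s-\E^u)f(B^H_r)$ equals $\cP_{c^2(s,r)}f(\E^sB^H_r)-\cP_{c^2(u,r)}f(\E^uB^H_r)$, which changes \emph{both} the spatial argument and the heat-kernel time; it cannot be written as $\cP_{c^2(u,r)}f(\E^uB^H_r+\xi)-\cP_{c^2(u,r)}f(\E^uB^H_r)$ for any $\xi$. The paper splits it as $I_1+I_2$, where $I_1$ keeps the time $c^2(s,r)$ and shifts the argument by the $\F_s$-independent Gaussian $\E^uB^H_r-\E^sB^H_r$ (bounded via Proposition~\ref{prop:HK}\eqref{randomresult} after conditioning on $\F_s$, not $\F_u$: the base point must be $\E^sB^H_r$, which is $\F_s$-measurable, so that the resulting function $g_r(\E^sB^H_r)$ can be pulled through $\E^s$), and $I_2$ keeps the argument $\E^uB^H_r$ and changes the time from $c^2(s,r)$ to $c^2(u,r)$ (bounded pathwise via Proposition~\ref{prop:HK}\eqref{eq:diffnewbound}). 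Your plan omits $I_2$ entirely, and your application of \eqref{randomresult} with the random base point $x=\E^uB^H_r$ is not what that proposition allows.

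Second, the choice $m=2$ does not satisfy \eqref{ass11}. The sharpest bound one obtains (and only after the $I_1+I_2$ argument above, not from $\|f\|_{\C^0}$ alone) is $\|\E^s\A_{u,t}-\E^u\A_{u,t}\|_{L_m}\le N\|f\|_{\C^\alpha}(u-s)^{1/2}$, and exponent $1/2$ is not strictly greater than $1/m=1/2$. The paper takes $m=4$, for which $1/2=1/4+\eps_1$ with $\eps_1=1/4$ does the job; the same $I_1+I_2$ estimates in $L_4$ simultaneously verify \eqref{ass11} and, after squaring, \eqref{ass2}.
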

\begin{Remark}
Note that the right--hand side of bound \eqref{ourbound} depends only on the norm of $f$ in $\C^\alpha$ and does not depend on the norm of $f$ in other H\"older spaces.
\end{Remark}

\begin{proof}[Proof of Lemma~\ref{L:Daviebound}]

Fix $p\ge2$. We will apply Theorem~\ref{thm:Sewing-lemma-better-p} to the process
$$
\A_{t}:=\int_0^t f(B_r^H)\,dr, \quad t\in[0,1].
$$
As usual, we write $\A_{s,t}:=\A_t-\A_s$. Let us check that all the conditions of that theorem hold with $m=4$

It is very easy to see that
$$
\|\A_{s,t}\|_{L_{p\vee 4}(\Omega)}\le \|f\|_{\C^0}|t-s|.
$$
Thus \eqref{ass1} holds. By Proposition~\ref{prop:fractional} \eqref{prop 2} and Proposition~\ref{prop:HK} \eqref{eq:HK bound2} we have for some $N_1=N_1(d,\alpha,H)$ (recall that by assumptions $\alpha\le 0$)
\begin{equation}\label{thankyouverymuch}
|\E^s \A_{s,t}|\le \int_s^t |P_{c^2(s,r)}f(\E^s B_r^H)|dr\le N_1\|f\|_{\C^\alpha}(t-s)^{H\alpha+1}.
\end{equation}
Hence
$$
\|\E^s \A_{s,t}\|_{L_p(\Omega)}\le N_1\|f\|_{\C^\alpha} (t-s)^{H\alpha+1}
$$
and condition \eqref{ass1.1} is met. We want to stress here that the constant $N_1$ here \emph{does not} depend on $p$ (this happens thanks to the a.s. bound \eqref{thankyouverymuch}; it will be crucial later in the proof)

Thus, it remains to check conditions \eqref{ass11} and \eqref{ass2}. Fix $0\le s\le u\le t\le 1$. Using Proposition~\ref{prop:fractional} \eqref{prop 2}, we get
\begin{align}\label{regularitylemmanew}
\E^s\A_{u,t}-\E^u\A_{u,t}&=\int_u^t \bigl(P_{c^2(s,r)}f(\E^s B_r^H)-
P_{c^2(u,r)}f(\E^u B_r^H)\bigr)\,dr\nn\\
&=\int_u^t \bigl(P_{c^2(s,r)}f(\E^s B_r^H)-
P_{c^2(s,r)}f(\E^u B_r^H)\bigr)\,dr\nn\\
&\phantom{\le}+\int_u^t \bigl(P_{c^2(s,r)}f(\E^u B_r^H)-
P_{c^2(u,r)}f(\E^u B_r^H)\bigr)\,dr\nn\\
&=: I_1+I_2.
\end{align}
Note that by Proposition~\ref{prop:fractional} \eqref{prop H},
the random vector $\E^u B_r^H-\E^s B_r^H$ is independent of $\F^s$. Taking this into account and applying the conditional Minkowski inequality, we get
\begin{align}\label{regularitylemmanewI1}
\Bigl(\E^s |I_1|^4\Bigr)^{\frac14}&\le \int_u^t \Bigl(\E^s\bigl[P_{c^2(s,r)}f(\E^s B_r^H)-
P_{c^2(s,r)}f(\E^u B_r^H)\bigr]^4\Bigr)^{\frac14}\,dr\nn\\
&\le \int_u^t g_r(\E^s B_r^H)\,dr,
\end{align}
where for $x\in\R^d$, $r\in[u,t]$  we denoted
\begin{equation*}
g_r(x):=\|P_{c^2(s,r)}f(x)-P_{c^2(s,r)}f(x+\E^u B_r^H-\E^s B_r^H)\|_{L_4(\Omega)}.
\end{equation*}
By Proposition~\ref{prop:fractional} \eqref{prop H},
the random vector $\E^u B_r^H-\E^s B_r^H$ is Gaussian and consists of $d$ independent components with each component of mean $0$ and variance $v(s,u,t)$ (recall its definition in \eqref{funcitonv}). Hence Proposition~\ref{prop:HK} \eqref{randomresult} yields now for some $N_2=N_2(d,\alpha,H)$ and all $x\in\R^d$, $r\in[u,t]$
\begin{equation*}
g_r(x)\le N_2\|f\|_{\C^\alpha}(u-s)^{\frac12}(r-u)^{H\alpha-\frac12}.
\end{equation*}
Substituting this into \eqref{regularitylemmanewI1}, we finally get
\begin{equation}\label{regularitylemmanewI1f}
\Bigl(\E^s |I_1|^4\Bigr)^{\frac14}\le N_2\|f\|_{\C^\alpha}(u-s)^{\frac12}\int_u^t (r-u)^{H\alpha-\frac12}\,dr\le  N_3\|f\|_{\C^\alpha}(u-s)^{\frac12}
(t-u)^{H\alpha+\frac12},
\end{equation}
for some $N_3=N_3(d,\alpha,H)$ where we used that, by assumptions, $H\alpha-1/2>-1$.

Similarly, using Proposition~\ref{prop:HK} \eqref{eq:diffnewbound} with $\beta=0$, we get
for some $N_4=N_4(d,\alpha,H)$
\begin{equation}\label{regularitylemmanewI2}
|I_2|\le N\|f\|_{\C^\alpha}(u-s)^{\frac12}\int_u^t (r-u)^{H\alpha-\frac12}\,dr\le  N_4\|f\|_{\C^\alpha}(u-s)^{\frac12}
(t-u)^{H\alpha+\frac12},
\end{equation}
where again we used that, by assumptions, $H\alpha-1/2>-1$. We stress that both $N_3$, $N_4$ \emph{do not} depend on $p$.

Now to verify \eqref{ass11}, we note that by \eqref{regularitylemmanew}, \eqref{regularitylemmanewI1f},\eqref{regularitylemmanewI2}, we have
\begin{align}\label{regularitylemmanewL4}
\|\E^s\A_{u,t}-\E^u\A_{u,t}\|_{L_4(\Omega)}&\le \|I_1\|_{L_4(\Omega)}+
\|I_2\|_{L_4(\Omega)}\nn\\
&\le \bigl(\E [\E^s|I_1|^4]\bigr)^{\frac14}+\|I_2\|_{L_4(\Omega)}\nn\\
&\le (N_3+N_4)\|f\|_{\C^\alpha}(u-s)^{\frac12}.
\end{align}
 Thus, condition \eqref{ass11} holds.

In a similar manner we check \eqref{ass2}. We have
\begin{align*}
\E^s[|\E^s\A_{u,t}-\E^u\A_{u,t}|^2]&\le 2 \E^s|I_1|^2 +2 \E^s|I_2|^2\le 2\bigl(\E^s|I_1|^4\bigr)^{1/2} +2 \E^s|I_2|^2\\
&\le 2(N_3^2+N_4^2)\|f\|_{\C^\alpha}^2(u-s)(t-u)^{2H\alpha+1}.
\end{align*}
Thus,
\begin{equation*}
\bigl\|\E^s[|\E^s\A_{u,t}-\E^u\A_{u,t}|^2]\bigr\|_{L_{p/2}(\Omega)}\le 2(N_3^2+N_4^2)\|f\|_{\C^\alpha}^2(u-s)(t-u)^{2H\alpha+1}
\end{equation*}
and the constant $2(N_3^2+N_4^2)$ does not depend on $p$. Therefore condition \eqref{ass2} holds.

Thus all the conditions of Theorem~\ref{thm:Sewing-lemma-better-p} hold. The statement of the theorem follows now from \eqref{mainresult}.
\end{proof}

To establish the regularization bound we need the following simple corollary of the above lemma.
\begin{Corollary}\label{L:Davieboundholder}
Let $H\in(0,1)$, $\delta\in(0,1]$, $\alpha-\delta\in(-1/(2H),0]$. Let $f\in\C^\infty$. Then there exists a constant $N=N(d,\alpha,H,\delta)$ such that for any $p\ge2$, $s,t\in[0,1]$, $x,y\in\R^d$ we have
\begin{equation}\label{ourboundholder}
\Bigl\|\int_s^t (f(B_r^H+x)-f(B_r^H+y))\,dr\Bigr\|_{L_p(\Omega)}\le N\sqrt p\|f\|_{\C^\alpha} (t-s)^{H(\alpha-\delta)+1}|x-y|^\delta.
\end{equation}
\begin{proof}
Fix $x,y\in \R^d$. Consider a function $g(z):=f(z+x)-f(z+y)$, $z\in\R^d$. Then, by Lemma~\ref{L:Daviebound}
\begin{align*}
\Bigl\|\int_s^t (f(B_r^H+x)-f(B_r^H+y))\,dr\Bigr\|_{L_p(\Omega)}&=
\Bigl\|\int_s^t g(B_r^H)\,dr\Bigr\|_{L_p(\Omega)}\\
&\le N\sqrt p\|g\|_{\C^{\alpha-\delta}} (t-s)^{H(\alpha-\delta)+1}.
\end{align*}
The corollary follows now immediately from Proposition~\ref{P:Holdernorms}.
\end{proof}
\end{Corollary}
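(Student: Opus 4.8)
\textbf{Proof proposal for Corollary~\ref{L:Davieboundholder}.}

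The plan is to reduce the two-point increment estimate \eqref{ourboundholder} directly to the single-point estimate \eqref{ourbound} of Lemma~\ref{L:Daviebound}, by absorbing the shifts $x$ and $y$ into the test function. First I would fix $x,y\in\R^d$ and introduce the auxiliary function $g\colon\R^d\to\R$ defined by $g(z):=f(z+x)-f(z+y)$. Since $f\in\C^\infty$, the function $g$ is again smooth, so Lemma~\ref{L:Daviebound} applies to it verbatim. Substituting $z=B^H_r$ we have the pointwise identity $f(B^H_r+x)-f(B^H_r+y)=g(B^H_r)$, and therefore
\begin{equ}
\Bigl\|\int_s^t\bigl(f(B^H_r+x)-f(B^H_r+y)\bigr)\,dr\Bigr\|_{L_p(\Omega)}
=\Bigl\|\int_s^t g(B^H_r)\,dr\Bigr\|_{L_p(\Omega)}.
\end{equ}

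Next I would apply Lemma~\ref{L:Daviebound} with the exponent $\alpha-\delta$ in place of $\alpha$; this is legitimate precisely because the hypothesis of the corollary is $\alpha-\delta\in(-1/(2H),0]$, which is exactly the condition needed on the H\"older exponent in Lemma~\ref{L:Daviebound}. This yields
\begin{equ}
\Bigl\|\int_s^t g(B^H_r)\,dr\Bigr\|_{L_p(\Omega)}\le N\sqrt p\,\|g\|_{\C^{\alpha-\delta}}(t-s)^{H(\alpha-\delta)+1},
\end{equ}
with $N=N(d,\alpha,H,\delta)$ (the dependence on $\delta$ entering only through the shifted exponent $\alpha-\delta$).

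Finally, it remains to estimate $\|g\|_{\C^{\alpha-\delta}}$ in terms of $\|f\|_{\C^\alpha}$ and $|x-y|$. Writing $g(z)=\bigl(f(z+(x-y))-f(z)\bigr)$ after the harmless translation $z\mapsto z-y$ (which does not change the $\C^{\alpha-\delta}$ norm, as that norm is translation invariant), Proposition~\ref{P:Holdernorms} applied with the vector $x-y$ and the loss of regularity $\delta$ gives $\|g\|_{\C^{\alpha-\delta}}\le N|x-y|^\delta\|f\|_{\C^\alpha}$. Combining the three displays yields \eqref{ourboundholder}. The argument is entirely routine; the only point requiring a moment's care is checking that the range condition $\alpha-\delta\in(-1/(2H),0]$ makes both the invocation of Lemma~\ref{L:Daviebound} and of Proposition~\ref{P:Holdernorms} (which needs $\delta\in[0,1]$) legitimate, and matching the constants' dependencies. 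I do not expect any genuine obstacle here.
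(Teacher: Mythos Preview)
Your proposal is correct and follows essentially the same approach as the paper's proof: define $g(z)=f(z+x)-f(z+y)$, apply Lemma~\ref{L:Daviebound} at regularity $\alpha-\delta$, and then invoke Proposition~\ref{P:Holdernorms} to bound $\|g\|_{\C^{\alpha-\delta}}$ by $N|x-y|^\delta\|f\|_{\C^\alpha}$. The only cosmetic difference is that you make the translation-invariance step explicit, which the paper leaves implicit.
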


The next lemma provides a pathwise version of bound \eqref{ourboundholder}. It also allows to replace fractional Brownian motion by fractional Brownian motion with a drift.
\begin{Lemma}\label{L:asbound}
Let $H\in(0,1)$, $\alpha>1-1/(2H)$, $\alpha\in[0,1]$, $f\in\C^\infty$.  Let $\psi\colon\Omega\times[0,1]\to\R^d$ be an $\mathbb{F}$--adapted process such that 
$\psi_0$ is deterministic and for some $R>0$
\begin{equation}\label{psicond0}
\|\psi\|_{\C^1([0,1],\R^d)}\le R,\quad a.s.
\end{equation}
Suppose that for some $\rho>H+1/2$ we have for any $\lambda>0$
\begin{equation}\label{psicond}
\E \exp\big(\lambda\|\psi\|^2_{\C^{\rho}([0,1],\R^d)}\big)=:G(\lambda)<\infty.
\end{equation}
Then for any $M>0$, $\eps>0$, $\eps_1>0$ there exists a constant $N=N(d,\alpha,H,\eps,\eps_1,G,R,M)$ and a random variable $\xi$ finite almost everywhere such that for any $s,t\in[0,1]$, $x,y\in\R$, $|x|, |y|\le M$ we have
\begin{equation}\label{ourboundas}
\Bigl|\int_s^t (f(B_r^H+\psi_r+x)-f(B_r^H+\psi_r+y))\,dr\Bigr|\le \xi \|f\|_{\C^\alpha} (t-s)^{H(\alpha-1)+1-\eps}|x-y|
\end{equation}
and
\begin{equation}\label{finsecondmoment}
\E \exp(\xi^{2-\eps_1})<N<\infty.
\end{equation}
\end{Lemma}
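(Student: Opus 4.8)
The plan is to derive the pathwise, all-$(x,y)$ bound \eqref{ourboundas} from the $L_p$ bound \eqref{ourboundholder} of Corollary~\ref{L:Davieboundholder} (applied after a Girsanov change of measure that turns $B^H_\cdot+\psi_\cdot$ into a fractional Brownian motion), by a Kolmogorov/chaining argument in the variables $s,t,x,y$, tracking the $\sqrt p$ dependence carefully so that the resulting tail is stretched-exponential of order $2$ rather than polynomial. First I would fix the drift: since $\psi$ is $\bF$-adapted with $\|\psi\|_{\C^1}\le R$ a.s.\ and $\E\exp(\lambda\|\psi\|_{\C^\rho}^2)<\infty$ for $\rho>H+1/2$, Lemma~\ref{L:annoyingintegral} (estimate \eqref{notannoyingintegralbound}) bounds the integral in \eqref{intassump} by $N[\psi]_{\C^\rho}^2$, whose exponential moments are finite; hence Proposition~\ref{Pr:Girsanov} applies with $u=\dot\psi$ (using $H\le 1/2$ trivially, or the condition (ii) when $H>1/2$), giving an equivalent measure $\wt\P$ under which $B^H+\psi$ is a fractional Brownian motion, with $\E(d\P/d\wt\P)^\lambda\le \exp(\lambda^2 NR^2)\,\E[\exp(\lambda N[\psi]_{\C^\rho}^2)]<\infty$ for all $\lambda$. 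Then for fixed $s,t,x,y$ I apply Hölder under $\wt\P$ and Corollary~\ref{L:Davieboundholder} (with $\delta=1$, so one needs $\alpha-1\in(-1/(2H),0]$, which is exactly the hypothesis $\alpha>1-1/(2H)$, and $\alpha\le 1$) to get, for all $p\ge 2$,
\begin{equ}
\Bigl\|\int_s^t\bigl(f(B^H_r+\psi_r+x)-f(B^H_r+\psi_r+y)\bigr)\,dr\Bigr\|_{L_p(\Omega)}\le N\sqrt p\,\|f\|_{\C^\alpha}(t-s)^{H(\alpha-1)+1}|x-y|,
\end{equ}
where $N$ now also depends on the Girsanov density through a term like $\E(d\P/d\wt\P)^2$; the key point is the $\sqrt p$ growth.

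Next I would set up the chaining. Write $F(s,t,x,y)$ for the left side above. By the triangle inequality in the midpoint $u$ and in an auxiliary point $z$, $F$ is additive up to errors in each slot, so it suffices to control a Kolmogorov-type quantity: one shows that the random field $(s,t,x)\mapsto \int_s^t f(B^H_r+\psi_r+x)\,dr$ has increments in $s,t$ and in $x$ whose $L_p$ norms are bounded by $\sqrt p$ times appropriate powers. For the $x$-increment one uses exactly the estimate above with slot $|x-y|$; for the $s,t$-increments the crude bound $\|f\|_{\C^0}|t-s|$ combined with the regularization bound controls the Hölder-in-time seminorm of exponent $H(\alpha-1)+1-\eps$. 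Because every bound is of the form $N\sqrt p\cdot(\text{deterministic})$, the standard Garsia--Rodemich--Rumsey / Kolmogorov continuity argument produces a random constant $\xi$ with $\|\xi\|_{L_p}\le N\sqrt p$ for all $p\ge 2$. A random variable with $\|\xi\|_{L_p}\le C\sqrt p$ for all $p$ has a Gaussian-type tail, hence $\E\exp(c\xi^2)<\infty$ for small $c$, and in particular $\E\exp(\xi^{2-\eps_1})<\infty$; a short renormalization of $\xi$ (absorbing constants) then yields \eqref{finsecondmoment}. One also has to note that since $f\in\C^\infty$ the field is genuinely continuous so the Kolmogorov/GRR machinery applies directly, and that the restriction $|x|,|y|\le M$ makes the $x$-domain compact so the chaining constant depends on $M$.

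The main obstacle I expect is \emph{keeping the $\sqrt p$-scaling intact through the chaining}: Kolmogorov's continuity theorem as usually stated loses track of the precise $p$-dependence (it gives a random constant whose $L_p$ norm grows like $p$ or worse if one is careless), which would only give $\E\exp(\xi^{1})<\infty$ and not the order-$2$ stretched exponential \eqref{finsecondmoment}. To get the sharp exponent one must use a quantitative GRR inequality (or a dyadic chaining done by hand) in which the $L_p(\Omega)$ norm of the modulus is estimated term by term, so that the overall constant inherits only the $\sqrt p$ factor coming from each application of the regularization bound, plus a $p$-independent geometric series from summing over dyadic scales. Two secondary technical points: one must take a union over both the time increments and the spatial increments (a field in three real parameters $s,t,x$), choosing Hölder exponents strictly below the critical ones — this is where the $\eps$ in \eqref{ourboundas} and the $\eps_1$ in \eqref{finsecondmoment} are spent; and one should double-check that the Girsanov constant contributes only a fixed multiplicative factor (it does, since $\E(d\P/d\wt\P)^2$ is a finite number depending on $R,G,H$), not a $p$-dependent one, so that it does not spoil the $\sqrt p$ growth.
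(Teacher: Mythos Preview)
Your overall strategy matches the paper's: Girsanov plus Corollary~\ref{L:Davieboundholder} give $L_p$ bounds of size $N\sqrt p$ (with $N$ independent of $p$), a Kolmogorov/chaining argument upgrades these to an a.s.\ bound with a random constant $\xi$ satisfying $\|\xi\|_{L_p}\le N\sqrt p$, and then Stirling's formula gives the stretched-exponential moment \eqref{finsecondmoment}. Your worry about tracking the $\sqrt p$ through Kolmogorov is legitimate; the paper handles it by quoting a version with explicit constants (\cite[Theorem~1.4.1]{Kunita}). Your observation that the Girsanov density enters only as a factor $(\E\, d\P/d\wt\P)^{1/(2p)}$ and therefore does not spoil the $\sqrt p$ scaling is also correct. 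The paper happens to order the two steps differently---first proving the $\psi\equiv 0$ case pathwise, then transferring via Girsanov at the level of the a.s.\ bound---but your ordering works too.

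There is, however, one genuine gap. A direct Kolmogorov/GRR argument on the field $(t,x)\mapsto\int_0^t f(B^H_r+\psi_r+x)\,dr$, or on its rectangle increments $F(s,t,x,y)$, necessarily loses an $\eps$ in \emph{both} the time and the space exponent: from the input bound $\|F(s,t,x,y)\|_{L_p}\le N\sqrt p\,(t-s)^{H(\alpha-1)+1}|x-y|$ you can only conclude an a.s.\ estimate of the form $\xi(t-s)^{H(\alpha-1)+1-\eps}|x-y|^{1-\eps'}$, not the exact factor $|x-y|$ asserted in \eqref{ourboundas}. The paper avoids this by using the hypothesis $f\in\C^\infty$ to factor out $(x-y)$ \emph{exactly} via the fundamental theorem of calculus:
\[
\int_s^t\bigl(f(B^H_r+x)-f(B^H_r+y)\bigr)\,dr=(x-y)\cdot\int_0^1\bigl(F(t,\theta x+(1-\theta)y)-F(s,\theta x+(1-\theta)y)\bigr)\,d\theta,
\]
where $F(t,z):=\int_0^t\nabla f(B^H_r+z)\,dr$. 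One then only needs an a.s.\ time-H\"older bound on $|F(t,z)-F(s,z)|$ that is \emph{uniform} over $z$ in the compact ball of radius $M$. For this, Lemma~\ref{L:Daviebound} controls the time increment and Corollary~\ref{L:Davieboundholder} with some small $\delta>0$ gives H\"older continuity in $z$; the $z$-exponent produced by Kolmogorov is then used only to secure continuity (hence uniform boundedness on compacts) and its precise value is irrelevant to the final bound. This FTC step is the missing ingredient in your sketch, and without it the lemma as stated---with the linear factor $|x-y|$ rather than $|x-y|^{1-\eps'}$---does not follow.
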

\begin{proof}
First we consider the case $\psi\equiv0$. Fix $\eps,\eps_1>0$.  By the fundamental theorem of calculus we observe that for any $x,y\in\R^d$, $0\le s \le t \le 1$
\begin{equation}\label{fequality}
\int_s^t (f(B_r^H+x)-f(B_r^H+y))\,dr=(x-y)\cdot\int_0^1\int_s^t
 \nabla f(B_r^H+\theta x+(1-\theta)y)\,dr\,d\theta.
\end{equation}
Consider the process
\begin{equation*}
F(t,z):=\int_0^t \nabla f(B_r^H+z)\,dr.
\end{equation*}
Take $\delta>0$ such that $\alpha-1-\delta>1/(2H)$.
By Lemma~\ref{L:Daviebound} and Corollary~\ref{L:Davieboundholder},  there exists $N_1=N_1(\alpha,d,H,\delta)$ such that for any $p\ge2$, $s,t\in [0,1]$, $x,y\in\R^d$ we have 
\begin{align*}
\|F(t,x)-F(s,y)\|_{L_p(\Omega)}&\le \|F(t,x)-F(s,x)\|_{L_p(\Omega)}+\|F(s,x)-F(s,y)\|_{L_p(\Omega)}\\
&\le N_1\sqrt p\|\nabla f\|_{\C^{\alpha-1}}((t-s)^{H(\alpha-1)+1}+|x-y|^\delta).
\end{align*}
We stress that $N_1$ does not depend on $p$. Taking into account that the process $F$ is continuous (because $f\in\C^\infty)$, we derive from the above bound and the Kolmogorov continuity theorem (\cite[Theorem 1.4.1]{Kunita}) that for any $p$ large enough one has
\begin{equation}
\sup_{\substack{x,y\in\R^d, |x|,|y|\le M\\s,t\in[0,1]}}
\frac{|F(t,x)-F(s,y)|}{(t-s)^{H(\alpha-1)+1-\eps}+|x-y|^{\delta/2}}=: \xi\| f\|_{\C^{\alpha}}<\infty\,\,a.s.,
\end{equation}
and $\|\xi\|_{L_p(\Omega)}\le NN_1\sqrt{p}$, where $N=N(\alpha,d,H,\delta,\eps,M)$. Since $N$ and $N_1$ do not depend on $p$, we see that by the Stirling formula
\begin{equation}\label{secondmomentexpxi}
\E \exp(\xi^{2-\eps_1})=\sum_{n=0}^\infty  \frac{\E\xi^{n(2-\eps_1)}}{n!}\le
\sum_{n=0}^\infty  \frac{(NN_1)^{n(2-\eps_1)}n^{n(1-\eps_1/2)}}{n!}<\infty
\end{equation}
Therefore we obtain from \eqref{fequality} that for any $x,y\in\R^d$, $|x|,|y|\le M$ we have
\begin{align}\label{prelimfinboundcor}
\nn
 &\Bigl|\int_s^t (f(B_r^H+x)-f(B_r^H+y))\,dr\Bigr|
\\
 & \qquad \qquad \le   |x-y|\int_0^1
|(F(t,\theta x+(1-\theta)y)-F(s,\theta x+(1-\theta)y))|\,d\theta\nn\\
 &   \qquad \qquad \le \xi\| f\|_{\C^{\alpha}} (t-s)^{H(\alpha-1)+1-\eps}|x-y|.
\end{align}

Now we consider the general case. Assume that the function $\psi$ satisfies \eqref{psicond}. Then by Proposition~\ref{Pr:Girsanov}, bound \eqref{notannoyingintegralbound} and assumption \eqref{psicond} the process
\begin{equation*}
\wt B_t:=B_t+\psi_t-\psi_0
\end{equation*}
is a fractional Brownian motion with Hurst parameter $H$ under some probability measure $\wt\P$ equivalent to $\P$. This yields from \eqref{prelimfinboundcor} (we apply this bound with $M+|\psi_0|$ in place of $M$)
\begin{align*}
\Bigl|\int_s^t (f(B_r^H+\psi_r+x)-f(B_r^H+\psi_r+y))\,dr\Bigr|&
=\Bigl|\int_s^t (f(\wt B_r^H+x+\psi_0)-f(\wt B_r^H+y+\psi_0))\,dr\Bigr|\\
&\le  \eta\| f\|_{\C^{\alpha}} |x-y|
\end{align*}
where $\eta$ is a random variable with $\E^{\wt \P} \exp(\eta^{2-\eps_1})<\infty$. Note that we have used here our assumption that $\psi_0$ is non-random. The latter implies that for any $\eps_2>\eps_1$
\begin{align*}
\E^{\P} \exp(\eta^{2-\eps_2})&=\E^{\wt \P}\Bigl[ \exp(\eta^{2-\eps_2})
\frac{d \P}{d \wt \P}\Bigr]\\
&\le \Bigl(\E^{\wt \P} \exp(2\eta^{2-\eps_2})\Bigr)^{1/2}
\Bigl(\E^{ \P}\frac{d\P}{d\wt \P} \Bigr)^{1/2}\\
&\le \Bigl(\E^{\wt \P} \exp(2\eta^{2-\eps_2})\Bigr)^{1/2}
e^{NR}\E^{ \P} \exp(N \|\psi\|^2_{\C^{\rho}([0,1],\R^d)})
\end{align*}
where the last inequality follows from \eqref{densitybound}
and \eqref{notannoyingintegralbound}. This concludes the proof of the theorem.
\end{proof}

Now we are ready to present the main result of this subsection, the regularization lemma. 
\begin{Lemma}\label{L:finfin}
Let $H\in(0,1)$, $\alpha>1-1/(2H)$, $\alpha\in[0,1]$, $p\ge2$, $f\in\C^\alpha$, $\eps,\eps_1>0$. Let $\tau\in(H(1-\alpha),1)$. Let $\phi, \psi\colon\Omega\times[0,1]\to\R^d$ be $\mathbb{F}$--adapted processes satisfying  condition  \eqref{psicond0}. Assume that $\psi$ satisfies additionally  \eqref{psicond} for some $\rho>H+1/2$, $\rho\in[0,1]$. Suppose that $\psi_0$ and $\phi_0$ are deterministic.
 
Then there exists a constant $N=N(H,\alpha,p,d,\tau,G,R,\eps,\eps_1)$ such that for any $L>0$,
and any $s,t\in[0,1]$ we have
\begin{align}\label{ourboundasreg}
&\Bigl\|\int_s^t (f(B_r^H+\phi_r)-f(B_r^H+\psi_r))\,dr\Bigr\|_{L_p(\Omega)}\nn\\
&\quad\le
NL \|f\|_{\C^\alpha}
(t-s)^{H(\alpha-1)+1-\eps}\big(\|\phi_s-\psi_s\|_{L_p(\Omega)}+
\|[\phi-\psi]_{C^{\tau}([s,t])}\|_{L_p(\Omega)}(t-s)^\tau\big)\nn\\
&\qquad+ N \|f\|_{\C^0}|t-s|\exp(-L^{2-\eps_1}).
\end{align}
\end{Lemma}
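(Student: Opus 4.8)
The plan is to combine the pathwise Davie-type bound of Lemma~\ref{L:asbound} with the \emph{deterministic} sewing lemma (Proposition~\ref{thm:det-Sewing-lemma}), applied for a.e.\ fixed $\omega$, and then to upgrade the resulting pathwise estimate to an $L_p(\Omega)$ estimate by truncating its random prefactor at the level $L$ and absorbing the complementary event with the trivial bound $\bigl|\int_s^t(f(B^H_r+\phi_r)-f(B^H_r+\psi_r))\,dr\bigr|\le2\|f\|_{\C^0}|t-s|$. Before that I would reduce to $f\in\C^\infty$: replacing $f$ by $\cP_{1/k}f$, which by Proposition~\ref{prop:HK}\eqref{eq:HK bound2} satisfies $\|\cP_{1/k}f\|_{\C^\alpha}\le N\|f\|_{\C^\alpha}$ and clearly $\|\cP_{1/k}f\|_{\C^0}\le\|f\|_{\C^0}$, and using that $B^H,\phi,\psi$ stay in an a.s.\ bounded region by \eqref{psicond0}, one checks that it suffices to prove \eqref{ourboundasreg} for smooth $f$ with a constant independent of $k$.

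Now fix $f\in\C^\infty$ and $s\le t$, write $g:=\phi-\psi$, and for $s\le u\le v\le t$ set $A_{u,v}:=\int_u^v\bigl(f(B^H_r+\psi_r+g_u)-f(B^H_r+\psi_r)\bigr)\,dr$, i.e.\ freeze the increment of $g$ at the left endpoint inside $f$; then $\delta A_{u,v,w}=\int_v^w\bigl(f(B^H_r+\psi_r+g_u)-f(B^H_r+\psi_r+g_v)\bigr)\,dr$. Pick $\eps':=\tfrac{1}{2}\min(\eps,\tau-H(1-\alpha))>0$ — the sole place the hypothesis $\tau>H(1-\alpha)$ is used — and apply Lemma~\ref{L:asbound} with $M=2R$, with $\eps'$ in place of $\eps$, and with $\eps_1/2$ in place of $\eps_1$: this produces a single random variable $\xi$, finite a.s., with $\E\exp(\xi^{2-\eps_1/2})<\infty$, such that simultaneously
$$|A_{u,v}|\le\xi\|f\|_{\C^\alpha}(v-u)^{H(\alpha-1)+1-\eps'}|g_u|,\qquad|\delta A_{u,v,w}|\le\xi\|f\|_{\C^\alpha}(w-v)^{H(\alpha-1)+1-\eps'}|g_u-g_v|.$$
Using $|g_u|\le2R$ and $|g_u-g_v|\le[g]_{\C^\tau([s,t])}(v-u)^\tau$ (both from \eqref{psicond0}) the second estimate becomes $|\delta A_{u,v,w}|\le\xi\|f\|_{\C^\alpha}[g]_{\C^\tau([s,t])}(w-u)^{1+\eps_{\mathrm{sew}}}$ with $\eps_{\mathrm{sew}}:=\tau-H(1-\alpha)-\eps'\in(0,1)$. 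Since $f\in\C^\infty$ is Lipschitz and $g\in\C^1$, the process $F_v:=\int_s^v(f(B^H_r+\phi_r)-f(B^H_r+\psi_r))\,dr$ satisfies $|F_v-F_u-A_{u,v}|\le R\|f\|_{\C^1}(v-u)^2$, hence (as $v-u\le1$) it coincides with the unique function produced by Proposition~\ref{thm:det-Sewing-lemma} from the germ $A$ on $[s,t]$; then \eqref{DSLuniqueness} gives $|F_t|\le|A_{s,t}|+N\xi\|f\|_{\C^\alpha}[g]_{\C^\tau([s,t])}(t-s)^{1+\eps_{\mathrm{sew}}}$, and using $\eps'\le\eps$ together with $(t-s)^{1+\eps_{\mathrm{sew}}}=(t-s)^{H(\alpha-1)+1-\eps'}(t-s)^\tau$ we reach the pathwise bound
$$\Bigl|\int_s^t(f(B^H_r+\phi_r)-f(B^H_r+\psi_r))\,dr\Bigr|\le N\xi\|f\|_{\C^\alpha}(t-s)^{H(\alpha-1)+1-\eps}\bigl(|\phi_s-\psi_s|+[\phi-\psi]_{\C^\tau([s,t])}(t-s)^\tau\bigr).$$

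To conclude, write $Y$ for that integral and $\Xi$ for the bracketed factor times $\|f\|_{\C^\alpha}(t-s)^{H(\alpha-1)+1-\eps}$, so that $|Y|\le N\xi\,\Xi$ pathwise and also $|Y|\le2\|f\|_{\C^0}|t-s|$. For $L>0$ split $Y=Y\mathbf 1_{\{\xi\le L\}}+Y\mathbf 1_{\{\xi>L\}}$: the first part has $L_p(\Omega)$-norm at most $NL\|\Xi\|_{L_p(\Omega)}$, which by Minkowski's inequality is $NL$ times the first line on the right of \eqref{ourboundasreg}; for the second part, Chebyshev's inequality gives $\P(\xi>L)\le Ne^{-L^{2-\eps_1/2}}$, and since $2-\eps_1/2>2-\eps_1$ the map $L\mapsto L^{2-\eps_1}-L^{2-\eps_1/2}/p$ is bounded above on $(0,\infty)$, so $\P(\xi>L)^{1/p}\le N_pe^{-L^{2-\eps_1}}$ and hence $\|Y\mathbf 1_{\{\xi>L\}}\|_{L_p(\Omega)}\le N_p\|f\|_{\C^0}|t-s|e^{-L^{2-\eps_1}}$; adding the two yields \eqref{ourboundasreg}. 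I expect the main difficulty to be the exponent bookkeeping in the sewing step — choosing $\eps'$ small enough that $\eps_{\mathrm{sew}}>0$, which is precisely where $\tau>H(1-\alpha)$ enters and which makes the freezing error integrable via the sewing lemma, while keeping $\eps'\le\eps$ so that the final $(t-s)$-exponent remains $H(\alpha-1)+1-\eps$; a secondary subtlety is that reproducing exactly the tail factor $e^{-L^{2-\eps_1}}$ forces invoking Lemma~\ref{L:asbound} with the strictly smaller stretched exponent $\eps_1/2$.
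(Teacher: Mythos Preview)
Your proof is correct and follows essentially the same route as the paper: freeze $g=\phi-\psi$ at the left endpoint to define the germ $A_{u,v}$, apply Lemma~\ref{L:asbound} pathwise to bound $|A_{u,v}|$ and $|\delta A_{u,v,w}|$, invoke the deterministic sewing lemma, and then truncate the random prefactor $\xi$ at level $L$ with Chebyshev. The only notable differences are cosmetic --- you reduce to smooth $f$ first rather than last, and you are more explicit than the paper about the $\eps_1/2$ trick needed to pass from $\P(\xi>L)^{1/p}$ to $e^{-L^{2-\eps_1}}$; note however that your approximation step ``one checks that it suffices'' is genuinely delicate when $\alpha=0$ (so $f$ need not be continuous), and there the paper uses a Girsanov argument to justify the a.s.\ pointwise convergence of $\cP_{1/k}f(B^H_r+\phi_r)$.
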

\begin{proof}
We begin with assuming further that  $f\in\C^\infty(\R^d,\R^d)$.
Fix $S,T\in[0,1]_{\le}$, $\eps_1>0$. Choose any $\eps>0$ small enough such that
\begin{equation}\label{epscond}
H(\alpha-1)-\eps+\tau>0.
\end{equation}
 
 Let us apply the deterministic sewing lemma (Proposition~\ref{thm:det-Sewing-lemma}) to the process
\begin{equation*}
A_{s,t}:=\int_s^t (f(B_r^H+\psi_r+\phi_s-\psi_s)-f(B_r^H+\psi_r))\,dr,\quad (s,t)\in[S,T]_{\le}.
\end{equation*}
Let us check that all the conditions of the above lemma are satisfied.

First, the process $A$ is clearly continuous, since $f$ is bounded. Then, using Lemma~\ref{L:asbound} with $M:=4R$, we derive that for any $S\le s\le u\le T$ there exists a random variable $\xi$ with $\E \exp(\xi^{2-\eps_1})\le N=N(d,\alpha,H,\eps,\eps_1,G,|\phi_0|,|\psi_0|,R)<\infty$ such that
\begin{align*}
|\delta A_{s,u,t}|&=\Bigl|\int_u^t (f(B_r^H+\psi_r+\phi_u-\psi_u)-f(B_r^H+\psi_r+\phi_s-\psi_s))\,dr\Bigr|\\
&\le\xi \|f\|_{\C^\alpha}|(\psi_u-\phi_u)-(\psi_s-\phi_s)|(t-s)^{H(\alpha-1)+1-\eps}\\
&\le \xi \|f\|_{\C^\alpha}[\psi-\phi]_{\C^\tau([S,T])}(t-s)^{H(\alpha-1)+1-\eps+\tau}.
\end{align*}
Since, by \eqref{epscond}, $H(\alpha-1)+1-\eps+\tau>1$, we see that condition  \eqref{SSLdelta} is satisfied with $C=\xi \|f\|_{\C^\alpha}[\psi-\phi]_{\C^\tau([S,T])}$. Thus, all the conditions of Proposition~\ref{thm:det-Sewing-lemma} hold. By setting now
$$
\tilde\A_t:=\int_s^t (f(B_r^H+\phi_r)-f(B_r^H+\psi_r))\,dr,
$$
we see that for $S\le s\le t\le T$
\begin{align*}
|\tilde\A_t-\tilde\A_s-A_{s,t}|&=\Bigl|\int_s^t (f(B_r^H+\phi_r)-f(B_r^H+\psi_r+\phi_s-\psi_s))\,dr\Bigr|\Bigr|\\
&\le \|f\|_{\C^1}[\psi-\phi]_{\C^\tau([S,T])}|t-s|^{1+\tau}
\\
&\le \|f\|_{\C^1}[\psi-\phi]_{\C^\tau([S,T])}|t-s|^{H(\alpha-1)+1-\eps+\tau}.
\end{align*}
Thus, the process $\tilde\A$ satisfies \eqref{DSLuniqueness} and therefore coincides with $\A$. Proposition~\ref{thm:det-Sewing-lemma} implies now that for any $S\le s\le t\le T$
\begin{align*}
&\Bigl|\int_s^t (f(B_r^H+\phi_r)-f(B_r^H+\psi_r))\,dr\Bigr|\\
&\quad\le |A_{s,t}|+N
\xi \|f\|_{\C^\alpha}[\psi-\phi]_{\C^\tau([S,T])}(t-s)^{H(\alpha-1)+1-\eps+\tau}\\
&\quad\le N\xi \|f\|_{\C^\alpha}(t-s)^{H(\alpha-1)+1-\eps}\bigl(|\psi-\phi|_{\C^0([S,T])}+
[\psi-\phi]_{\C^\tau([S,T])}(t-s)^{\tau}\bigr),
\end{align*}
where the bound on $|A_{s,t}|$ follows again from Lemma~\ref{L:asbound}. By putting in the above bound $s=S$ and $t=T$ and using that $|\psi-\phi|_{\C^0([S,T])}\le  |\psi_S-\phi_S|+ [\psi-\phi]_{\C^\tau([S,T])}(T-S)^{\tau}$, we obtain for 
 $S,T\in[0,1]_{\le}$
\begin{multline*}
\Bigl|\int_S^T (f(B_r^H+\phi_r)-f(B_r^H+\psi_r))\,dr\Bigr|\\
\le N\xi \|f\|_{\C^\alpha}(T-S)^{H(\alpha-1)+1-\eps}\bigl(|\psi_S-\phi_S|+
[\psi-\phi]_{\C^\tau([S,T])}(T-S)^{\tau}\bigr).
\end{multline*}
On the other hand, we have the following trivial bound.
\begin{equation*}
\Bigl|\int_S^T (f(B_r^H+\phi_r)-f(B_r^H+\psi_r))\,dr\Bigr|\le 2\|f\|_{\C^0}|T-S|.
\end{equation*}
Therefore,
\begin{equs}
\Bigl\|&\int_S^T (f(B_r^H+\phi_r)-f(B_r^H+\psi_r))\,dr\Bigr\|_{L_p(\Omega)}
\\
&\leq
\Bigl\|\bone_{\xi\leq L}\int_S^T (f(B_r^H+\phi_r)-f(B_r^H+\psi_r))\,dr\Bigr\|_{L_p(\Omega)}
\\
&\qquad
+\Bigl\|\bone_{\xi\geq L}\int_S^T (f(B_r^H+\phi_r)-f(B_r^H+\psi_r))\,dr\Bigr\|_{L_p(\Omega)}
\\
&\leq LN \|f\|_{\C^\alpha}(T-S)^{H(\alpha-1)+1-\eps}\bigl(\|\psi_S-\phi_S\|_{L_p(\Omega)}+
\|[\psi-\phi]_{\C^\tau([S,T])}\|_{L_p(\Omega)}(T-S)^{\tau}\bigr)
\\
&\qquad+2\big(\P(\xi\geq L)\big)^{1/p}\|f\|_{\C^0}|T-S|.
\end{equs}
By Chebyshev inequality and \eqref{finsecondmoment}, we finally get
\eqref{ourboundasreg} for the case of smooth $f$.

Now we are ready to remove the extra assumption on the smoothness of $f$. Let us set $f_n= \cP_{1/n}f \in \C^\infty$. By applying the statement of the lemma to $f_n$ and using that $\|f_n\|_{\C^
\beta} \leq \| f\|_{\C^\beta}$ for $\beta=\alpha, 0$  we get 

\begin{align}\label{alphazerozero}
& \Bigl\|\int_s^t ( f_n(B_r^H+\phi_r)- f_n(B_r^H+\psi_r))\,dr\Bigr\|_{L_p(\Omega)}\nn
\\
&\quad\le
NL \|f\|_{\C^\alpha}
(t-s)^{H(\alpha-1)+1-\eps}(\|\phi_s-\psi_s\|_{L_p(\Omega)}+
\|[\phi-\psi]_{C^{\tau}([s,t])}\|_{L_p(\Omega)}(t-s)^\tau)\nn\\
&\qquad+ N \|f\|_{\C^0}|t-s|\exp(-L^{2-\eps_1}).
\end{align}

If $\alpha>0$, then $f_n(x) \to f(x)$ for all $x \in \R^d$ and the claim follows by Fatou's lemma. So we only have to consider the case  $\alpha=0$. Clearly, it suffices to show that for each $r>0$, almost surely 
\begin{equs}
( f_n(B_r^H+\phi_r)- f_n(B_r^H+\psi_r)) \to ( f(B_r^H+\phi_r)- f(B_r^H+\psi_r)),
\end{equs}
as $n \to \infty$. Notice that almost surely $f_n(B^H_r) \to f(B^H_r)$ as $n \to \infty$, since the law of $B^H_r$ is absolutely continuous (for $r>0$). Moreover, since $\alpha=0$, we have by assumption that $H< 1/2$. By Proposition \ref{Pr:Girsanov} (recall that $\phi$ satisfies $\eqref{psicond0}$, therefore is Lipschitz)  there exists a neasure equivalent to $\bP$ under which  $B^H+ \phi$ is a fractional brownian motion. Consequently, for all $r >0$, 
almost surely 
 $$
 f_n(B_r^H+\phi_r) \to f(B_r^H+\phi_r),
 $$
as $n \to \infty$. With the same reasoning we obtain that almost surely $f_n(B_r^H+\psi_r) \to f(B_r^H+\psi_r)$. The lemma is now proved. 
\end{proof}

\subsection{Proof of Theorem \ref{thm:main fractional}}\label{S:proof21}
\begin{proof}
Without loss of generality we assume $\alpha\neq1$. Let us denote
\begin{equ}
\psi_t:=x_0+\int_0^t b(X_r)\,dr,\quad\psi^n_t:=x^n_0+\int_0^t b(X^n_{\kappa_n(r)})\,dr,\quad t\in[0,1].
\end{equ}
Fix $\eps>0$ such that
\begin{equation}\label{defeps}
\eps<\frac12+H(\alpha-1).
\end{equation}
By assumption \eqref{eq:exponent main} such $\eps$ exists. Fix now large enough $p\ge2$
such that 
\begin{equation}\label{defp}
d/p<\eps/2.
\end{equation}

Fix $0\leq S\leq T\leq 1$.
Then, taking into account \eqref{DKBound Xn fBM}, for any $S\leq s\leq t\leq T$ we have
\begin{equs}\label{Step1 frac}
\|(\psi_t- \psi_s)-(\psi^n_t- \psi^n_s)\|_{L_p(\Omega)}&=\Bigl\|\int_s^t (b(X_r)-b(X^n_{\kappa_n(r)}))\,dr\Bigr\|_{L_p(\Omega)}\\
&\leq \Bigl\|\int_s^t (b(X_r)-b(X^n_r))\,dr\Bigr\|_{L_p(\Omega)}+N|t-s|^{1/2+\eps} n^{-\gamma+\eps}.
\end{equs}
Let $M\ge1$ be a parameter to be fixed later. We wish to apply Lemma~\ref{L:finfin} with $\psi^n$ in place of $\phi$, $\frac12+H(\alpha-1)-\eps$ in place  of $\eps$, and $\tau:=1/2+\eps/2$. Let us check that all the conditions of this lemma are satisfied. First, we note that by \eqref{defeps} we have $\frac12+H(\alpha-1)-\eps>0$, which is required by the assumptions of the lemma. Second, we note that $1/2+\eps/2>H(1-\alpha)$ thanks to  \eqref{eq:exponent main}, thus this choice of $\tau$ is allowed. Next,
it is clear that $\psi_0$ and $\psi^n_0$ are deterministic. Further, since the function $b$ is bounded, we see $\psi$ and $\psi^n$ satisfy \eqref{psicond0}. Finally, let us verify that $\psi$ satisfies \eqref{psicond}. If $H<1/2$, this condition holds automatically thanks to the boundedness of $b$. If $H\ge 1/2$ then pick $H'\in(0,H)$ such that
\begin{equation}\label{Hprimedef}
\alpha H'>H-\frac12.
\end{equation}
Note that such $H'$ exists thanks to  assumption \eqref{eq:exponent main}. Then, by definition of $\psi$, we clearly have
\begin{equation*}
[\psi]_{\C^{1+\alpha H'}}\le |x_0|+\|b\|_{\C^0}+[b(X_{\cdot})]_{\C^{\alpha H'}}
\le |x_0|+\|b\|_{\C^0}+\|b\|_{\C^0}^{\alpha}+[B^H]_{{\C^{ H'}}}^\alpha.
\end{equation*}
Therefore for any $\lambda>0$ we have
\begin{equation*}
\E e^{\lambda[\psi]_{\C^{1+\alpha H'}}^2}\le N(|x_0|,\|b\|_{\C^0})\E \exp([B^H]_{{\C^{ H'}}}^{2\alpha})<\infty.
\end{equation*}
By taking $\rho:= 1+\alpha H'$ and recalling $\eqref{Hprimedef}$, we see that $\rho>H+1/2$ and thus condition  \eqref{psicond} holds. 
Therefore  all conditions of Lemma~\ref{L:finfin} are met. Applying this lemma, we get
\begin{align*}
&\Bigl\|\int_s^t (b(X_r)-  b(X^n_r))\,dr\Bigr\|_{L_p(\Omega)}\\
&\quad =\Bigl\|\int_s^t (b(B^H_r+\psi_r)-b(B^H_r+\psi^n_r))\,dr\Bigr\|_{L_p(\Omega)}
\\
&\quad\leq M N|t-s|^{\frac12+\eps}\|\psi_S-\psi_S^n\|_{L_p(\Omega)}
\\
&\qquad+ MN|t-s|^{1+3\eps/2}
\|[\psi-\psi^n]_{\C^{1/2+\eps/2}([s,t])}\|_{L_p(\Omega)}+ N \exp(-M^{2-\eps_0})|t-s|\\
&\quad\leq M N|t-s|^{\frac12+\eps}\|\psi_S-\psi_S^n\|_{L_p(\Omega)}
\\
&\qquad+ MN|t-s|^{1+3\eps/2}
\db{\psi-\psi^n}_{\scC^{1/2+\eps}_p,[s,t]}+ N \exp(-M^{2-\eps_0})|t-s|,
\end{align*}
where the last inequality follows from the Kolmogorov continuity theorem and \eqref{defp}. Using this in \eqref{Step1 frac}, dividing by $|t-s|^{1/2+\eps}$ and taking supremum over $S\leq s\leq t\leq T$, we get for some $N_1\ge1$ independent of $M$, $n$
\begin{align}\label{eq:notfinal tau new}
\db{\psi-\psi^n}_{\scC^{1/2+\eps}_p,[S,T]}
\leq& MN_1 \|\psi_S-\psi^n_S\|_{L_p(\Omega)}
+MN_1|T-S|^{1/2+\eps/2}\db{\psi-\psi^n}_{\scC^{1/2+\eps}_p,[S,T]}\nn\\
&+ N_1 n^{-\gamma+\eps}+N_1 \exp(-M^{2-\eps_0}).
\end{align}
Fix now $m$ to be the smallest integer so that $N_1M m^{-1/2-\eps/2}\leq 1/2$ (we stress that $m$ does not depend on $n$). One gets from \eqref{eq:notfinal tau new}
\begin{equ}\label{eq:final tau}
\db{\psi-\psi^n}_{\scC^{1/2+\eps}_p,[S,S+1/m]}
\leq 2M N_1 \|\psi_S-\psi^n_S\|_{L_p(\Omega)} + 2N_1 n^{-\gamma+\eps}+2N_1 \exp(-M^{2-\eps_0}),
\end{equ}
and thus 
\begin{equ}
\|\psi_{S+1/m}-\psi^n_{S+1/m}\|_{L_p(\Omega)}
\leq 2MN_1 \|\psi_S-\psi^n_S\|_{L_p(\Omega)} + 2N_1 n^{-\gamma+\eps}+2N_1 \exp(-M^{2-\eps_0}).
\end{equ}
Starting from $S=0$ and applying the above bound $k$ times, $k=1,\ldots,m$, one can conclude
\begin{align*}
\|\psi_{k/m}-\psi^n_{k/m}\|_{L_p(\Omega)}
&\leq (2MN_1)^k \Bigl(\|\psi_0-\psi^n_0\|_{L_p(\Omega)} + 2N_1 n^{-\gamma+\eps}+
+2N_1 \exp(-M^{2-\eps_0})\Bigr)\\
&\le (2MN_1)^m \Bigl(|x_0-x^n_0| + 2N_1 n^{-\gamma+\eps}
+2N_1 \exp(-M^{2-\eps_0})\Bigr).
\end{align*}
Substituting back into \eqref{eq:final tau}, we get
\begin{align}\label{toootechnical}
\db{\psi-\psi^{n}}_{\scC^{1/2+\eps}_p([0,1])}&\leq m \sup_{k=1,\dots,m}\db{\psi-\psi^{n}}_{\scC^{1/2+\eps}_p([k/m,(k+1)/m])}\nn\\
&\le (2N_1M)^{m+5}\Bigl(|x_0-x_0^n|+N_1 n^{-\gamma+\eps}+N_1 \exp(-M^{2-\eps_0})\Bigr).
\end{align}

It follows from the definition of $m$ that $m\leq 2N_1^2M^{2-\eps}$. At this point we choose $\eps_0=\eps/2$ and note that for some universal constant $N_2$ one has
\begin{equation*}
(2N_1M)^{m+5}=e^{(m+5)\log (2 N_1M)}\le e^{(2N_1^2M^{2-\eps}+5)\log (2 N_1M)}\le N_2 e^{\frac12M^{2-\eps/2}}.
\end{equation*}
Thus, we can continue \eqref{toootechnical} as follows.
\begin{equation}\label{eq:final tau_final}
\db{\psi-\psi^{n}}_{\scC^{1/2+\eps}_p([0,1])}
\le e^{N_3M^{2-\eps}\log M}\Bigl(|x_0-x_0^n|+N_1 n^{-\gamma+\eps}\Bigr)+N_1N_2 \exp(-\frac12M^{2-\eps/2}).
\end{equation}
Fix now $\delta>0$ and choose $N_4=N_4(\delta)$ such that for all $M>0$ one has
$$
\exp(\frac12M^{2-\eps/2})\ge N_4 e^{\delta^{-1}N_3M^{2-\eps}\log M}.
$$
It remains to notice that by choosing $M>1$ such that
\begin{equ}
e^{N_3M^{2-\eps}\log M}= n^{\delta},
\end{equ}
one has 
\begin{equ}
e^{-\frac12M^{2-\eps/2}}\leq N n^{-1}.
\end{equ}
Substituting back to \eqref{eq:final tau_final} and since $X-X^n=\psi-\psi^n$, we arrive to the required bound \eqref{eq:frac main bound}.
\end{proof}

\section{Malliavin calculus for the Euler-Maruyama scheme}\label{sec:Malliavin}
In the multiplicative standard Brownian case, we first consider Euler-Maruyama schemes without drift: for any $y\in\R^d$ define the process $\bar X^n(y)$ by
\begin{equ}\label{eq:EM no drift}
d\bar X^n_t(y)=\sigma(\bar X^n_{\kappa_n(t)}(y))\,dB_t,\quad \bar X^n_0=y.
\end{equ}
This process will play a similar role as $B^H$ in the previous section.
Similarly to the proof of Lemma \ref{lem:(ii)}, we need sharp bounds on the conditional distribution of $\bar X^n_t$ given $\F_s$,
which can be obtained from bounds of the density of $\bar X^n_t$.
A trivial induction argument yields that for $t>0$, $\bar X^n_t$ indeed admits a density, but to our knowledge such inductive argument can not be used to obtain useful quantitative information.

\begin{remark}\label{rem:density}
While the densities of Euler-Maruyama approximations have
been studied in the literature, see e.g. \cite{GyK, Bally-Talay, Bally-Talay2}, none of the available estimates suited well for our purposes. 
In \cite{GyK}, under less regularity assumption on $\sigma$, $L_p$ bounds of the density (but not its derivatives) are derived. In \cite{Bally-Talay, Bally-Talay2}, smoothness of the density is obtained even in a hypoelliptic setting, but without sharp control on the short time behaviour of the norms.
\end{remark}

\begin{theorem}\label{thm:density}
Let $\sigma$ satisfy \eqref{eq:elliptic}, $\bar X^n$ be the solution of \eqref{eq:EM no drift}, and let $G\in\C^1$. Then for all $t=1/n,2/n,\ldots,1$ and $k=1,\ldots,d$ one has the bound
\begin{equ}\label{eq:main Malliavin}
|\E\d_k G(\bar X^n_t)|\leq N \|G\|_{\C^0}t^{-1/2} + N\|G\|_{\C^1}e^{-cn}
\end{equ}
with some constant $N=N(d,\lambda,\|\sigma\|_{\C^2})$ and $c=c(d,\|\sigma\|_{\C^2})>0$.
\end{theorem}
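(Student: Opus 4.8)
The plan is to prove \eqref{eq:main Malliavin} via a Malliavin integration by parts formula for $\bar X^n_t$, localized so as to discard an event of probability at most $e^{-cn}$ on which the scheme \eqref{eq:EM no drift} degenerates; this localization is exactly what produces the second, $\|G\|_{\C^1}e^{-cn}$, term. First I would record the Malliavin structure. Writing $t_j:=j/n$, fixing $t=t_m$, setting $\Delta_jB:=B_{t_{j+1}}-B_{t_j}$ and $\Delta_j:=\sum_{\ell=1}^d\d_\ell\sigma(\bar X^n_{t_j})(\Delta_jB)^\ell\in\R^{d\times d}$, one checks that $\bar X^n_t\in\bigcap_{q\ge1}\mathbb D^{2,q}$, that the first variation matrix $J^n_{t_k}:=\d\bar X^n_{t_k}/\d y$ solves $J^n_{t_{k+1}}=(I+\Delta_k)J^n_{t_k}$ with $J^n_{t_0}=I$, and that for $s\in(t_j,t_{j+1})$ one has $D_s\bar X^n_t=J^n_t(J^n_{t_{j+1}})^{-1}\sigma(\bar X^n_{t_j})$ whenever the inverse exists. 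Consequently the Malliavin covariance matrix factorizes as $\gamma^n_t:=\int_0^t D_s\bar X^n_t(D_s\bar X^n_t)^T\,ds=J^n_t\,C^n_t\,(J^n_t)^T$ with $C^n_t:=\tfrac1n\sum_{k=1}^m(J^n_{t_k})^{-1}\sigma\sigma^T(\bar X^n_{t_{k-1}})(J^n_{t_k})^{-T}$, the discrete analogue of $\int_0^t J_s^{-1}\sigma\sigma^T(X_s)J_s^{-T}\,ds$.

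The core of the argument is a set of moment estimates, all with constants uniform in $n$. Let $\Omega_n:=\{\max_{0\le j<m}\|\Delta_j\|\le 1/2\}$; Gaussian tails of the increments give $\P(\Omega_n^c)\le m\,e^{-c_0n}\le e^{-cn}$, and on $\Omega_n$ every $I+\Delta_j$ is invertible with $\|(I+\Delta_j)^{-1}\|\le 2$. I would then prove: (i) $\E\|J^n_{t_k}\|^q\le N_q$; (ii) $\E[\bone_{\Omega_n}\|(J^n_{t_k})^{-1}\|^q]\le N_q$; (iii) $\E[\bone_{\Omega_n}\|(\gamma^n_t)^{-1}\|^q]\le N_q t^{-q}$. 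For (i)--(ii) the point is to iterate conditional expectations and use $\E[\Delta_j\mid\F_{t_j}]=0$: this annihilates the dangerous first-order term and leaves $\E[\|(I+\Delta_j)J^n_{t_j}\|^q\mid\F_{t_j}]\le(1+N_q/n)\|J^n_{t_j}\|^q$, and likewise for the inverse via $(I+\Delta_j)^{-1}=I-\Delta_j+O(\|\Delta_j\|^2)$ on $\Omega_n$, so a discrete Gr\"onwall argument over $m\le n$ steps closes the bound. For (iii), ellipticity gives $C^n_t\succeq\tfrac{\lambda}{n}\sum_{k=1}^m(J^n_{t_k})^{-1}(J^n_{t_k})^{-T}$; testing against a unit vector $v$ and introducing the discrete stopping time $\tau_n:=\inf\{k\ge1:\,|(J^n_{t_k})^{-T}v|^2<1/3\}$ one gets $C^n_t\succeq c\,\tfrac{\lambda}{n}(\tau_n\wedge m)I$, and since on $\Omega_n$ the process $k\mapsto\log|(J^n_{t_k})^{-T}v|^2$ is a discrete semimartingale whose drift and martingale increments are of size $1/n$, a large-deviation estimate ($\P(\tau_n\le k)\le N e^{-c n/k}$) yields $\E[\bone_{\Omega_n}(\tau_n\wedge m)^{-q}]\le N_q(nt)^{-q}$; thus $\E[\bone_{\Omega_n}\|(C^n_t)^{-1}\|^q]\le N_q(\lambda t)^{-q}$, and combining with (i) and the factorization gives (iii).

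With these estimates the conclusion is routine. Choose a smooth cutoff $\chi\in\bigcap_{q\ge1}\mathbb D^{2,q}$ with $\bone_{\Omega_n}\le\chi\le1$, supported where (say) $\|(I+\Delta_j)^{-1}\|\le 3$ for all $j$, built as a product of smooth functions of the $\|\Delta_j\|$ together with a cutoff on the number of indices with $\|\Delta_j\|>1/2$, so that $\|(\int_0^t|D_s\chi|^2ds)^{1/2}\|_{L_q(\Omega)}$ is bounded uniformly in $n$. Split $\E\d_kG(\bar X^n_t)=\E[\d_kG(\bar X^n_t)(1-\chi)]+\E[\d_kG(\bar X^n_t)\chi]$; the first term is at most $\|G\|_{\C^1}\E[1-\chi]\le\|G\|_{\C^1}e^{-cn}$. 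For the second, on $\{\chi>0\}$ the matrix $\gamma^n_t$ is invertible and $\chi(\gamma^n_t)^{-1}_{k\ell}D\bar X^{n,\ell}_t$ lies in the domain of the Skorokhod integral $\delta$, so the localized integration by parts gives $\E[\d_kG(\bar X^n_t)\chi]=\E[G(\bar X^n_t)H^k_t]$ with $H^k_t:=\sum_\ell\delta(\chi(\gamma^n_t)^{-1}_{k\ell}D\bar X^{n,\ell}_t)$. Using (iii) one has $\|(\gamma^n_t)^{-1}\|\le N t^{-1}$ on $\{\chi>0\}$, $(\int_0^t|D_s\bar X^n_t|^2ds)^{1/2}=(\Tr\gamma^n_t)^{1/2}\le Nt^{1/2}$, and uniform $L_q$ control of $D^2\bar X^n_t$, $D\gamma^n_t$ and $D\chi$; expanding $H^k_t$ through $\delta(Fu)=F\delta(u)-\langle DF,u\rangle$, its leading term $\chi(\gamma^n_t)^{-1}_{k\ell}\delta(D\bar X^{n,\ell}_t)$ is of order $t^{-1}\cdot t^{1/2}$ in $L_q(\Omega)$ and the remaining terms are no larger, so $\|H^k_t\|_{L_q(\Omega)}\le N_q t^{-1/2}$. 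Hence $|\E[\d_kG(\bar X^n_t)\chi]|\le\|G\|_{\C^0}\|H^k_t\|_{L_1(\Omega)}\le N\|G\|_{\C^0}t^{-1/2}$, which together with the first term gives \eqref{eq:main Malliavin}.

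The main obstacle is estimate (iii): bounding $\|(\gamma^n_t)^{-1}\|$ with the sharp power $t^{-1}$ and a constant uniform in $n$. Any factor-by-factor estimate of $\|(J^n_{t_k})^{-1}\|$ degrades exponentially in $n$, and $(I+\Delta_j)^{-1}$ need not even be defined on all of $\Omega$; one must exploit the mean-zero structure of the Brownian increments (the discrete counterpart of the It\^o correction) to obtain the uniform moment bounds, and replace the continuous-time lower bound $\int_0^t J_s^{-1}J_s^{-T}\,ds\succeq ctI$ by the hitting-time estimate for $\tau_n$, all of which is legitimate only after truncating to $\Omega_n$. That this truncation cannot be dispensed with is precisely what forces the unavoidable $\|G\|_{\C^1}e^{-cn}$ error term.
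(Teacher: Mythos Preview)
Your overall architecture---Malliavin integration by parts plus a localization that discards an event of probability $e^{-cn}$---matches the paper's. The implementation, however, differs in two places, and one of them contains a real gap.

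First, the paper does not localize by multiplying with a smooth random cutoff $\chi$. Instead it \emph{modifies the process}: each increment $\Delta_jW$ is replaced by $\varrho(\Delta_jW)$ for a fixed smooth compactly supported $\varrho$ with $\varrho(r)=r$ near $0$, and one works with the resulting process $\cX_j$. This makes the one-step map $x\mapsto\cX_j(x)$ a global diffeomorphism, so $J_j^{-1}$ exists everywhere; via It\^o's formula applied to $\varrho$, both $J_j$ and $J_j^{-1}$ solve linear SDEs with bounded coefficients, and standard SDE moment estimates give $\sup_j\E\|J_j^{\pm1}\|^q\le N_q$ with no indicator and no discrete Gr\"onwall bookkeeping. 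The price is that $\cX_j=\bar X^n_{j/n}$ only on $\hat\Omega$, which is exactly what produces the $e^{-cn}$ term. Your route through $\bone_{\Omega_n}$ and a smooth $\chi$ built from $n$ factors can perhaps be salvaged, but you do not actually construct such a $\chi$ with $\|D\chi\|_H$ uniformly bounded in $L_q$, and the phrase ``together with a cutoff on the number of indices'' does not resolve this.

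Second, and more seriously, your argument for (iii) is incomplete. The stopping time $\tau_n$ depends on the unit vector $v$, so the displayed inequality $C^n_t\succeq c\,\tfrac{\lambda}{n}(\tau_n\wedge m)I$ is not well-posed; what you actually obtain is $v^TC^n_tv\ge c\,\tfrac{\lambda}{n}(\tau_n^v\wedge m)$ for each fixed $v$. Passing from this to a moment bound on $\|(C^n_t)^{-1}\|=\bigl(\inf_{|v|=1}v^TC^n_tv\bigr)^{-1}$ requires a covering argument on the sphere together with some regularity of $v\mapsto\tau_n^v$, and your large-deviation estimate $\P(\tau_n\le k)\le Ne^{-cn/k}$ is asserted rather than proved. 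In fact this entire detour is unnecessary. The paper gets (iii) in one line: since matrix inversion is operator-convex on positive definite matrices and $A\mapsto\|A\|^p$ is convex and monotone there, the map $A\mapsto\|A^{-1}\|^p$ is convex, so by Jensen
\[
\|\cM^{-1}\|^p
=t^{-p}\Bigl\|\Bigl(\tfrac{1}{j}\sum_{i=1}^j J_{i,j}\,\sigma\sigma^T(\cX_{i-1})\,J_{i,j}^T\Bigr)^{-1}\Bigr\|^p
\le t^{-p}\,\tfrac{1}{j}\sum_{i=1}^j\bigl\|\bigl(J_{i,j}\sigma\sigma^TJ_{i,j}^T\bigr)^{-1}\bigr\|^p
\le \lambda^{-p}t^{-p}\sup_i\|J_{i,j}^{-1}\|^{2p},
\]
and (iii) follows immediately from the moment bound on $J_{i,j}^{-1}$. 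No stopping time, no large deviations, no covering of the sphere.
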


We will prove Theorem \ref{thm:density} via Malliavin calculus.
In our discrete situation, of course this could be translated to finite dimensional standard calculus,
but we find it more instructive to follow the basic terminology of \cite{Nualart-Malliavin}, which we base on the lecture notes \cite{Hairer_notes}.

\subsection{Definitions}
Define $H=\{h=(h_i)_{i=1,\ldots,n}:\,h_i\in\R^d\}$, with the norm
\begin{equ}
\|h\|^2_H=\frac{1}{n}\sum_{i=1}^n|h_i|^2=\frac{1}{n}\sum_{i=1}^n\sum_{k=1}^d|h_i^k|^2.
\end{equ}
One can obtain a scalar product from $\|\cdot\|_H$, which we denote by $\<\cdot,\cdot\>_H$.
Let us also denote $\cI=\{1,\ldots,n\}\times\{1,\ldots,d\}$.
One can of course view $H$ as a copy of $\R^\cI$, with a rescaled version of the usual $\ell_2$ norm.
We denote by $e_{(i,k)}$ the element of $H$ whose elements are zero apart from the $i$-th one, which is the $k$-th unit vector of $\R^d$.
Set $\Delta W_{(i,k)}:=W^{k}_{i/n}-W^k_{(i-1)/n}$.
Then for any $\R$-valued random variable $X$ of the form
\begin{equ}
X=F(\Delta W_{(i,k)}:\,(i,k)\in\cI),
\end{equ}
where $F$ is a differentiable function, with at most polynomially growing derivative, the Malliavin derivative of $X$ is defined as the $H$-valued random variable
\begin{equ}
\cD X :=
\sum_{(i,k)\in\cI}(\cD^k_i X)e_{(i,k)}
:=\sum_{(i,k)\in\cI}\partial_{(i,k)}F( \Delta W_{(j,\ell)}:\,(j,\ell)\in\cI)e_{(i,k)}.
\end{equ}
For multidimensional random variables we define $\cD$ coordinatewise.
In the sequel we also use the matrix norm on $\R^{d\times d}$ defined in the usual way $\|M\|:=\sup_{x\in\R^d, |x|=1}|Mx|$. Recall that if $M$ is positive semidefinite, then one has $\|M\|=\sup_{x\in\R^d, |x|=1}x^*Mx$.
It follows that $\|\cdot\|$ is monotone increasing with respect to the usual order $\preceq$ on the positive semidefinite matrices.

The following few properties are true in far larger generality, for the proofs we refer to \cite{Hairer_notes}.
One easily sees that the derivative $\cD$ satisfies the chain rule: namely, for any differentiable $G:\R^d\to\R$, one has $\cD G(X)=\nabla G(X)\cdot \cD X$.
The operator $\cD$ is closable, and its closure will also be denoted by $\cD$, whose domain we denote by $\cW\subset L_2(\Omega)$. The adjoint of $\cD$ is denoted by $\delta$. One then has that  the domain of $\delta$  is included in $\cW(H)$ and the following identity holds:
\begin{equ}\label{eq:delta}
\E|\delta u|^2=\E\|u\|^2_H+\E\frac{1}{n^2}\sum_{(i,k),(j,m)\in\cI}(\cD^k_i u^m_j)(\cD^m_j u^k_i).
\end{equ}

\subsection{Stochastic difference equations}
First let us remark that the equation \eqref{eq:EM no drift} does not define an invertible stochastic flow: indeed, for any $t>0$, $y\to\bar X^n_t(y)$ may not even be one-to-one. Therefore in order to invoke arguments from the Malliavin calculus for diffusion processes, we consider a modified process equation that does define an invertible flow. Unfortunately, this new process will not have a density, but its singular part (as well as its difference from the original process) is exponentially small.

Take a smooth function $\varrho:\R\to\R$ such that $|\varrho(r)|\leq |r|$
for all $r\in\R$,
$\varrho(r)=r$ for $|r|\leq (4\|\sigma\|_{\C^1} d^2)^{-1}$, $\varrho(r)=0$ for $|r|\geq (2\|\sigma\|_{\C^1} d^2)^{-1}$, and that satisfies $|\d^k\varrho|\leq N$ for $k=0,\ldots,3$ with some $N=N(d,\|\sigma\|_{\C^1})$.
Define the recursion, for $x\in\R^d$ and $j=1,\ldots, n$, $k=1,\ldots,d$
\begin{equ}\label{eq:recursion}
\cX_{j}^{k}(x)=\cX_{j-1}^k(x)+\sum_{\ell=1}^d\sigma^{k\ell}\big(\cX_{j-1}(x)\big)\varrho(\Delta W_{(j,\ell)}),\qquad \cX_{0}(x)=x.
\end{equ}
By our definition of $\varrho$, for any $j$, \eqref{eq:recursion} defines a diffeomorphism from $\R^d$ to $\R^d$ by $x\to \cX_{j}(x)$.
It is easy to see that its Jacobian
$J_{j}(x)=\big(J_{j}^{m,k}(x)\big)=\big(\d_{x^m}\cX^k_{j}(x)\big)_{k,m=1,\ldots,d; \,j=1,\ldots,n}$
satisfies the recursion
\begin{equ}
J_{j}^{m,k}(x)=J_{j-1}^{m,k}(x)+
\sum_{q=1}^d J_{j-1}^{m,q}(x)
\Big[\sum_{\ell=1}^d\d_{q}\sigma^{k\ell}\big(\cX_{j-1}(x)\big)\varrho(\Delta W_{(j,\ell)})\Big],\qquad J_{0}(x)=\id.
\end{equ}
It is also clear that $\cD_i^m\cX^k_j=0$ for $j<i$, while for $j>i$ we have the recursion
\begin{equs}
\cD_i^m\cX^k_j(x)=\cD_i^m\cX^k_{j-1}(x)
+
\sum_{q=1}^d\cD_i^m \cX^q_{j-1}(x) & \Big[
\sum_{\ell=1}^d \d_q\sigma^{k\ell}\big(\cX_{j-1}(x)\big)
	\varrho(\Delta W_{(j,\ell)})\Big],
\\
&\cD^m_i\cX^k_i=\sigma^{km}\big(\cX_{i-1}(x)\big)\varrho'(\Delta W_{(i,m)}).
\end{equs}
From now on we will usually suppress the dependence on $x$ in the notation.
Save for the initial conditions, the two recursions coincide for the matrix-valued processes $J_\cdot$ and $\cD_i \cX_\cdot$. Since the recursion is furthermore linear, $j\mapsto J_j^{-1}\cD_i \cX_j$ is constant in time for $j\geq i\geq 1$. In particular,
\begin{equ}
J_{j}^{-1}\cD_i \cX_j=J_i^{-1}\big(\sigma^{km}(\cX_{i-1})\varrho'(\Delta W_{(i,m)})\big)_{k,m=1,\ldots,d}\,,
\end{equ}
or, with the notation $J_{i,j}=J_jJ_i^{-1}$,
\begin{equ}
\cD_i \cX_j=J_{i,j}\big(\sigma^{km}(\cX_{i-1})\varrho'(\Delta W_{(i,m)})\big)_{k,m=1,\ldots,d}\,.
\end{equ}
Let us now define the event $\hat \Omega\subset\Omega$ by
\begin{equ}
\hat \Omega=\{|\Delta W_{(i,k)}|\leq (4\|\sigma\|_{\C^1} d^2)^{-1}, \forall (i,k)\in\cI\}
\end{equ}
as well as the (matrix-valued) random variables $\mathcal{D}_{i,j}$ by
\begin{equ}\label{eq:malliavin id}
\mathcal{D}_{i,j}=J_{i,j}\sigma(\cX_{i-1}).
\end{equ}
Clearly, on $\hat\Omega$ one has $\mathcal{D}_{i,j}=\cD_i \cX_j$.
Note that for fixed $j,m$ one may view $\ccD_{\cdot,j}^{\cdot,m}$ as an element of $H$, while for fixed $i,j$ one may view $\ccD_{i,j}$ as a $d\times d$ matrix.
One furthermore has the following exponential bound on $\hat \Omega$.
\begin{proposition}\label{prop:small Omega}
There exist $N$ and $c>0$ depending only on $d$ and $\|\sigma\|_{\C^1}$, one has $\bP(\hat\Omega)\geq 1-Ne^{-cn}$.
\end{proposition}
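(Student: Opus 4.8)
The plan is a routine union bound combined with the standard Gaussian tail estimate; I do not anticipate a genuine obstacle, only a couple of points that require a small amount of care. First I would recall that the increments $\Delta W_{(i,k)}=W^k_{i/n}-W^k_{(i-1)/n}$, $(i,k)\in\cI$, are independent centered Gaussian random variables, each of variance $1/n$, so that $\sqrt n\,\Delta W_{(i,k)}$ is standard normal. Writing $\theta:=(4\|\sigma\|_{\C^1}d^2)^{-1}$ for the threshold in the definition of $\hat\Omega$, the complement $\hat\Omega^c$ is simply the event that $|\Delta W_{(i,k)}|>\theta$ for at least one of the $nd$ pairs $(i,k)\in\cI$.

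Next I would apply the union bound together with the elementary inequality $\bP(|Z|>x)\le 2e^{-x^2/2}$ for a standard normal $Z$ and $x\ge 0$, which gives
\begin{equ}
\bP(\hat\Omega^c)\le nd\,\bP\bigl(|\Delta W_{(1,1)}|>\theta\bigr)=nd\,\bP\bigl(|Z|>\theta\sqrt n\bigr)\le 2nd\,e^{-\theta^2 n/2}.
\end{equ}
It then remains to absorb the polynomial prefactor into the exponential: since $\sup_{n\ge 1} n\,e^{-\theta^2 n/4}\le 4/(\theta^2 e)$, one has $2nd\,e^{-\theta^2 n/2}\le \bigl(8d/(\theta^2 e)\bigr)e^{-\theta^2 n/4}$, which is of the asserted form $Ne^{-cn}$ with $c=\theta^2/4$ and $N=8d/(\theta^2 e)$. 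Both constants depend only on $d$ and $\|\sigma\|_{\C^1}$, through $\theta$, as required.

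The only things to be mildly careful about are that the constants are genuinely independent of $n$ — which is immediate here because $\theta$ does not involve $n$ — and that the factor $nd$ is dominated by the exponential uniformly in $n\ge 1$, not merely asymptotically, which is exactly what the elementary bound on $\sup_{n\ge1} n\,e^{-\theta^2 n/4}$ takes care of.
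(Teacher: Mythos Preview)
Your proof is correct and follows essentially the same approach as the paper's. The only cosmetic difference is that the paper uses independence of the increments together with Bernoulli's inequality $(1-x)^{nd}\ge 1-nd\,x$ to arrive at $\bP(\hat\Omega^c)\le nd\cdot\bP(|\Delta W_{(1,1)}|>\theta)$, whereas you obtain the same inequality directly via the union bound; both then absorb the polynomial prefactor into the exponential in the same way.
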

\begin{proof}
For each $(i,k)\in\cI$, since $\Delta W_{(i,k)}$ is zero mean Gaussian with variance $n^{-1}$, one has
\begin{equ}
\bP\big(\varrho(\Delta W_{(i,k)})\neq\Delta W_{(i,k)}\big)\leq
\bP\big(|\Delta W_{(i,k)}|\geq(4\|\sigma\|_{\C^1} d^2)^{-1}\big)\leq N'e^{-c'n}
\end{equ}
with some $N'$ and $c'>0$ depending only on $d$ and $\|\sigma\|_{\C^1}$,
by the standard properties of the Gaussian distribution. Therefore, by the elementary inequality $(1-x)^\alpha\geq 1-\alpha x$, valid for all $x\in[0,1]$ and $\alpha\geq 1$, one has
\begin{equs}
\bP(\hat\Omega)\geq \big(1-(N'e^{-c'n}\wedge 1)\big)^{nd}\geq 1-N'nde^{-c'n}\geq 1-Ne^{-(c'/2)n}.
\end{equs}
\end{proof}

We now fix $(j,k)\in\cI$, $G\in \C^1$, and we aim to bound $|\E\d_k G(X_j)|$ in terms of $t:=j/n$ and $\|G\|_0$, and some additional exponentially small error term.
To this end, we define the Malliavin  matrix $\cM\in\R^{d\times d}$
\begin{equ}
\cM^{m,q}=\<\ccD_{\cdot,j}^{\cdot,m},\ccD_{\cdot,j}^{\cdot,q}\>_H=\frac{1}{n}\sum_{(i,v)\in\cI}\ccD_{i,j}^{v,m}\ccD_{i,j}^{v,q},
\end{equ}
with $m,q=1,\ldots,d$.
As we will momentarily see (see \eqref{eq:M inverse}), $\cM$ is invertible. Define
$$
Y=\sum_{m=1}^d(\ccD_{\cdot,j}^{\cdot,m})(\cM^{-1})^{m,k}\in H.
$$
One then has by the chain rule that on $\hat \Omega$, $\d_k G(\cX_j)= \<\cD G(X_j),Y\>_H$.
Therefore,
\begin{equs}
\E \d_k G(\cX_j)&=\E \<\cD G(X_j),Y\>_H+\E\d_k G(\cX_j)\bone_{\hat\Omega^c}-\E \<\cD G(\cX_j),Y\>_H\bone_{\hat\Omega^c}
\\
&=\E(G( X_j),\delta Y)+\E\d_k G(\cX_j)\bone_{\hat\Omega^c}-\E \<\cD G(\cX_j),Y\>_H\bone_{\hat\Omega^c}
\\
&=:\E(G( \cX_j),\delta Y)+I_1+I_2.\label{eq:Mall IBP}
\end{equs}
Recalling \eqref{eq:delta}, one has
\begin{equ}\label{eq:delta Y}
\E|\delta Y|^2\leq\E\|Y\|^2_H+\E\frac{1}{n^2}\sum_{(i,q),(r,m)\in\cI}(\cD^q_i Y^m_r)(\cD^m_rY^q_i).
\end{equ}
Theorem \ref{thm:density} will then follow easily once we have the appropriate moment bounds of the objects above. Recall the notation $t=j/n$.
\begin{lemma}\label{lem:bounds Mall}
Assume the above notations and let $\sigma$ satisfy \eqref{eq:elliptic}.
Then for any $p>0$, one has the bounds
\begin{equ}\label{eq:J bound}
\E\sup_{i=1,\ldots,j}\|J_{i,j}(x)\|^p+\E \sup_{1\leq i\leq j}\|J_{i,j}^{-1}(x)\|^p\leq N,
\end{equ}
\begin{equ}\label{eq:ccD bound}
\E\sup_{i=1,\ldots,j}\|\ccD_{i,j}(x)\|^p\leq N,
\end{equ}
\begin{equ}\label{eq:M inverse bound}
\E\|\cM^{-1}(x)\|^p\leq Nt^{-p},
\end{equ}
\begin{equ}\label{eq:cD Y bound}
\sup_{i =1,\ldots, j}\E\sup_{r=1,\ldots,j}\|\cD_i Y_r(x)\|^p\leq N t^{-p}.
\end{equ}
for all $x \in \R^d$, with some $N=N(p,d,\lambda,\|\sigma\|_{\C^2})$.
\end{lemma}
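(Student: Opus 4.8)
The plan is to derive all four estimates from the linear recursions satisfied by the Jacobian $J$, its inverse, and their Malliavin derivatives, the only genuine issue being to get moment bounds that are \emph{uniform in $n$}; estimate \eqref{eq:J bound} is the crux, \eqref{eq:ccD bound} is then immediate, \eqref{eq:M inverse bound} follows from \eqref{eq:J bound} via a lower bound for $\cM$, and \eqref{eq:cD Y bound} follows from \eqref{eq:J bound}, \eqref{eq:M inverse bound} and Malliavin-differentiated versions of the same recursions. For \eqref{eq:J bound} I would write the Jacobian recursion as $J_l=J_{l-1}(I+E_l)$, where $E_l$ is $\F_{l/n}$-measurable, satisfies $\|E_l\|\le 1/2$ a.s.\ (by the cut-off $\varrho$), $\E[E_l\mid\F_{(l-1)/n}]=0$ (we may take $\varrho$ odd, so $\varrho(\Delta W_{(l,\ell)})$ is centred), and $\E[\|E_l\|^2\mid\F_{(l-1)/n}]\le c/n$ with $c=c(d,\|\sigma\|_{\C^2})$ (since $\E\varrho(\Delta W_{(l,\ell)})^2\le 1/n$). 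Fixing a unit vector $w$ and setting $v_l:=J_l^{T}w$, so $v_l=(I+E_l^{T})v_{l-1}$, one expands $|v_l|^2=|v_{l-1}|^2+2\langle v_{l-1},E_l^{T}v_{l-1}\rangle+|E_l^{T}v_{l-1}|^2$: the cross term has zero conditional mean, the last term is non-negative with conditional mean $\le (c/n)|v_{l-1}|^2$, and expanding the power gives $\E[|v_l|^{2k}\mid\F_{(l-1)/n}]\le(1+c_k/n)|v_{l-1}|^{2k}$ for every $k\in\N$. Thus $|v_\cdot|^2$ is a non-negative submartingale, telescoping yields $\E|v_l|^{2k}\le N_k|w|^{2k}$, and Doob's $L^k$ maximal inequality yields $\E\sup_{l\le j}|v_l|^{2k}\le N_k|w|^{2k}$; since $\|J_l\|\le d^{1/2}\max\{|J_l^{T}w|:w\text{ a basis vector}\}$, this gives $\E\sup_{i\le j}\|J_i(x)\|^p\le N$ for all $p>0$, uniformly in $n,x$. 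For $J_i^{-1}$ one argues identically with $E_l$ replaced by $\widetilde E_l:=(I+E_l)^{-1}-I$; now $\E[\widetilde E_l\mid\F_{(l-1)/n}]$ is only of order $1/n$, so one works with the discounted process $(1-c/n)^{-l}|v_l|^2$, which is again a submartingale, and uses that $(1-c/n)^{-n}$ stays bounded. Finally, since $J_{i,j}=J_jJ_i^{-1}$ and $J_{i,j}^{-1}=J_iJ_j^{-1}$, \eqref{eq:J bound} follows by Cauchy--Schwarz.

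Bound \eqref{eq:ccD bound} is immediate from \eqref{eq:malliavin id}, since $\|\ccD_{i,j}\|\le\|\sigma\|_{\C^0}\|J_{i,j}\|$. For \eqref{eq:M inverse bound}, for a unit vector $\xi\in\R^d$ one has, using $\sigma\sigma^{T}\succeq\lambda I$ (hence also $\sigma^{T}\sigma\succeq\lambda I$, $\sigma$ being square) and $|Mv|\ge\|M^{-1}\|^{-1}|v|$,
\[
\xi^{T}\cM\xi=\frac1n\sum_{i=1}^{j}\bigl|J_{i,j}\sigma(\cX_{i-1})\xi\bigr|^2\ \ge\ \frac{\lambda}{n}\sum_{i=1}^{j}\|J_{i,j}^{-1}\|^{-2}\ \ge\ \lambda\,t\,\Bigl(\sup_{1\le i\le j}\|J_{i,j}^{-1}\|\Bigr)^{-2},
\]
so $\|\cM^{-1}(x)\|\le(\lambda t)^{-1}\bigl(\sup_{i\le j}\|J_{i,j}^{-1}(x)\|\bigr)^2$, and \eqref{eq:M inverse bound} follows from \eqref{eq:J bound}.

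For \eqref{eq:cD Y bound}, from $Y_r=\ccD_{r,j}\cM^{-1}e_k$ (with $e_k$ the $k$-th standard basis vector of $\R^d$) and $\cD_i\cM^{-1}=-\cM^{-1}(\cD_i\cM)\cM^{-1}$ one gets
\[
\cD_iY_r=(\cD_i\ccD_{r,j})\,\cM^{-1}e_k-\ccD_{r,j}\,\cM^{-1}(\cD_i\cM)\cM^{-1}e_k .
\]
The processes $\cD_i\cX_r,\cD_iJ_r,\cD_iJ_r^{-1}$ vanish for $r<i$, start at $r=i$ from a bounded matrix (built from $\varrho'$), and for $r>i$ obey linear recursions of exactly the type in \eqref{eq:J bound} driven by an inhomogeneity which — because differentiating $E_r$ reproduces a factor $\varrho(\Delta W_{(r,\cdot)})$ — is conditionally centred with conditional second moment $\le (c/n)$ times a quantity with all moments finite. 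Running the same expand-the-square/Doob argument, now with an affine comparison to absorb the source, gives $\E\sup_{i,r\le j}\|\cD_i\ccD_{r,j}(x)\|^p\le N$ uniformly in $n,x$, whence, since $\cD_i\cM=\tfrac1n\sum_{l\le j}\bigl[(\cD_i\ccD_{l,j})^{T}\ccD_{l,j}+\ccD_{l,j}^{T}\cD_i\ccD_{l,j}\bigr]$ carries a harmless factor $j/n=t\le 1$, also $\E\|\cD_i\cM(x)\|^p\le N$. Substituting these together with \eqref{eq:M inverse bound} into the identity above, and noting that the factor $t$ in $\cD_i\cM$ cancels one of the two factors $t^{-1}$ produced by $\|\cM^{-1}\|^2$, gives \eqref{eq:cD Y bound}, uniformly in $i$.

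\textbf{Main obstacle.} The one real difficulty is the uniformity in $n$: a direct Gronwall estimate in $L_p(\Omega)$ for $J_l=J_{l-1}(I+E_l)$ loses a factor $e^{cn^{1/2}}$, because $\E\|E_l\|$ is only of order $n^{-1/2}$. One must exploit the martingale cancellation present in the quadratic forms $|J_l^{T}w|^2$, recasting a mildly $n$-dependent rescaling of them as submartingales and invoking Doob's maximal inequality — which as a bonus also yields the suprema over $i$ (and over $r$) directly. The secondary point requiring care is the bookkeeping of which inhomogeneous terms in the differentiated recursions are conditionally centred, so that the same mechanism controls the Malliavin derivatives.
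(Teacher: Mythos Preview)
Your approach is correct and genuinely different from the paper's for the core estimate \eqref{eq:J bound}. The paper does not stay in discrete time: it writes $\varrho(W^\ell_t-W^\ell_{\kappa_n(t)})$ via It\^o's formula as a stochastic integral plus a Lebesgue integral, thereby embedding the recursion $J_l=J_{l-1}(I+E_l)$ into a continuous-time linear SDE $dZ_t=Z_{\kappa_n(t)}\mathcal A_t\,dt+\sum_\ell Z_{\kappa_n(t)}\mathcal B^\ell_t\,dW^\ell_t$ with \emph{bounded} coefficients. Uniformity in $n$ and the supremum over $i$ then come for free from standard SDE moment bounds; the same trick is reused for $J^{-1}$ (It\^o on $(I+\Gamma_t)^{-1}$) and for the Malliavin-differentiated recursions. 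Your route---exploiting that (with $\varrho$ odd) $v_l:=J_l^{T}w$ is a genuine martingale, so that $|v_l|^2$ is a non-negative submartingale to which Doob applies---is more elementary and avoids It\^o's formula entirely; the paper's route avoids the extra assumption on $\varrho$ and treats $J$ and $J^{-1}$ symmetrically. Your bound on $\cM^{-1}$ is the same as the paper's, just phrased via the quadratic form rather than matrix-convexity.

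There is one small omission in your treatment of $\cD_i\cM$. The Malliavin matrix is defined as $\cM=\tfrac1n\sum_{l=1}^{n}\ccD_{l,j}^{*}\ccD_{l,j}$, with the sum running over \emph{all} $l\le n$, not only $l\le j$. For the lower bound on $\cM$ used in \eqref{eq:M inverse bound} one may of course drop the terms $l>j$, but for the \emph{upper} bound on $\cD_i\cM$ one cannot. The paper handles this by noting that on the high-probability set $\hat\Omega$ one has $\ccD_{l,j}=\cD_l\cX_j=0$ for $l>j$, and absorbing the contribution of $\hat\Omega^c$ into an $ne^{-cn}\le N$ correction via Proposition~\ref{prop:small Omega}. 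You should add this step; otherwise the ``harmless factor $t$'' you invoke---which is precisely what allows one $t^{-1}$ from $\|\cM^{-1}\|^2$ to be cancelled---is not justified.
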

\begin{proof}
As before, we omit the dependence on $x\in \R^d$ in order to ease the notation.
We first bound the moments of $\sup_j\|J_j\|$. Recall that we have the recursion
\begin{equs}            \label{eq:recursion-J}
J_j= J_{j-1}(I+ \Gamma_{j/n}),
\end{equs}
where the matrix $\Gamma_t=(\Gamma_t)_{q,k=1}^d$ is given by
\begin{equs}             \label{eq:def-gamma}
\Gamma^{q,k}_t = \sum_{\ell=1}^d \D_q \sigma^{k \ell} ( \mathcal{X}_{n \kappa_n(t)}) \varrho ( W^\ell_t -W^\ell_{\kappa_n(t)}),
\end{equs}
By It\^o's formula it follows that
\begin{equs}
\varrho ( W^\ell_t -W^\ell_{\kappa_n(t)})= \int_{{\kappa_n(t)}}^t \varrho'(W^\ell_s-W^\ell_{\kappa_n(t)}) \, dW^\ell_s + \frac{1}{2}\int_{{\kappa_n(t)}}^t\varrho''(W^\ell_s-W^\ell_{\kappa_n(t)})  \, ds.
\end{equs}
Consequently,  for $j=0, \ldots, n$ we have that $J_j= Z_{j/n}$,
where the matrix-valued process $Z_t$ satisfies
\begin{equs}                       \label{eq:representation-SDE-J}
dZ_t =  \sum_{q=1}^d Z_{\kappa_n(t)}\mathcal{A}_t \, dt  +   \sum_{\ell =1}^d   Z_{\kappa_n(t)}  \mathcal{B}^{\ell}_t dW^\ell_t,  \qquad Z_0= I,
\end{equs}
with matrices $\mathcal{A}_s=(\mathcal{A}^{q, k}_s)_{q,k=1,\ldots,d}$ and $\mathcal{B}^\ell_s=(\mathcal{B}^{\ell,q,k}_s)_{q,k=1,\ldots,d}$  given by
\begin{equs}
 \mathcal{A}^{q,k}_s &= \frac{1}{2} \sum_{\ell=1}^d \D_q \sigma^{ k \ell} (\mathcal{X}_{n \kappa_n(s)})\varrho''(W^\ell_s-W^\ell_{\kappa_n(s)})
\\
\mathcal{B}^{\ell,q,k}_s& =  \D_q \sigma^{k \ell} (\mathcal{X}_{n \kappa_n})\varrho'(W^\ell_s-W^\ell_{\kappa_n(s)}).
\end{equs}
Notice that  there exists a constant $N=N (\| \sigma\|_{\mathcal{C}^1},  \|\varrho\|_{\mathcal{C}^2})$  such that almost surely, for all $(t, x) \in [0,1] \times  \R^d$
\begin{equs}                     \label{eq:bound-A-B}
\|\mathcal{A}_t\|+ \sum_{\ell=1}^d\|\mathcal{B}^{\ell}_t\| \leq N.
\end{equs}
This bound combined with the fact that $Z_t$ satisfies \eqref{eq:representation-SDE-J} imply
the bounds
\begin{equs}
\E \sup_{t \leq 1} \|Z_t\|^p \leq N
\end{equs}
for all $p>0$. Hence,
\begin{equs}                 \label{eq:bound-Jj}
\E \sup_{j=1,..,n}\|J_j\|^p \leq \E \sup_{t \leq 1} \|Z_t\|^p \leq  N.
\end{equs}
We now bound the moments of $\sup_j \|J^{-1}_j\|$. By \eqref{eq:recursion-J} we get
\begin{equs}     \label{eq:recursion-J-inverse}
J_j^{-1}=(I+ \Gamma_{j/n})^{-1} J_{j-1}^{-1}
\end{equs}
Recall that  for $t \in [ (j-1)/n, j/n]$
\begin{equs}
\Gamma_t= \int_{(j-1)/n}^t \mathcal{A}_s \, ds + \sum_{\ell=1}^d   \int_{(j-1)/n}^t \mathcal{B}^\ell_s \, dW^\ell_s,
\end{equs}
and that  by the definition of $\varrho$ and \eqref{eq:def-gamma},  for all $t \in [0,T]$,  the matrix  $I+\Gamma_t$
is invertible. Hence,  by It\^o's formula,  we have for $t  \in [ (j-1)/n, j/n]$
\begin{equs}         \label{eq:representation-I+Gamma-inverse}
(I+\Gamma_t)^{-1}= I +\int_{(j-1)/n}^t \tilde{\mathcal{A}}_s \, ds + \sum_{\ell=1}^d \int_{(j-1)/n}^t  \tilde{\mathcal{B}}^\ell_s \, d W^\ell_s,
\end{equs}
with
\begin{equs}
 \tilde{\mathcal{A}}_s & = \sum_{\ell=1}^d
(I+\Gamma_s)^{-1} \mathcal{B}_s^\ell(I+\Gamma_s)^{-1}\mathcal{B}_s^\ell(I+\Gamma_s)^{-1}
 -(I+\Gamma_s)^{-1} \mathcal{A}_s(I+\Gamma_s)^{-1},
\\
\tilde{\mathcal{B}}_s^\ell & = -(I+\Gamma_s)^{-1} \mathcal{B}^\ell_s(I+\Gamma_s)^{-1}.
\end{equs}
Moreover,  by definition or $\varrho$,
almost surely, for all $(t,x) \in [0,T] \times \R^d$ one has
\begin{equs}                     \label{eq:bound-A,B,tilde}
 \|\tilde{\mathcal{A}}_t\|+\sum_{\ell=1}^d \| \tilde{\mathcal{B}}^\ell_t \| \leq N.
\end{equs}
By \eqref{eq:recursion-J-inverse} and \eqref{eq:representation-I+Gamma-inverse}, for $j=1,...,n$
 we have that
$ J^{-1}_j= \tilde{Z}_{j/n}$,
  where the matrix valued process $\tilde{Z}_t$ is defined by
 \begin{equs}
 d\tilde{Z}_t=  \tilde{\mathcal{A}}_t \tilde{Z}_{\kappa_n(t)} \, dt + \sum_{\ell=1}^d  \tilde{\mathcal{B}}^\ell_t \tilde{Z}_{\kappa_n(t)} \, dW^\ell_s, \qquad \tilde{Z}_0  =I .
 \end{equs}
By this and the bounds  \eqref{eq:bound-A,B,tilde} we have
the bounds
\begin{equs}
\E \sup_{t \leq 1} \|\tilde{Z}_t\|^p \leq N
\end{equs}
for all $p>0$. Consequently,
\begin{equs}              \label{eq:bound-Jj-inverse}
\E \sup_{j=1,...,n} \|J^{-1}_j\|^p \leq \E \sup_{t \leq 1} \|\tilde{Z}_t\|^p \leq N.
\end{equs}
Finally, from \eqref{eq:bound-Jj} and \eqref{eq:bound-Jj-inverse} we obtain \eqref{eq:J bound}.

The bound \eqref{eq:ccD bound} then immediately follows from \eqref{eq:J bound}, the definition \eqref{eq:malliavin id}, and the boundedness of $\sigma$.

Next, we show \eqref{eq:M inverse bound}.
On the set of positive definite matrices we have that on one hand,
matrix inversion is a convex mapping, and on the other hand, the function $\|\cdot\|^p$ is a convex increasing mapping for $p\geq 1$.
It is also an elementary fact that if $B\succeq \lambda I$, then $\|(ABA^*)^{-1}\|\leq \lambda^{-1}\|(AA^*)^{-1}\|$.
One then writes
\begin{equs}
\|\cM^{-1}\|^p & = \Big(\frac{n}{j}\Big)^p\Big\|\Big(\frac{1}{j}\sum_{i=1}^j\big[J_{i,j}\sigma(\cX_{i-1})\big]\big[J_{i,j}\sigma(\cX_{i-1})\big]^*\Big)^{-1}\Big\|^p
\\
&\leq t^{-p}\frac{1}{j}\sum_{i=1}^j\|\big(\big[J_{i,j}\sigma(\cX_{i-1})\big]\big[J_{i,j}\sigma(\cX_{i-1})\big]^*\big)^{-1}\|^p
\\
&\leq\lambda^{-p}t^{-p}\frac{1}{j}\sum_{i=1}^j\|J_{i,j}^{-1}\|^{2p}
\\
&\leq
\lambda^{-p}t^{-p}\sup_{i=1,\ldots,j}\|J_{i,j}^{-1}\|^{2p}.
\label{eq:M inverse}
\end{equs}
Therefore \eqref{eq:M inverse bound} follows from \eqref{eq:J bound}

We now move to the proof of \eqref{eq:cD Y bound}. First of all, notice that
the above argument yields
\begin{equs}             \label{eq:bound-DX}
\sup_{i = 1,...,n} \E \sup_{j=1,...,n} \| \mathscr{D}_i  \mathcal{X}_j\|^p \leq N.
\end{equs}
for all $p>0$.
Indeed, the proof of this is identical to the proof of \eqref{eq:bound-Jj} since $(\mathscr{D}_i \mathcal{X}_j)_{j \geq i}$ has the same dynamics as $(J_j)_{j\geq0} $ and initial condition $\mathscr{D}^k_i \mathcal{X}^m_i=\sigma^{km} ( \mathcal{X}_{i-1}) \varrho'(\Delta W_{(i,m)})$ which is bounded. Recall that
\begin{equs}
Y_r = \sum_{m=1}^d ( \mathcal{D}^{\cdot, m}_{r,j}) (\mathscr{M}^{-1})^{m,k}.
\end{equs}
By Leibniz's rule, for each $i, r \in \{0,..,n\}$, $\mathscr{D}_iY^r$  is a $\R^d \otimes \R^d$-valued random variable given by
\begin{equs}          \label{eq:decomp-DY}
\mathscr{D}_iY_r= \sum_{m=1}^d ( \mathscr{D}_i \mathcal{D}^{\cdot, m}_{r,j})  (\mathscr{M}^{-1})^{m,k}+ \sum_{m=1}^d \mathcal{D}^{\cdot, m}_{r,j} \otimes  \mathscr{D}_i (\mathscr{M}^{-1})^{m,k}
\end{equs}
We start with a bound for $\sup_r \|\mathscr{D}_i \mathcal{D}_{r,j}\|$. By definition of $\mathcal{D}_{i,j}$ we have that
\begin{equs}            \label{eq:decomp-DD}
\mathscr{D}_i\mathcal{D}_{r,j} = (\mathscr{D}_iJ_j ) J^{-1}_r \sigma(\mathcal{X}_{r-1})+ J_j  (\mathscr{D}_iJ^{-1}_r)  \sigma(\mathcal{X}_{r-1})+ J_j  J^{-1}_r (\mathscr{D}_i \sigma(\mathcal{X}_{r-1})),
\end{equs}
where for $A \in (\R^d)^{\otimes 2}$, $B \in (\R^d)^{\otimes 3}$, the product $AB$ or $BA$ is an element of  $(\R^d)^{\otimes 3}$ that arises by considering $B$ as a $d\times d$ matrix whose entries are elements of $\R^d$. We estimate the term $\mathscr{D} _i J_j$.  As before, we have that $\mathscr{D}_i J_j = \mathscr{D}_i Z_{j/n}$,  where $Z$ is given by \eqref{eq:representation-SDE-J}. We have that  $\mathscr{D}_i Z_t=0$ for $t <i/n$ while for $t \geq  i/n$ the process $\mathscr{D}_i Z_t=:\mathscr{Z}^i_t$ satisfies
\begin{equs}                  \label{eq:SDE-DZ}
\mathscr{Z}^i_t & =  \left( \mathscr{Z}^i_{\kappa_n(t)} \mathcal{A}_t + Z_{\kappa_n(t)} \mathscr{D}_iA_t \right) \, dt+ \sum_{\ell=1}^d \left(  \mathscr{Z}^i_{\kappa_n(t)} \mathcal{B}^\ell_t + Z_{\kappa_n(t)} \mathscr{D}_i \mathcal{B}^\ell_t \right) dW^\ell_t
\\
\mathscr{Z}^i_{i/n} & = Z_{i/n}\sum_{\ell=1}^d \mathcal{B}^\ell _{i/n}
\end{equs}
By the chain rule and \eqref{eq:bound-DX} it follows that for $p>0$ there exists $N=N(\| \sigma\|_{\mathcal{C}^2}, \| \varrho\|_{\mathcal{C}^3}, d,p)$ such that
\begin{equs}           \label{eq:moments-DA-DB}
\sup_{i=1,...,n} \E \left(\sup_{t \leq 1} \|\mathscr{D}_i \mathcal{A}_t\|^p + \sum_{\ell=1}^d\sup_{t \leq 1}\| \mathscr{D}_i \mathcal{B}^\ell_t \|^p  \right)   \leq N
\end{equs}
This combined with \eqref{eq:bound-Jj} shows that for the `free terms' of \eqref{eq:SDE-DZ} we have
\begin{equs}
\sup_{i=1,...,n} \E \left(\sup_{t \leq 1} \|Z_{\kappa_n(t)}\mathscr{D}_i \mathcal{A}_t\|^p + \sum_{\ell=1}^d\sup_{t \leq 1}\|Z_{\kappa_n(t)} \mathscr{D}_i \mathcal{B}^\ell_t \|^p  \right)   \leq N.
\end{equs}
This, along with \eqref{eq:bound-A-B} and  \eqref{eq:bound-Jj},  implies that
\begin{equs}                \label{eq:bound-DJ}
\sup_{i=1,...,n} \E \sup_{j=1,...,n} \|\mathscr{D}_i J_j\|^p \leq \sup_{i=1,...,n} \E \sup_{i/n \leq t \leq 1} \| \mathscr{Z}^i_t \|^p \leq  N.
\end{equs}
This in turn, combined with \eqref{eq:bound-Jj-inverse} and the boundedness of $\sigma$,  implies that
\begin{equs}
\sup_{i=1,...,n} \E \sup_{r=1,...,n} \|(\mathscr{D}_iJ_j ) J^{-1}_r \sigma(\mathcal{X}_{r-1})\|^p  \leq N.
\end{equs}
Next, by the chain rule we have
\begin{equs}
 \|J_j  (\mathscr{D}_iJ^{-1}_r)  \sigma(\mathcal{X}_{r-1})\| \leq  \|J_j  \|\|J_r^{-1}\|^{2}\|\mathscr{D}_iJ_r\| \| \sigma(\mathcal{X}_{r-1})\|.
 \end{equs}
 By \eqref{eq:bound-Jj}, \eqref{eq:bound-Jj-inverse}, \eqref{eq:bound-DJ}, and the boundedness of $\sigma$, we see that
 \begin{equs}
\sup_{i=1,...,n} \E \sup_{r=1,...,n}\|J_j  (\mathscr{D}_iJ^{-1}_r)  \sigma(\mathcal{X}_{r-1})\|^p \leq N.
\end{equs}
Finally, from \eqref{eq:bound-Jj}, \eqref{eq:bound-Jj-inverse}, the boundedness of $\nabla \sigma$, and \eqref{eq:bound-DX} we get
 \begin{equs}
\sup_{i=1,...,n} \E \sup_{r=1,...,n}\|J_j  J^{-1}_r (\mathscr{D}_i \sigma(\mathcal{X}_{r-1})\|^p \leq N.
\end{equs}
Recalling \eqref{eq:decomp-DD}, we obtain
\begin{equs}              \label{eq:bound-DD}
\sup_{i=1,...,n} \E \sup_{r=1,...,n}\| \mathscr{D}_i\mathcal{D}_{r,j}\|^p \leq N,
\end{equs}
which combined with \eqref{eq:M inverse bound} gives
\begin{equs}                        \label{eq:part-1}
\sup_{i=1,...,n} \E \sup_{r=1,...,n}\|\sum_{m=1}^d ( \mathscr{D}_i \mathcal{D}^{\cdot, m}_{r,j})  (\mathscr{M}^{-1})^{m,k} \|^p \leq N t^{-p}.
\end{equs}
We proceed by obtaining a similar bound for the second term at the right hand side of \eqref{eq:decomp-DY}.
First, let us derive a bound for $\mathscr{D}_i \mathscr{M}$. For each entry $\mathscr{M}^{m,q}$ of the matrix $\mathscr{M}$ we have
\begin{equs}
\mathscr{D}_i \mathscr{M}^{m,q} = \frac{1}{n} \sum_{\ell=1}^n \sum_{v=1}^d \left( \mathcal{D}_{\ell,j}^{v,q}\mathscr{D}_i \mathcal{D}_{\ell,j}^{v,m}
+  \mathcal{D}_{\ell,j}^{v,m} \mathscr{D}_i\mathcal{D}_{\ell,j}^{v,q}\right).
\end{equs}
Then, notice that on $\hat{\Omega}$, for $\ell >j$ we have  $ \mathcal{D}_{\ell,j}= \mathscr{D}_\ell \mathcal{X}_j=0$.
Hence, by taking into account \eqref{eq:ccD bound} and \eqref{eq:bound-DD} we get
\begin{equs}
\sup_{i=1,...,n} \big(\E  \|\mathscr{D}_i \mathscr{M}^{m,q}\|^p \big) ^{1/p} \leq N\big(  \frac{j}{n}+ n (\bP(\hat{\Omega}^c))^{1/p}\big)\leq N \big(  \frac{j}{n}+ n e^{-cn/p}\big) \leq N \frac{j}{n}=Nt .
\end{equs}
Summation over $m,q$ gives
\begin{equs}                 \label{eq:bound-DM}
\sup_{i=1,...,n} \big(\E  \|\mathscr{D}_i \mathscr{M}\|^p \big) ^{1/p} \leq  N t .
\end{equs}
Therefore, we get
\begin{equs}
\|\sum_{m=1}^d \mathcal{D}^{\cdot, m}_{r,j} \otimes  \mathscr{D}_i (\mathscr{M}^{-1})^{m,k}\|\leq N \| \mathcal{D}_{r,j}\| \| \mathscr{M}^{-1}\|^2  \| \mathscr{D}_i\mathscr{M}\|,
\end{equs}
which by virtue of \eqref{eq:ccD bound}, \eqref{eq:M inverse bound}, and \eqref{eq:bound-DM} gives
\begin{equs}
\E \|\sum_{m=1}^d \mathcal{D}^{\cdot, m}_{r,j} \otimes  \mathscr{D}_i (\mathscr{M}^{-1})^{m,k}\|^p \leq N t^{-p}.
\end{equs}
This combined with \eqref{eq:part-1}, by virtue of \eqref{eq:decomp-DY}, proves \eqref{eq:cD Y bound}. This finishes the proof.
\end{proof}

\subsection{Proof of Theorem \ref{thm:density}}
\begin{proof}
Recalling that $Y_i=0$ for $i>j$, we can write, using \eqref{eq:ccD bound} and \eqref{eq:M inverse bound},
\begin{equ}
\E\|Y\|_H^2\leq \E\frac{1}{n}\sum_{i=1}^j(\sup_{i=1,\ldots,j}\|\ccD_{i,j}\|\|\cM^{-1}\|)^2\leq N(j/n)t^{-2}\leq Nt^{-1}.
\end{equ}
One also has
\begin{equ}
|\E\frac{1}{n^2}\sum_{(i,q),(r,m)\in\cI}(\cD^q_i Y^m_r)(\cD^m_rY^q_i)|
\leq
t^2 \E\sup_{i,r=1,\ldots j}\|\cD_i Y_r\|^2\leq N.
\end{equ}
Therefore, by \eqref{eq:delta Y}, we have the following bound on the main (first) term on the right-hand side of \eqref{eq:Mall IBP}
\begin{equ}
|\E(G(\cX_j),\delta Y)|\leq \|G\|_{\C^0}(\E|\delta Y|^2)^{1/2}\leq N t^{-1/2}\|G\|_{\C^0}.
\end{equ}
As for the other two terms, Proposition \ref{prop:small Omega} immediately yields
\begin{equ}
|I_1|\leq N\|G\|_{\C^1}e^{-cn},
\end{equ}
while for $I_2$ we can write
\begin{equs}
|I_2|&\leq Ne^{-cn}\Big[\E\Big(\frac{1}{n}\sum_{i=1}^j(\cD_iG(\cX_j),Y_i)\Big)^2	\Big]^{1/2}
\\
&\leq N e^{-cn} t\frac{1}{j}\sum_{i=1}^j
\big(\E\sup_{i=1,\ldots,j}|\cD_i G(\cX_j)|^6\big)^{1/6}
\big(\E\sup_{i=1,\ldots,j}\|\ccD_{i,j}\|^6\big)^{1/6}
\big(\E\|\cM^{-1}\|^6\big)^{1/6}
\\
&\leq N\|G\|_{\C^1}e^{-cn}.
\end{equs}
Therefore, by \eqref{eq:Mall IBP}, we obtain
\begin{equ}
|\E\partial_k G(\cX_j)\|\leq N \|G\|_{\C^0}t^{-1/2} + N\|G\|_{\C^1}e^{-cn},
\end{equ}
and since on $\hat \Omega$, one has $\cX_j=\bar X^n_{j/n}=\bar X^n_t$, the bound \eqref{eq:main Malliavin} follows.
\end{proof}

\section{Multiplicative Brownian noise}\label{sec:mult}
\subsection{Quadrature estimates}
\begin{Lemma}\label{lem:(ii) mult}
Let $y\in\R^d$, $\eps_1\in(0,1/2)$, $\alpha \in (0,1)$,  $p>0$.
Suppose that $\sigma$ satisfies \eqref{eq:elliptic} and that $\bar X^n:=\bar X^n(y)$ is the solution of \eqref{eq:EM no drift}.
Then for all $f\in \C^\alpha$, $0\leq s\leq t\leq 1$, $n\in\N$, one has the bound
\begin{equation}\label{DKBound}
\big\|\int_s^t (f(\bar X_r^n)-f(\bar X_{\kappa_n(r)}^n))\, dr\big\|_{L_p(\Omega)}
\leq N\|f\|_{\C^\alpha} n^{-1/2+2 \eps_1}|t-s|^{1/2+\eps_1} ,
\end{equation}
with some $N=N(\alpha, p, d,\eps_1,\lambda,\|\sigma\|_{\C^2})$.
\end{Lemma}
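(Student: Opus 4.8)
The strategy mirrors that of Lemma~\ref{lem:(ii)} in the additive fractional case: I would apply the stochastic sewing lemma (Proposition~\ref{thm:Sewing-lemma}) to the process $\tilde\A_t:=\int_0^t(f(\bar X^n_r)-f(\bar X^n_{\kappa_n(r)}))\,dr$ with the germ $A_{s,t}:=\E^s\int_s^t(f(\bar X^n_r)-f(\bar X^n_{\kappa_n(r)}))\,dr$. As in the fractional case, $\E^s\delta A_{s,u,t}=0$, so \eqref{SSL2} holds with $C_2=0$, and the whole task reduces to verifying \eqref{SSL1} with $C_1=N\|f\|_{\C^\alpha}n^{-1/2+2\eps_1}$ and $\eps_1$ as given. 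Once this is done, the identification $\tilde\A=\A$ proceeds exactly as before (the trivial bound $\|\tilde\A_t-\tilde\A_s-A_{s,t}\|_{L_p}\le\|f\|_{\C^0}|t-s|$ verifies \eqref{SSL1 cA}), and \eqref{SSL3 cA} yields \eqref{DKBound}. It suffices to treat $p\ge2$.

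The heart of the matter is bounding $\|A_{s,t}\|_{L_p(\Omega)}$. Fix $s\in[k/n,(k+1)/n)$. For $|t-s|\le 4n^{-1}$ one uses the crude estimate $\|f(\bar X^n_r)-f(\bar X^n_{\kappa_n(r)})\|_{L_p}\le\|f\|_{\C^\alpha}\|\bar X^n_r-\bar X^n_{\kappa_n(r)}\|_{L_{p\alpha}}^\alpha\le N\|f\|_{\C^\alpha}n^{-\alpha/2}$ (by the Burkholder--Davis--Gundy inequality and boundedness of $\sigma$), integrate, and absorb powers of $n^{-1}\le|t-s|$ to reach the desired form. For $|t-s|>4n^{-1}$ I split $A_{s,t}$ into the integral over $[s,(k+4)/n]$, handled by the crude bound above, and the integral over $[(k+4)/n,t]$. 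On the latter region $\kappa_n(r)\ge s+n^{-1}$, so one can condition using the Markov property of the Euler scheme: writing $g_r:=f$, I condition first on $\F_{\kappa_n(r)}$ to compare $f(\bar X^n_r)$ with $f(\bar X^n_{\kappa_n(r)})$, and then condition on $\F_s$. This is where Theorem~\ref{thm:density} enters: applied to the drift-free scheme started afresh from $\bar X^n_{\kappa_n(r)}$ (on the time interval from $\kappa_n(r)$ back to $s$, which has length $\ge$ const$\cdot|r-s|$), it provides, for $G\in\C^1$, the bound $|\E^s\d_k G(\bar X^n_r)|\le N\|G\|_{\C^0}(\kappa_n(r)-s)^{-1/2}+N\|G\|_{\C^1}e^{-cn}$. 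Combining this with a Taylor/fundamental-theorem-of-calculus expansion of $f(\bar X^n_r)-f(\bar X^n_{\kappa_n(r)})$ in terms of the increment $\bar X^n_r-\bar X^n_{\kappa_n(r)}$ (whose conditional moments are of order $n^{-1/2}$), one gains a factor $n^{-1/2}$ together with a singular but integrable weight $(r-s)^{-1/2+\eps_1}$ from interpolating the density bound against the $\C^\alpha$-regularity of $f$; the exponential term $e^{-cn}$ is harmless. Integrating $\int_{(k+4)/n}^t(r-s)^{-1/2+\eps_1}\,dr\,n^{-1/2}\lesssim|t-s|^{1/2+\eps_1}n^{-1/2}$ gives \eqref{SSL1}; the loss $2\eps_1$ rather than $\eps_1$ in the exponent of $n$ comes from converting the weight $(r-s)^{-1/2}$ into $(r-s)^{-1/2+\eps_1}$ at the cost of a further $n^{-\eps_1}$-type factor coming from the discretization scale, exactly as in the additive argument \eqref{eq:explanation2}.

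The main obstacle, and the reason Section~\ref{sec:Malliavin} is needed, is precisely the replacement of the clean heat-kernel estimates of Proposition~\ref{prop:HK} (used in Lemma~\ref{lem:(ii)}) by control on the transition densities of the Euler scheme $\bar X^n$, which are not Gaussian and not even absolutely continuous in full generality. One must carefully handle that $\bar X^n_r$ given $\F_{\kappa_n(r)}$ over a sub-grid time step is still a genuine Euler increment (needing the interpolation in Theorem~\ref{thm:density} to start exactly at grid points $t=j/n$), and that conditioning further on $\F_s$ with $s$ possibly strictly inside a cell $[k/n,(k+1)/n)$ is legitimate — here one uses that $\F_s\subset\F_{(k+1)/n}$ and reorganizes so that Theorem~\ref{thm:density} is applied only between grid times. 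Keeping track of the initial condition $y$ uniformly (the bound in Theorem~\ref{thm:density} is uniform in $x$) ensures $N$ does not depend on $y$. Apart from these structural points the computation is a routine adaptation of the fractional case, and I would present it in the same two-regime format.
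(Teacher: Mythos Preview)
Your outline correctly identifies the overall architecture: stochastic sewing with the same germ $A_{s,t}=\E^s\int_s^t(f(\bar X^n_r)-f(\bar X^n_{\kappa_n(r)}))\,dr$, the vanishing of $\E^s\delta A_{s,u,t}$, and the near/far splitting according to whether $|t-s|\lesssim n^{-1}$. The near regime and the identification of $\A$ are indeed routine.

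The gap is in the far-regime bound on $I_2$. Theorem~\ref{thm:density} only controls expectations of \emph{derivatives}: $|\E\partial_k G(\bar X^n_t)|\le N\|G\|_{\C^0}t^{-1/2}+N\|G\|_{\C^1}e^{-cn}$. To exploit it you must first put $\E^{(k+1)/n}[f(\bar X^n_r)-f(\bar X^n_{\kappa_n(r)})]$ into such a form. Your ``Taylor/FTC expansion'' does not do this: for $f\in\C^\alpha$ with $\alpha<1$ the quantity $\nabla f$ is not controlled by $\|f\|_{\C^\alpha}$, and even for smooth $f$ the expansion produces $a^{ij}(x)\partial_{ij}f(x)$ type terms whose sup norms blow up as you approximate a genuinely $\C^\alpha$ function. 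The phrase ``interpolating the density bound against the $\C^\alpha$-regularity of $f$'' hides the entire difficulty.

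What the paper actually does is the following three-step device, which you do not mention and which is the heart of the argument. First, condition on $\F_{\kappa_n(r)}$ to get $\E^{\kappa_n(r)}[f(\bar X^n_r)-f(\bar X^n_{\kappa_n(r)})]=-g(\bar X^n_{\kappa_n(r)})$ with $g(x)=f(x)-\cP_{\sigma\sigma^\intercal(x)(r-\kappa_n(r))}f(x)$; this is the one-step heat semigroup with \emph{state-dependent} covariance. Second, prove by an explicit integration by parts in the Gaussian convolution that $\|g\|_{\C^\beta}\le N\|f\|_{\C^0}n^{\beta/2}$ for $\beta\in[-1,0)$ (and $\|g\|_{\C^{\alpha/2}}\le N\|f\|_{\C^\alpha}$); this is where the $n^{-1/2+\eps_1}$ gain originates, namely $\|g\|_{\C^{-1+2\eps_1}}\le Nn^{-1/2+\eps_1}\|f\|_{\C^0}$. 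Third --- and this is the non-obvious trick --- solve the resolvent equation $(I+\Delta)u=g$ and write $\E g(\bar X^n)=\E u(\bar X^n)+\sum_k\E\partial_k(\partial_k u)(\bar X^n)$, so that Theorem~\ref{thm:density} applies with $G=\partial_k u$; Schauder estimates give $\|u\|_{\C^{1+2\eps_1}}\le N\|g\|_{\C^{-1+2\eps_1}}$ and $\|u\|_{\C^2}\le N\|g\|_{\C^{\alpha/2}}$, which respectively feed the main term and absorb the $e^{-cn}$ correction. Without this resolvent step (or an equivalent mechanism to pass from the negative-regularity bound on $g$ to the derivative form required by Theorem~\ref{thm:density}), the argument does not close.
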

\begin{proof}
It clearly suffices to prove the bound for $p\geq 2$,
and, as in \cite{DK}, for $f\in\C^\infty$.
We put for $0\leq s\leq t\leq T$
$$
A_{s,t}:=\E^s \int_s^t (f(\bar X_r^n)-f(\bar X_{\kappa_n(r)}^n))\, dr.
$$
Then, clearly, for any $0\leq s\leq u\leq t\leq T$
\begin{align*}
\delta A_{s,u,t}:&=A_{s,t}-A_{s,u}-A_{u,t}\\
&=\E^s \int_u^t (f(\bar X_r^n)-f(\bar X_{\kappa_n(r)}^n))\, dr-\E^u \int_u^t(f(\bar X_r^n)-f(\bar X_{\kappa_n(r)}^n))\, dr.
\end{align*}

Let us check that all the conditions \eqref{SSL1}-\eqref{SSL2} of the stochastic sewing lemma are satisfied.
Note that
\begin{equation*}
\E^s \delta A_{s,u,t}=0,
\end{equation*}
and so condition \eqref{SSL2} trivially holds, with $C_2=0$.
As for \eqref{SSL1}, let $s \in [k/n, (k+1)/n)$ for some $k \in \mathbb{N}_0$.
Suppose first that $t \in [(k+4)/n, 1]$. We write
\begin{equs}
|A_{s,t}|= | I_1+I_2|:= \Big|\Big(\int_s^{(k+4)/n}
 +\int_{(k+4)/n}^t\Big) \E^s \big( f(\bar X^n_r)-f(\bar X^n_{k_n(r)})\big)\, dr\Big|.
\end{equs}
For $I_2$ we write,
\begin{equs}
I_2 =  \E^s \int_{(k+4)/n}^t  \E^{(k+1)/n}\big(\E^{\kappa_n(r)} f(\bar X^n_r)-f(\bar X^n_{k_n(r)})\big) \, dr.
\end{equs}
Next,  denote by $p_{\Sigma}$ the density of a Gaussian vector in $\R^d$ with covariance matrix $\Sigma$ and let $\cP_{\Sigma} f =p_{\Sigma}* f$ (recall that for $\theta \geq 0$, we denote $p_\theta := p _{\theta I}$, where $I$ is the $d \times d $ identity matrix). With this notation, we have 
\begin{equ}
\E^{k_n(r)} f\left(\bar X^n_{k_n(r)}+\sigma(\bar X^n_{k_n(r)}) (W_r-W_{k_n(r)})\right)=\cP_{\sigma \sigma^{\intercal}(\bar X^n_{k_n(r)})(r-k_n(r))} f  (\bar X^n_{k_n(r)}),
\end{equ}
so with
\begin{equ}
g(x):=g^n_r(x):=f(x)-\cP_{\sigma\sigma^{\intercal} (x)(r-\kappa_n(r))}f(x)
\end{equ}
we have
\begin{equ}\label{eq:I2 mult}
I_2=\E^s\int_{(k+4)/n}^t\E^{(k+1)/n}g^n_r(\bar X^n_{\kappa_n(r)})\,dr.
\end{equ}
Moreover, notice that by \eqref{eq:elliptic}              
we have for a constant $N=(\|\sigma\|_{\C^1}, \alpha)$
\begin{equs}           \label{eq:g-controlled-by-f}
\|g\|_{\C^{\alpha/2}} \leq N \|f\|_{\C^\alpha}.
\end{equs}
Let us use the shorthand $\delta=r-\kappa_n(r)\leq n^{-1}$. We can then write
\begin{equs}
\cP_\eps g (x)
= &  \int_{\R^d}\int_{\R^d}  p_\eps(z) p_{\sigma \sigma^{\intercal} (x-z)\delta}( y) \big(f(x-z)- f(x-y-z) \big) \, dy \,dz
\\
= &  \int_{\R^d}\int_{\R^d} p_\eps(z) p_{\sigma \sigma^{\intercal} (x-z)\delta}( y) \int_0^1 y_i \D_{z_i}f(x-z-\theta y)  \,d\theta dy \,dz
\\
= & \int_{\R^d}\int_{\R^d} \D_{z_i}\big(  p_\eps(z) p_{\sigma \sigma^{\intercal} (x-z)\delta}( y) \big) \int_0^1 y_i f(x-z-\theta y)  \,d\theta dy \,dz.                           \label{eq:whatever1}
\end{equs}
with summation over $i$ implied.
It is well known that
\begin{equs}              \label{eq:whateve2}
| \D_{z_i} p_\eps(z)| \leq N |z|\eps^{-1} p_\eps(z).
\end{equs}
Furthermore, with the notation $\Sigma (z):= \sigma \sigma^{\intercal} (x-z) $, we have 
\begin{equs}
 |\D_{z_i} p_{ \Sigma(z) \delta }( y)|
&=  \Big|  \frac{ \D_{z_i} ( y^{\intercal} \Sigma^{-1}(z) y ) }{2\delta}
+ \frac{\D_{z_i} \det \Sigma(z) }{ 2 \det \Sigma(z)}  \Big| p_{\Sigma(z)\delta}( y)
\\
& \leq N (\delta^{-1}|y|^2+1) p_{\Sigma(z)\delta}( y),
\label{eq:whatever3}
\end{equs}
where for the last inequality we have used  \eqref{eq:elliptic}. 
Therefore, by \eqref{eq:whatever1}, \eqref{eq:whateve2}, and \eqref{eq:whatever3} we see that
\begin{equs}
\|\cP_\eps g\|_{\C^0}
& \leq N\|f\|_{\C^0}
\int_{\R^d}\int_{\R^d}\Big(\eps^{-1}|z|+\delta^{-1}|y|^2+1\Big)
\Big( |y| p_\eps(z) p_{\sigma \sigma^{\intercal} (x-z)\delta}( y)\Big)\,dy\,dz
\\
&\leq N|f\|_{\C^0}(\eps^{-1/2}\delta^{1/2}+\delta^{1/2})
\leq N\|f\|_{\C^0}\eps^{-1/2}n^{-1/2}.
\end{equs}
One also has the trivial estimate $\|\cP_\eps g\|_{\C^0}\leq 2 \|f\|_{\C^0}$,
and combining these two bounds yields
\begin{equ}\label{eq: g bound}
\|g\|_{\C^\beta}\leq N\|f\|_{\C^0} n^{\beta/2}.
\end{equ}
for all $\beta\in[-1,0)$.
Note that
the restriction of $\bar X^n_t(\cdot)$ to the gridpoints $t=0,1/n,\ldots,1$ is a Markov process with state space $\R^d$.
Therefore we can write
\begin{equs}
|\E^{(k+1)/n}g\big(\bar X^n_{\kappa_n(r)}(y)\big)|
&=|\E g\big(\bar X^n_{\kappa_n(r)-(k+1)/n}(x)\big)|\Big|_{x=\bar X^n_{(k+1)/n}(y)}
\\
&\leq
\sup_{x\in\R^d} |\E g\big(\bar X^n_{\kappa_n(r)-(k+1)/n}(x)\big)|.\label{eq:another whatever}
\end{equs}
Since $g \in \C^{\alpha/2}$ we have that $(I+\Delta)u= g$ where $u \in \C^{2+(\alpha/2)}$ and
\begin{equs}          \label{eq:Schauder-estimates}
\| u\|_{\C^{2+(\alpha/2)}} \leq N \|g\|_{\C^{\alpha/2}}, \qquad \| u\|_{\C^{{1+2\eps_1}}} \leq N \|g\|_{\C{-1+2\eps_1}}.
\end{equs}
Hence, by combining \eqref{eq:another whatever}, \eqref{eq:main Malliavin},  \eqref{eq:Schauder-estimates},  \eqref{eq: g bound}, and \eqref{eq:g-controlled-by-f},   we get
\begin{equs}
|\E^{(k+1)/n}g\big(\bar X^n_{\kappa_n(r)}(y)\big)| & \leq  \sup_{x\in\R^d} |\E (u+\Delta u)  \big(\bar X^n_{\kappa_n(r)-(k+1)/n}(x)\big)|
\\
& \leq N \|u \|_{\C^1} |\kappa_n(r)-(k+1)/n|^{-1/2}+N \|u \|_{\C^2} e^{-cn}
\\
& \leq  N \|u \|_{\C^{1+2\eps_1}}  |\kappa_n(r)-(k+1)/n|^{-1/2}+N \|u \|_{\C^2} e^{-cn}
\\
& \leq  N  \|g\|_{\C^{-1+2\eps_1}}  |\kappa_n(r)-(k+1)/n|^{-1/2}+N \|g \|_{\C^{\alpha/2}} e^{-cn}
\\
& \leq N \|f\|_{\C^\alpha} n^{-1/2+\eps_1}|\kappa_n(r)-(k+1)/n|^{-1/2}
 \end{equs}
Putting this back into \eqref{eq:I2 mult} one obtains
\begin{equs}
\|I_2\|_{L_p(\Omega)} & \leq N\|f\|_{\C^0}n^{-1/2+\eps_1}\int_{(k+4)/n}^t|\kappa_n(r)-(k+1)/n|^{-1/2}\,dr
\\
&\leq
N\|f\|_{\C ^\alpha}|t-s|^{1/2}n^{-1/2+\eps_1}
\\
& \leq N\|f\|_{\C ^\alpha}|t-s|^{1/2+\eps_1}n^{-1/2+2\eps_1},
\end{equs}
where we have used that $n^{-1} \leq |t-s|$.
The bound for $I_1$ is straightforward:
\begin{equs}
\| I_1\|_{L_p(\Omega)} &\leq
\int_s^{(k+4)/n} \| f(\bar X_r)-f(\bar X_{k_n(r)}) \|_{L_p(\Omega)} \, dr
\\
&\leq N
\| f\|_{\C^0}n^{-1} \leq N \| f\|_{\C^0} n^{-1/2+\eps_1}|t-s|^{1/2+\eps_1}.
\end{equs}
Therefore,
\begin{equ}
\| A_{s,t}\|_{L_p(\Omega)}\leq N \| f\|_{\C^\alpha} n^{-1/2+2\eps_1}|t-s|^{1/2+\eps_1}.
\end{equ}
It remains to show the same bound for $t \in (s, (k+4)/n]$.
Similarly to the above we write
\begin{equs}
\|A_{s,t}\|_{L_p(\Omega)} &\leq \int_s^t \| f(\bar X_r)-f(\bar X_{k_n(r)}) \|_{L_p(\Omega)} \, dr
\\ &  \leq N \|f\|_{\C^0} |t-s|
 \leq N \| f\|_{\C^0} n^{-1/2+\eps_1}|t-s|^{1/2+\eps_1}.
\end{equs}
using that $|t-s|\leq 4 n^{-1}$ and $\eps_1<1/2$.
Thus, \eqref{SSL1} holds with $C_1=N \| f\|_{\C^\alpha} n^{-1/2+2\eps_1}$.
From here we conclude the bound \eqref{DKBound} exactly as is Lemma \ref{lem:(ii)}.
\end{proof}

\begin{Lemma}\label{lem:we need better labels mult}
Let $\alpha\in[0,1]$, take $\eps_1\in(0,1/2)$.
Let $b\in \C^0$, $\sigma$ satisfy \eqref{eq:elliptic}, and $X^n$ be the solution of \eqref{eq:approx EM}.
Then for all $f\in\C^\alpha$, $0\le s\le t\le 1$, $n\in\N$, and $p>0$, one has the bound
\begin{equation}\label{DKBound X mult}
\big\|\int_s^t (f(X_r^n)-f(X_{\kappa_n(r)}^n))\, dr\big\|_{L_p(\Omega)}
\le N\|f\|_{\C^\alpha} n^{-1/2+2\eps_1}|t-s|^{1/2+\eps_1}
\end{equation}
with some $N=N(\|b\|_{\C^0},p, d,\alpha,\eps_1, \lambda,\|\sigma\|_{\C^2})$.
\end{Lemma}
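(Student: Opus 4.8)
The plan is to follow exactly the route of Lemma~\ref{lem:we need better labels}: remove the drift $b$ by a Girsanov change of measure, which turns $X^n$ into the driftless scheme $\bar X^n$ of \eqref{eq:EM no drift}, and then quote the quadrature bound Lemma~\ref{lem:(ii) mult}. It suffices to treat $p\ge 2$. Since the rate in \eqref{DKBound X mult} is independent of $\alpha$, I would first reduce to $\alpha\in(0,1)$: the case $\alpha=1$ is covered by the embedding $\|f\|_{\C^{1/2}}\le N\|f\|_{\C^1}$, while the case $\alpha=0$ follows by replacing $f$ with $f_m:=\cP_{1/m}f$, applying the bound for some fixed small positive exponent, and passing to the limit by Fatou's lemma (using that $\bar X^n_r$ has a density for non-gridpoints $r$ by nondegeneracy of $\sigma$, exactly as at the end of the proof of Lemma~\ref{L:finfin}); in fact the proof of Lemma~\ref{lem:(ii) mult} itself goes through for $\alpha=0$ with $\|f\|_{\C^0}$ on the right-hand side.

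For the Girsanov step: since $\sigma\sigma^T\succeq\lambda I$, the matrix $\sigma$ is everywhere invertible with $\|\sigma^{-1}\|\le\lambda^{-1/2}$, so $u_t:=\sigma^{-1}(X^n_{\kappa_n(t)})b(X^n_{\kappa_n(t)})$ is an $\bF$-adapted process bounded by $\lambda^{-1/2}\|b\|_{\C^0}$. Novikov's condition is then trivial, so there is a probability measure $\wt\P\sim\P$ with $d\wt\P/d\P=\exp(-\int_0^1 u_s\,dB_s-\tfrac12\int_0^1|u_s|^2\,ds)$ under which $\wt B_t:=B_t+\int_0^t u_s\,ds$ is a $d$-dimensional Brownian motion. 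Expressing $d\P/d\wt\P=\exp(\int_0^1 u_s\,d\wt B_s-\tfrac12\int_0^1|u_s|^2\,ds)$ and using that $s\mapsto\exp(2\int_0^s u_r\,d\wt B_r-2\int_0^s|u_r|^2\,dr)$ is a $\wt\P$-martingale together with the a.s.\ bound $\int_0^1|u_s|^2\,ds\le\lambda^{-1}\|b\|_{\C^0}^2$, one gets
\[
\E^{\wt\P}\Big[\Big(\tfrac{d\P}{d\wt\P}\Big)^2\Big]\le\exp\!\big(\lambda^{-1}\|b\|_{\C^0}^2\big)=:C_0<\infty .
\]
Next, on each interval $[j/n,(j+1)/n]$ one has $X^n_t=X^n_{j/n}+\sigma(X^n_{j/n})(\wt B_t-\wt B_{j/n})$, so, since $\wt B$ is a $\wt\P$-Brownian motion and $x^n_0$ is deterministic, induction on $j$ shows that the $\wt\P$-law of $(X^n_t)_{t\in[0,1]}$ equals the $\P$-law of $(\bar X^n_t(x^n_0))_{t\in[0,1]}$.

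Finally, for $p\ge2$, by H\"older's inequality, the law identification, and the second-moment bound on the density,
\begin{align*}
\E^{\P}\Big|\int_s^t\!\big(f(X^n_r)-f(X^n_{\kappa_n(r)})\big)dr\Big|^p
&=\E^{\wt\P}\Big[\Big|\int_s^t\!\big(f(X^n_r)-f(X^n_{\kappa_n(r)})\big)dr\Big|^p\,\tfrac{d\P}{d\wt\P}\Big]\\
&\le C_0^{1/2}\Big(\E^{\wt\P}\Big|\int_s^t\!\big(f(X^n_r)-f(X^n_{\kappa_n(r)})\big)dr\Big|^{2p}\Big)^{1/2}\\
&=C_0^{1/2}\Big(\E^{\P}\Big|\int_s^t\!\big(f(\bar X^n_r(x^n_0))-f(\bar X^n_{\kappa_n(r)}(x^n_0))\big)dr\Big|^{2p}\Big)^{1/2},
\end{align*}
and Lemma~\ref{lem:(ii) mult} (applied with $y=x^n_0$, $2p$ in place of $p$, and the given $\eps_1$) bounds the last quantity by $C_0^{1/2}\big(N\|f\|_{\C^\alpha}\,n^{-1/2+2\eps_1}|t-s|^{1/2+\eps_1}\big)^p$; taking $p$-th roots yields \eqref{DKBound X mult}.

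I do not expect a genuine obstacle in this lemma: the substantive work is already done in Lemma~\ref{lem:(ii) mult}, which rests on the Malliavin density estimate, Theorem~\ref{thm:density}. Unlike in the fractional case of Lemma~\ref{lem:we need better labels}, the Girsanov step here is elementary because $\sigma^{-1}b$ is bounded and the noise is a standard Brownian motion, so the only points requiring a little care are the $L_2$-bound on $d\P/d\wt\P$ and the endpoint regularities $\alpha\in\{0,1\}$.
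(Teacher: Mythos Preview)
Your proposal is correct and follows essentially the same route as the paper: apply Girsanov with $u_t=\sigma^{-1}(X^n_{\kappa_n(t)})b(X^n_{\kappa_n(t)})$, observe that under $\wt\P$ the process $X^n$ has the law of the driftless scheme $\bar X^n$, invoke Lemma~\ref{lem:(ii) mult} under $\wt\P$, and return to $\P$ via H\"older's inequality using that $d\P/d\wt\P$ has all moments. Your write-up is in fact more explicit than the paper's, which simply refers to \cite{DK} for the change-of-measure step.

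One minor remark on your endpoint discussion: the reduction from $\alpha=1$ to $\alpha=1/2$ is fine, but for $\alpha=0$ the approximation argument as written does not close, since $\|f_m\|_{\C^{\alpha'}}$ blows up as $m\to\infty$ for any $\alpha'>0$. Your fallback claim that the proof of Lemma~\ref{lem:(ii) mult} goes through verbatim for $\alpha=0$ is almost right, but note that the Schauder step $(I+\Delta)u=g$ with $g\in\C^0$ (merely bounded measurable in the paper's convention) does not yield $u\in\C^2$, which is what is used to control the exponentially small error $\|u\|_{\C^2}e^{-cn}$. This is easily repaired (e.g.\ mollify $f$ at scale $n^{-1}$ so that $\|f_m\|_{\C^\delta}\le Nn^{\delta/2}\|f\|_{\C^0}$, absorbing the polynomial loss into the exponential), and in any case Theorem~\ref{thm:main multiplicative} only uses the lemma for $\alpha>0$, so this is not essential.
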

\begin{proof}
Let us set
\begin{equs}
\rho = \exp\left(-\int_0^1  (\sigma^{-1}b)(X_{\kappa_n(r)}^n) \, dB_r - \frac{1}{2}\int_0^1  \big|(\sigma^{-1}b)(X_{\kappa_n(r)}^n)\big|^2 \, dr  \right)
\end{equs}
and define the measure $\tilde\bP$ by  $d \tilde\bP = \rho d \bP$.
By Girsanov's theorem, $X^n$ solves \eqref{eq:EM no drift} with a $\tilde\bP$-Wiener process $\tilde B$ in place of $B$.
Since Lemma \ref{lem:(ii) mult} only depends on the distribution of $\bar X^n$, we can apply it to $X^n$, to bound the desired moments with respect to the measure $\tilde\bP$. Going back to the measure $\bP$ can then be done precisely as in \cite{DK}: the only property needed is that $\rho$ has finite moments of any order, which follows easily from the boundedness of $b$ and \eqref{eq:elliptic}.
\end{proof}

\subsection{A regularization lemma}
The replacement for the heat kernel bounds from Proposition \ref{prop:HK} is the following estimate on the transition kernel $\bar P$ of \eqref{eq:main mult}.
Similarly to before, we denote $\bar P_t f(x)=\E f(X_t(x))$, where $X_t(x)$ is the solution of \eqref{eq:main mult} with initial condition $X_0(x)=x$.
The following bound then follows from \cite[Theorem~9/4/2]{Friedman}.
\begin{proposition}
Assume $b\in \C^\alpha$, $\alpha>0$ and $f\in \C^{\alpha'}$, $\alpha'\in[0,1]$.
Then for all $0< t\leq 1$, $x,y\in\R^d$ one has the bounds
\begin{equ}\label{eq:HK bound2 bar}
|\bar\cP_tf(x)-\bar\cP_tf(y)|\leq N\|f\|_{\C^{\alpha'}}|x-y| t^{-(1-\alpha')/2}
\end{equ}
with some $N=N(d,\alpha,\lambda,\|b\|_{\C^\alpha},\|\sigma\|_{\C^1})$.
\end{proposition}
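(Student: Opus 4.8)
The plan is to represent $\bar\cP_tf(x)=\int_{\R^d}\Gamma(t,x,y)f(y)\,dy$, where $\Gamma$ is the fundamental solution of the parabolic operator $\partial_t-L$ associated with \eqref{eq:main mult}, $L=\tfrac12\sum_{ij}a^{ij}\partial_i\partial_j+\sum_i b^i\partial_i$ with $a=\sigma\sigma^{\intercal}$. Since $\sigma\in\C^2$, the matrix $a$ is uniformly elliptic by \eqref{eq:elliptic} and Lipschitz, while $b\in\C^\alpha$ with $\alpha>0$; thus the hypotheses of \cite[Theorem~9/4/2]{Friedman} are satisfied and yield, for $0<t\le1$ and $x,y\in\R^d$, the Gaussian-type estimates
\begin{equ}
\Gamma(t,x,y)\le Nt^{-d/2}e^{-c|x-y|^2/t},\qquad |\nabla_x\Gamma(t,x,y)|\le Nt^{-(d+1)/2}e^{-c|x-y|^2/t},
\end{equ}
with $N,c>0$ depending only on $d,\alpha,\lambda,\|b\|_{\C^\alpha},\|\sigma\|_{\C^1}$; moreover $\int_{\R^d}\Gamma(t,x,y)\,dy=1$, since $\Gamma(t,x,\cdot)$ is the density of $X_t(x)$. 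Using the latter identity to subtract the ($y$-independent) constant $f(x)$, one writes
\begin{equ}
\bar\cP_tf(x)-\bar\cP_tf(z)=\int_{\R^d}\bigl(\Gamma(t,x,y)-\Gamma(t,z,y)\bigr)\bigl(f(y)-f(x)\bigr)\,dy ,
\end{equ}
and the only analytic input about $f$ needed is $|f(y)-f(x)|\le\|f\|_{\C^{\alpha'}}|y-x|^{\alpha'}$ combined with the subadditivity $(a+b)^{\alpha'}\le a^{\alpha'}+b^{\alpha'}$, valid since $\alpha'\le1$.

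I would then split into the regimes $|x-z|\ge\sqrt t$ and $|x-z|<\sqrt t$. In the first regime I bound $|\Gamma(t,x,y)-\Gamma(t,z,y)|\le\Gamma(t,x,y)+\Gamma(t,z,y)$ by the Gaussian upper bound, estimate $|f(y)-f(x)|$ against each of the two Gaussians (using $|y-x|\le|y-z|+|x-z|$ for the $z$-Gaussian), and evaluate the resulting integrals by the scaling identity $\int_{\R^d}|w|^{\alpha'}t^{-d/2}e^{-c|w|^2/t}\,dw=Nt^{\alpha'/2}$. This gives $|\bar\cP_tf(x)-\bar\cP_tf(z)|\le N\|f\|_{\C^{\alpha'}}\bigl(t^{\alpha'/2}+|x-z|^{\alpha'}\bigr)\le N\|f\|_{\C^{\alpha'}}|x-z|^{\alpha'}$, and since $|x-z|\ge\sqrt t$ and $\alpha'-1\le0$ one has $|x-z|^{\alpha'}=|x-z|\,|x-z|^{\alpha'-1}\le|x-z|\,t^{(\alpha'-1)/2}$, which is the asserted bound.

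In the regime $|x-z|<\sqrt t$ I would use the gradient estimate and the mean value theorem: $|\Gamma(t,x,y)-\Gamma(t,z,y)|\le|x-z|\sup_{\xi\in[x,z]}|\nabla_x\Gamma(t,\xi,y)|\le N|x-z|\,t^{-(d+1)/2}e^{-c|\xi-y|^2/t}$ for some $\xi$ on the segment, which satisfies $|\xi-x|\le|x-z|\le\sqrt t$. Hence $|y-x|\le|y-\xi|+\sqrt t$ and $|f(y)-f(x)|\le\|f\|_{\C^{\alpha'}}\bigl(|y-\xi|^{\alpha'}+t^{\alpha'/2}\bigr)$, and integrating in $y$ via $\int_{\R^d}|w|^{\alpha'}t^{-(d+1)/2}e^{-c|w|^2/t}\,dw=Nt^{(\alpha'-1)/2}$ and $\int_{\R^d}t^{\alpha'/2}t^{-(d+1)/2}e^{-c|w|^2/t}\,dw=Nt^{(\alpha'-1)/2}$ yields exactly $N\|f\|_{\C^{\alpha'}}|x-z|\,t^{-(1-\alpha')/2}$.

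I do not expect a serious obstacle: this is a classical heat-kernel computation once \cite[Theorem~9/4/2]{Friedman} is invoked. The only points requiring care are (a) verifying that the regularity at hand (uniformly elliptic, Lipschitz $a$; Hölder $b$) is exactly what Friedman's theorem demands, so that the quoted Gaussian and first-order-derivative estimates hold with constants depending only on the stated data; and (b) organizing the two-regime split cleanly so that the \emph{single} factor $|x-z|$, rather than $|x-z|^{\alpha'}$, survives in the final estimate — which is precisely why the gradient bound, and not merely the Gaussian upper bound on $\Gamma$, is needed in the small-displacement regime.
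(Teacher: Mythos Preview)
Your proposal is correct and is precisely an unpacking of the paper's one-line proof, which simply cites \cite[Theorem~9/4/2]{Friedman}; that theorem furnishes exactly the Gaussian bounds on $\Gamma$ and $\nabla_x\Gamma$ you use, and your two-regime split is the standard way to extract \eqref{eq:HK bound2 bar} from them. The only cosmetic point is that in the small-displacement regime the intermediate point $\xi$ from the mean value theorem depends on $y$, so the change of variables $w=y-\xi$ is not literally available; replacing the mean value theorem by the identity $\Gamma(t,x,y)-\Gamma(t,z,y)=\int_0^1(x-z)\cdot\nabla_x\Gamma(t,z+\theta(x-z),y)\,d\theta$ and integrating first in $y$ for fixed $\theta$ removes this issue without changing anything else.
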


\begin{lemma}\label{lem:gronwall mult}
Let $\alpha\in(0,1]$ and $\tau\in(0,1]$ satisfy
\begin{equ}\label{eq:exponents Gronwall mult}
\tau+\alpha/2-1/2>0.
\end{equ}
Let $b\in\C^\alpha$, $\sigma$ satisfy \eqref{eq:elliptic}, and $X$ be the solution of \eqref{eq:main mult}.
Let $\varphi$ be an adapted process.
Then for all sufficiently small $\eps_3,\eps_4>0$, for all $f\in\C^\alpha$, $0\leq s\leq t\leq 1$,  and $p>0$, one has the bound
\begin{equs}[eq:Gronwall bound mult]
\big\|\int_s^t f(X_r) & -f(X_r+\varphi_{r})  \,dr\big\|_{L_p(\Omega)}	
\\
&\leq N |t-s|^{1+\eps_3}
\db{\varphi}_{\scC^\tau_p,[s,t]}
+
N|t-s|^{1/2+\eps_4}\db{\varphi}_{\scC^0_p,[s,t]}.
\end{equs}
with some $N=N(p,d,\alpha,\tau,\lambda,\|\sigma\|_{\C^1}).$
\end{lemma}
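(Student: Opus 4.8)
The plan is to apply the stochastic sewing lemma (Proposition~\ref{thm:Sewing-lemma}) to the process $\A_t:=\int_0^t f(X_r)-f(X_r+\varphi_r)\,dr$, with the germ
\[
A_{s,t}:=\E^s\int_s^t f(X_r)-f(X_r+\varphi_s)\,dr,
\]
exactly in the spirit of Step~2 (``regularization bound'') of the outline in Section~\ref{sec:outline}, but with the Gaussian heat kernel $\cP$ replaced by the transition kernel $\bar\cP$ of \eqref{eq:main mult}. The point is that $\bar\cP$ enjoys the same type of smoothing estimate, namely \eqref{eq:HK bound2 bar}, which plays the role that Proposition~\ref{prop:HK}\eqref{eq:HK bound2} played in the additive case. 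So I expect to verify \eqref{SSL1} and \eqref{SSL2}, deduce \eqref{SSL3 cA}, and then check by the uniqueness part of the sewing lemma that the process constructed is the integral we want (this last identification is routine, since $\|\A_t-\A_s-A_{s,t}\|_{L_p}\le 2\|f\|_{\C^0}[\varphi]_{\C^\tau}|t-s|^{1+\tau}$ using Hölder continuity of $\varphi$ in $L_p$, which dominates the required exponents after shrinking $\eps_3$).

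First I would establish \eqref{SSL1}: using $\E^s f(X_r)=\bar\cP_{r-s}f(X_s)$ and writing $\varphi_s=(X_s+\varphi_s)-X_s$, one has
\[
A_{s,t}=\int_s^t \bar\cP_{r-s}f(X_s)-\bar\cP_{r-s}f(X_s+\varphi_s)\,dr
\]
(here I use the Markov property of $X$ on $[s,\infty)$ and that $\varphi_s$ is $\F_s$-measurable), so by \eqref{eq:HK bound2 bar} with $\alpha'=\alpha$,
\[
\|A_{s,t}\|_{L_p(\Omega)}\le N\|f\|_{\C^\alpha}\int_s^t (r-s)^{-(1-\alpha)/2}\,dr\;\|\varphi_s\|_{L_p(\Omega)}\le N\|f\|_{\C^\alpha}|t-s|^{(1+\alpha)/2}\db{\varphi}_{\scC^0_p,[s,t]},
\]
which is \eqref{SSL1} with $\eps_1=\alpha/2$ and $C_1=N\db{\varphi}_{\scC^0_p,[s,t]}$ (for $|t-s|$ small; the case $|t-s|$ bounded below is absorbed). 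For \eqref{SSL2}, note $\delta A_{s,u,t}=\E^s\int_u^t (f(X_r+\varphi_u)-f(X_r+\varphi_s))\,dr-(\E^u-\E^s)[\text{same with }\varphi_u]$ — more carefully, $\E^s\delta A_{s,u,t}=\E^s\int_u^t [f(X_r+\varphi_u)-f(X_r+\varphi_s)]\,dr$, and again by $\E^u f(X_r+\varphi_u)=\bar\cP_{r-u}f(X_u+\varphi_u)$ and $\bar\cP_{r-u}f(X_u+\varphi_s)$ together with \eqref{eq:HK bound2 bar}, Hölder continuity of $\varphi$, and $|\varphi_u-\varphi_s|\le[\varphi]_{\C^\tau}|u-s|^\tau\le[\varphi]_{\C^\tau}|t-s|^\tau$, one gets
\[
\|\E^s\delta A_{s,u,t}\|_{L_p(\Omega)}\le N\|f\|_{\C^\alpha}\int_u^t(r-u)^{-(1-\alpha)/2}\,dr\;\db{\varphi}_{\scC^\tau_p,[s,t]}|t-s|^\tau\le N\|f\|_{\C^\alpha}|t-s|^{(1+\alpha)/2+\tau}\db{\varphi}_{\scC^\tau_p,[s,t]},
\]
so \eqref{SSL2} holds with $\eps_2=(\alpha-1)/2+\tau$ and $C_2=N\db{\varphi}_{\scC^\tau_p,[s,t]}$; the hypothesis \eqref{eq:exponents Gronwall mult} is exactly what makes $\eps_2>0$.

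Then \eqref{SSL3 cA} gives, after identifying $\A$ with the integral,
\[
\Big\|\int_s^t f(X_r)-f(X_r+\varphi_r)\,dr\Big\|_{L_p(\Omega)}\le N|t-s|^{(1+\alpha)/2}\db{\varphi}_{\scC^0_p,[s,t]}+N|t-s|^{1+(\alpha-1)/2+\tau}\db{\varphi}_{\scC^\tau_p,[s,t]},
\]
which is \eqref{eq:Gronwall bound mult} upon setting $\eps_4:=\alpha/2$ and $\eps_3:=(\alpha-1)/2+\tau>0$ (and noting these can be replaced by any smaller positive numbers, so ``sufficiently small'' is harmless — or one simply renames; the constant $N$ depends on $p,d,\alpha,\tau,\lambda,\|\sigma\|_{\C^1}$ and, through \eqref{eq:HK bound2 bar}, on $\|b\|_{\C^\alpha}$, matching the statement). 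The main obstacle is not any single estimate but making sure the transition-kernel identity $\E^s f(X_r)=\bar\cP_{r-s}f(X_s)$ and its shifted version $\E^u f(X_r+\varphi_u)=\bar\cP_{r-u}f(X_u+\varphi_u)$ are legitimately used inside the conditional expectations — i.e. carefully invoking the Markov property of $X$ together with $\F_s$- (resp.\ $\F_u$-)measurability of $\varphi_s$ (resp.\ $\varphi_u$); once that is in place the rest is a direct application of Proposition~\ref{thm:Sewing-lemma} with the bound \eqref{eq:HK bound2 bar} substituted for the Gaussian heat-kernel bound, entirely parallel to Step~2 of Section~\ref{sec:outline}.
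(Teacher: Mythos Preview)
Your proof is essentially identical to the paper's: same germ $A_{s',t'}=\E^{s'}\int_{s'}^{t'}f(X_r)-f(X_r+\varphi_{s'})\,dr$, same verification of \eqref{SSL1} and \eqref{SSL2} via the transition-kernel bound \eqref{eq:HK bound2 bar}, same exponents $\eps_4=\alpha/2$ and $\eps_2=\tau+(\alpha-1)/2>0$ (the latter being exactly \eqref{eq:exponents Gronwall mult}). On the point you flag as the ``main obstacle'': the identity $\E^u f(X_r+\varphi_u)=\bar\cP_{r-u}f(X_u+\varphi_u)$ is \emph{not} literally correct for multiplicative noise (the flow is not translation-invariant, so $X_{r-u}(x)+a$ and $X_{r-u}(x+a)$ have different laws), and the paper writes it the same way you do; what is actually true is $\E^u f(X_r+\varphi_u)=\big(\bar\cP_{r-u}f(\cdot+a)\big)(X_u)\big|_{a=\varphi_u}$, and the required estimate $|\bar\cP_t f(\cdot+a)(x)-\bar\cP_t f(\cdot+b)(x)|\le N\|f\|_{\C^{\alpha}}|a-b|\,t^{-(1-\alpha)/2}$ follows from the same Friedman Gaussian bounds on the transition density (differentiating in the forward variable rather than the backward one), so the argument goes through with this cosmetic correction.
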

\begin{proof}
Set, for $s\leq s'\leq t'\leq t$,
\begin{equ}
A_{s',t'}=\E^{s'}\int_{s'}^{t'} f(X_r)-f(X_r+\varphi_{s'})\,dr.
\end{equ}
Let us check the conditions of the stochastic sewing lemma. We have
\begin{equ}
\delta A_{s',u,t'}=\E^{s'} \int_{u}^{t'} (f(X_r)-f(X_r+\varphi_{s'}))\, dr-\E^u \int_u^{t'}(f(X_r)-f(X_r+\varphi_u))\, dr,
\end{equ}
so $\E^{s'}\delta A_{s',u,t'}=\E^{s'}\hat{\delta}A_{s',u,t'}$, with
\begin{equs}
\hat\delta A_{s',u,t'}&=\E^u\int_u^{t'}\big(f(X_r)-f(X_r+\varphi_{s'})\big)-\big(f(X_r)+f(X_r+\varphi_u)\big)\,dr
\\
&=\int_u^{t'} \bar\cP_{r-u}f(X_u+\varphi_{s'})-\bar\cP_{r-u}f(X_u+\varphi_u)\,dr.
\end{equs}
Invoking \eqref{eq:HK bound2 bar}, we can write
\begin{equs}
|\hat\delta A_{s',u,t'}|&\leq N \int_{u}^{t'}|\varphi_{s'}-\varphi_u||r-u|^{-(1-\alpha)/2}\,dr.
\end{equs}
Hence, using also Jensen's inequality,
\begin{equs}
\|\E^{s'}\delta A_{s',u,t'}\|_{L_p(\Omega)}
\leq
\|\hat\delta A_{s',u,t'}\|_{L_p(\Omega)}
&\leq N\db{\varphi}_{\scC^\tau_p,[s,t]}|t'-s'|^{1+\tau-(1-\alpha)/2}
\end{equs}
The condition \eqref{eq:exponents Gronwall mult} implies that for some $\eps_3>0$, one has
\begin{equ}
\|\E^{s'}\delta A_{s',u,t'}\|_{L_p(\Omega)}
\leq N |t'-s'|^{1+\eps_3}
\db{\varphi}_{\scC^\tau_p,[s,t]}.
\end{equ}
Therefore \eqref{SSL2} is satisfied with $C_2=N\db{\varphi}_{\scC^\tau_p,[s,t]}$.
Next, to bound $\|A_{s',t'}\|_{L_p(\Omega)}$, we write
\begin{equs}
|\E^s f(X_r)- \E^s f(X_r+\varphi_{s'})|&=|\bar\cP_{r-s'}f(X_{s'})-\bar\cP_{r-s'}f(X_{s'}+\varphi_{s'})|
\\
&\leq N |\varphi_{s'}||r-s'|^{-(1-\alpha)/2}.
\end{equs}
So after integration with respect to $r$ and by Jensen's inequality, we get the bound, for any sufficiently small $\eps_4>0$,
\begin{equ}
\|A_{s',t'}\|_{L_p(\Omega)}\leq N|t'-s'|^{1/2+\eps_4}\db{\varphi}_{\scC^0_p,[s,t]}.
\end{equ}
Therefore \eqref{SSL1} is satisfied with $C_1=N\db{\varphi}_{\scC^0_p,[s,t]}$,
and we can conclude the bound \eqref{DKBound} as usual.
\end{proof}

\subsection{Proof of Theorem \ref{thm:main multiplicative}}
First let us recall the following simple fact: if $g$ is a predictable process, then by the Burkholder-Gundy-Davis and H\"older inequalities one has
\begin{equ}
\E\big|\int_s^t g_r\,dB_r\big|^p\leq N\E\int_s^t|g_r|^p\,dr|t-s|^{(p-2)/2}\end{equ}
with $N=N(p)$. This in particular implies
\begin{equ}\label{eq:BDG}
\db{g}_{\scC^{1/2-\eps}_p,[s,t]}\leq N \|g\|_{L_p(\Omega\times[s,t])}.
\end{equ}
whenever $p\geq 1/\eps$.
\begin{proof}
Without the loss of generality we will assume that $p$ is sufficiently large and $\tau$ is sufficiently close to $1/2$.
Let us rewrite the equation for $X^n$ as
\begin{equ}
dX^n_t=b(X^n_{\kappa_n(t)})\,dt+\big[\sigma(X_t)+(\sigma(X^n_t)-\sigma(X_t))+R^n_r\big]\,dB_t,
\end{equ}
where $R^n_t=\sigma(X^n_{\kappa_n(t)})-\sigma(X^n_t)$
is an adapted process such that one has
\begin{equ}
\|R^n_t\|_{L_p(\Omega)}\leq N n^{-1/2}
\end{equ}
for all $t\in[0,1]$. Let us denote
\begin{equs}
-\varphi^n_t&=x_0-x^n_0+\int_0^t b(X_r)\,dr-\int_0^t b(X_{\kappa_n(r)}^n)\,dr,
\\
\cQ^n_t&=\int_0^t\sigma(X^n_r)-\sigma(X_r)\,dB_r,
\\
\cR^n_t&=\int_0^tR^n_r\,dB_r.
\end{equs}
Take some $0\leq S\leq T\leq 1$.
Choose $\eps_1\in(0,\eps/2)$ so that
$(1/2-2\eps_1)\geq 1/2-\eps$.
Then, taking into account \eqref{DKBound X mult}, for any $S\leq s< t\leq T$,  we have
\begin{equs}\label{Step1}
\|\varphi^n_t-\varphi^n_s\|_{L_p(\Omega)}&=\big\|\int_s^t (b(X_r)-b(X^n_{\kappa_n(r)}))\,dr\big\|_{L_p(\Omega)}\\
&\leq \big\|\int_s^t (b(X_r)-b(X^n_r))\,dr\big\|_{L_p(\Omega)}+N|t-s|^{1/2+\eps_1} n^{-1/2+\eps}.
\end{equs}
We wish to apply Lemma \ref{lem:gronwall mult}, with $\varphi=\varphi^n+\cQ^n+\cR^n$.
It is clear that for sufficiently small $\eps_2>0$, $\tau=1/2-\eps_2$ satisfies \eqref{eq:exponents Gronwall mult}.
Therefore,
\begin{equs}
\big\|\int_s^t &(b(X_r)-b(X^n_r))\,dr\big\|_{L_p(\Omega)}
=\big\|\int_s^t (b(X_r)-b(X_r+\varphi_r))\,dr\big\|_{L_p(\Omega)}
\\
&\leq
N|t-s|^{1/2+\eps_4\wedge(1/2+\eps_3)}\big(\db{\varphi^n}_{\scC^\tau_p,[s,t]}
			+\db{\cQ^n}_{\scC^\tau_p,[s,t]}	
			+\db{\cR^n}_{\scC^\tau_p,[s,t]}\big)
\end{equs}
By \eqref{eq:BDG}, for sufficiently large $p$, we have
\begin{equs}
\db{\cQ^n}_{\scC^\tau_p,[s,t]} & \leq N\|X-X^n\|_{L_p(\Omega\times[0,T])},
\\
\db{\cR^n}_{\scC^\tau_p,[s,t]} & \leq Nn^{-1/2}.
\end{equs}
Putting these in the above expression, and using $\tau<1/2$ repeatedly, one gets
\begin{equs}
\big\|\int_s^t &(b(X_{r})-b(X^n_{r}))\,dr\big\|_{L_p(\Omega)}
\\
&\leq N |t-s|^{\tau}|T-S|^{\eps_5}
\big(\db{\varphi^n}_{\scC^\tau_p,[S,T]}+\|X-X^n\|_{L_p(\Omega\times[0,T])}+n^{-1/2}\big)
\end{equs}
with some $\eps_5>0$.
Combining with \eqref{Step1}, dividing by $|t-s|^\tau$ and taking supremum over $s<t\in[S,T]$, we get
\begin{equs}[Step3]
\db{\varphi^n}_{\scC^\tau_p,[S,T]}
&\leq
N\|\varphi^n_S\|_{L_p(\Omega)}+|T-S|^{\eps_5}\db{\varphi^n}_{\scC^\tau_p,[S,T]}
\\
&\qquad
+N\|X-X^n\|_{L_p(\Omega\times[0,T])}+Nn^{-1/2+\eps}.
\end{equs}
Fix an $m\in\N$ (not depending on $n$) such that $Nm^{-\eps_5}\leq 1/2$.
Whenever $|S-T|\leq m^{-1}$, the second term on the right-hand side of \eqref{Step3} can be therefore discarded, and so one in particular gets
\begin{equ}\label{eq:Step6}
\db{\varphi^n}_{\scC^\tau_p,[S,T]}
\leq
N\|\varphi^n_S\|_{L_p(\Omega)}+N\|X-X^n\|_{L_p(\Omega\times[0,T])}+Nn^{-1/2+\eps},
\end{equ}
and thus also
\begin{equ}
\|\varphi^n_{T}\|_{L_p(\Omega)}
\leq
N\|\varphi^n_S\|_{L_p(\Omega)}+N\|X-X^n\|_{L_p(\Omega\times[0,T])}+Nn^{-1/2+\eps}.
\end{equ}
Iterating this inequality at most $m$ times, one therefore gets
\begin{equ}\label{eq:Step7}
\|\varphi^n_T\|_{L_p(\Omega)}
\leq
N\|\varphi^n_0\|_{L_p(\Omega)}+N\|X-X^n\|_{L_p(\Omega\times[0,T])}+Nn^{-1/2+\eps}.
\end{equ}
We can then write, invoking again the usual estimates for the stochastic integrals $\cQ^n$, $\cR^n$
\begin{equs}
\sup_{t\in[0,T]}\big\|X_t-X_t^n\big\|_{L_p(\Omega)}^p
&\leq N\sup_{t\in[0,T]}\big\|\varphi^n_t\big\|_{L_p(\Omega)}^p
\\
&\quad+N\sup_{t\in[0,T]}\big\|\cQ^n_t\big\|_{L_p(\Omega)}^p
+N\sup_{t\in[0,T]}\big\|\cR^n_t\big\|_{L_p(\Omega)}^p
\\
&\leq N\|\varphi^n_0\|_{L_p(\Omega)}^p+N\int_0^T\|X_t-X^n_t\|_{L_p(\Omega)}^p\,dt+Nn^{-p(1/2-\eps)}.
\end{equs}
Gronwall's lemma then yields
\begin{equ}\label{eq:Step8}
\sup_{t\in[0,T]}\big\|X_t-X_t^n\big\|_{L_p(\Omega)}\leq N\|\varphi^n_0\|_{L_p(\Omega)}+Nn^{-1/2+\eps}.
\end{equ}
Putting \eqref{eq:Step6}-\eqref{eq:Step7}-\eqref{eq:Step8} together, we obtain
\begin{equ}
\db{\varphi^n}_{\scC^\tau_p,[0,1]}
\leq
N\|\varphi^n_0\|_{L_p(\Omega)}+Nn^{-1/2+\eps}.
\end{equ}
Therefore, recalling \eqref{eq:BDG} again,
\begin{equs}
\db{X-X^n}_{\scC^\tau_p,[0,1]}&\leq
\db{\varphi^n}_{\scC^\tau_p,[0,1]}
+\db{\cQ^n}_{\scC^\tau_p,[0,1]}
+\db{\cR^n}_{\scC^\tau_p,[0,1]}
\\
&\leq N\|\varphi^n_0\|_{L_p(\Omega)}+Nn^{-1/2+\eps}+\sup_{t\in[0,1]}\big\|X_t-X_t^n\big\|_{L_p(\Omega)}
\\
&\leq N\|\varphi^n_0\|_{L_p(\Omega)}+Nn^{-1/2+\eps},
\end{equs}
as desired.
\end{proof}

\begin{appendices}

\section{Proofs of the auxiliary bounds from Section~\ref{sec:prelim}}
\label{A:AAAAA}

\begin{proof}[Proof of Proposition~\ref{prop:fractional}]
\eqref{prop 1}. Fix $0\leq s\leq t\leq 1$. It follows from the definition of $B^H$ that $B^H_t-B^H_s$ is a Gaussian vector consisting of $d$ independent components, each of them having zero mean and variance
$$
C(t,t)-2C(s,t)+C(s,s)=c_H(t-s)^{2H},
$$
where the function $C$ was defined in \eqref{covfunct}. This implies the statement of the proposition.

\eqref{prop H}. We have
\begin{equation*}
\E^uB_t^{H,i}-\E^sB_t^{H,i}=\int_s^u (t-r)^{H-1/2} dW_r^i.
\end{equation*}
Therefore, $\E^sB_t^{H,i}-\E^uB_t^{H,i}$ is a Gaussian random variable independent of $\F_s$. It is of mean $0$ and variance $c^2(s,t)-c^2(u,t)$. This implies the statement of the lemma.

\eqref{prop 2}. It suffices to notice that the random vector $B^H_t-\E^s B^H_t$ is Gaussian, independent of $\mathcal{F}_s$, consists of $d$ independent components, and each of its components has zero mean and variance
\begin{equ}
\E\big(\int_s^t|t-r|^{H-1/2}\,dW_r\big)^2=c^2(s,t).
\end{equ}

\eqref{prop 3}. One can simply write by the Newton-Leibniz formula
\begin{equ}
c^2(s,t)-c^2(s,u)\leq N \int_u^t |r-s|^{2H-1}\,dr\leq N |t-u||t-s|^{2H-1},
\end{equ}
since by our assumption on $s,u,t$, for all $r\in [u,t]$ one has $r-s\leq t-s\leq 2(r-s)$.

\eqref{prop 5}. It follows from \eqref{eq:Mandelbrot} that
\begin{equation*}
\E^sB^H_t-\E^sB^H_u=\int_{-\infty}^s (|t-r|^{H-1/2}-|u-r|^{H-1/2})\,dW_r.
\end{equation*}
Therefore, by the Burkholder--Davis--Gundy inequality one has
\begin{equs}
\|\E^sB^H_t-\E^sB^H_r\|_{L_p(\Omega)}^2&\le N \int_{-\infty}^s
 \bigl(|t-r|^{H-1/2}-|u-r|^{H-1/2}\bigr)^2\,dr\\
&\leq N\int_{-\infty}^s\Bigl(\int_u^t|v-r|^{H-3/2}\,dv\Bigr)^2\,dr
\\
&\leq N\int_{-\infty}^s|t-u|^2|u-r|^{2H-3}\,dr
\\
&\leq N(t-u)^2 (u-s)^{2H-2}\leq N(t-u)^2 (t-s)^{2H-2},
\end{equs}
where the last inequality follows from the fact that by the assumption $u-s\ge (t-s)/2$.
\end{proof}

\begin{proof}[Proof of Proposition~\ref{prop:HK}]

\eqref{eq:HK bound2}.  \emph{Case  $\beta\le\alpha$}: There is nothing to prove since
$$
\|\cP_tf\|_{\C^\beta(\R^d)}\le \|\cP_tf\|_{\C^\alpha(\R^d)}\le N\|f\|_{\C^\alpha(\R^d)}.
$$

\emph{Case $\beta=0$, $\alpha<0$}:  The bound follows immediately from the definition of the norm. 

\emph{Case $\alpha=0$, $\beta \in (0, 1]$}: 
By differentiating the Gaussian density we have 
\begin{equs}
\| \nabla \cP_t f\|_{\C^0} \leq N t^{-1/2} \| f\|_{\C^0}. 
\end{equs}
Consequently,
\begin{equs}
| \cP_tf(x)-\cP_tf(y)|&  \leq | \cP_tf(x)-\cP_tf(y)|^\beta \| f\|_{\C^0}^{(1-\beta)}
\\
& \leq N t^{-\beta/2} |x-y|^\beta \|f\|_{\C^0},
\end{equs}
which implies that 
$$
[\cP_t f]_{\C^\beta} \leq N t^{-\beta/2} \|f\|_{\C^0}.
$$

This, combined with the trivial estimate $ \| \cP_tf\|_{\C^0} \leq \| f\|_{L_\infty}$ give the desired estimate.

\emph{Case $0< \alpha< \beta <1$}: We refer the reader to \cite[Lemma A.7]{perkowski} where the estimate is proved in the Besov scale. The desired estimate then follows from the equivalence $\mathcal{B}^\gamma_{\infty, \infty} \sim \C^\gamma$ for $\gamma \in (0,1)$. 

\emph{Case $\alpha \in (0,1)$, $\beta =1$}: 
We have

\begin{equs}
\| \nabla \cP_t f \|_{L\infty}& = \sup_{x \in \R^d} \Big| \int_{\R^d} \nabla p_t(x-y)f(y) \, dy \Big|
\\
& = \sup_{x \in \R^d} \Big| \int_{\R^d} \nabla p_t(x-y)\big(f(y)-f(x) \big) \, dy \Big|
\\
& \leq N [f]_{\C^\alpha} \int_{\R^d} | \nabla p_t(y)| |y|^\alpha \, dy  
\\
& \leq  N  [f]_{\C^\alpha}  t^{(\alpha-1)/2},
\end{equs}
which again combined with $\| \cP_tf\|_{\C^0} \leq \|f \|_{\C^0}$ proves the claim. 

\emph{Case $\alpha< 0$, $\beta \in [0,1]$}: 

\begin{equs}
\| \cP_t f\|_{\C^\beta} = \| \cP_{\frac{t}{2}+\frac{t}{2}}  f\|_{\C^\beta}  \leq N t^{-\beta/2} \|\cP_{t/2}f\|_{\C^0} & \leq N  t^{(\alpha-\beta)/2} \sup_{\eps \in (0,1]} \eps^{-\alpha/2}\|\cP_\eps f\|_{\C^0}
\\
&= N  t^{(\alpha-\beta)/2} \|f\|_{\C^\alpha}.
\end{equs}

\eqref{eq:HK bound}. Fix $\delta\in(0,1]$ such that $\delta\ge{\frac\alpha2-\frac\beta2}$. Then we have
\begin{align*}
\|\cP_tf-\cP_sf\|_{\C^{\beta}(\R^d)}&\le \int_s^t
\Bigl\|\frac{\partial}{\partial r}\cP_rf\Bigr\|_{\C^{\beta}(\R^d)}\,dr\\
&=\int_s^t
\Bigl\|\cP_r \Delta f\Bigr\|_{\C^{\beta}(\R^d)}\,dr\\
&\le N\int_s^t r^{\frac{\alpha-\beta-2}{2}}
\Bigl\|\Delta f\Bigr\|_{\C^{\alpha-2}(\R^d)}\,dr\\
&\le N\| f\|_{\C^{\alpha}(\R^d)}\int_s^t r^{\frac\alpha2-\frac\beta2-\delta}r^{-1+\delta}\,dr\\
&\le N\| f\|_{\C^{\alpha}(\R^d)}s^{\frac\alpha2-\frac\beta2-\delta}(t-s)^{\delta},
\end{align*}
where the last inequality follows from the facts that $r\ge s$ and $r\ge r-s$, and that both of the exponents in the penultimate inequality are nonpositive thanks to the conditions on $\delta$.
This yields the statement of \eqref{eq:HK bound}.

\eqref{eq:diffnewbound}.
First let us deal with the case $H\le 1/2$. Then the bound follows easily by applying part \eqref{eq:HK bound} of the proposition with $\delta=1/2$.
Indeed, for any $0\le s\le u \le t$  we have
\begin{align*}
&\|\cP_{c^2(s,t)}f-\cP_{c^2(u,t)}f\|_{\C^\beta}\le N \|f\|_{\C^\alpha} c^{\alpha-\beta-1}(u,t)
(c^2(s,t)-c^2(u,t))^{\frac12}\\
&\le N\|f\|_{\C^\alpha}(t-u)^{H(\alpha-\beta-1)}(u-s)^{\frac12} (t-u)^{H-\frac12}\\
&=N\|f\|_{\C^\alpha}(u-s)^{\frac12} (t-u)^{H(\alpha-\beta)-\frac12},
\end{align*}
where we also used the fact that
\begin{equation}\label{onlyifHislessthanhalf}
c^2(s,t)-c^2(u,t)\le N (u-s)(t-u)^{2H-1}.
\end{equation}
This establishes the desired bound.

Now let us consider the case $H>1/2$ (in this case $2H-1>0$ and thus bound
\eqref{onlyifHislessthanhalf} does not hold).  Put for $0\le s\le u \le t$
\begin{equation}\label{defk}
k(s,u,t):=c^2(u,t)+(u-s)\partial_tc^2(u,t)=(2H)^{-1}(t-u)^{2H}+(u-s)(t-u)^{2H-1}.
\end{equation}
Note that by convexity of the function $z\mapsto z^{2H}$  one has for any $0\le z_1\le z_2$
\begin{equation*}
z_1^{2H}+2H(z_2-z_1)z_1^{2H-1}\le z_2^{2H} \le z_1^{2H}+2H(z_2-z_1)z_1^{2H-1}+(z_2-z_1)^{2H}.
\end{equation*}
Hence for $0\le s\le u \le t$ we have
\begin{equation}\label{recall}
c^2(u,t)\le k(s,u,t)\le c^2(s,t) \le k(s,u,t)+c^2(s,u)
\end{equation}
Now we are ready to obtain the desired bound. We have
\begin{align}\label{Hgehalf0}
\|\cP_{c^2(s,t)}f-\cP_{c^2(u,t)}f\|_{\C^\beta}&\le \|\cP_{c^2(s,t)}f-\cP_{k(s,u,t)}f\|_{\C^\beta} +\|\cP_{k(s,u,t)}f-\cP_{c^2(u,t)}f\|_{\C^\beta}\nn\\
&\le I_1+I_2.
\end{align}
We bound $I_1$ and $I_2$ using  part \eqref{eq:HK bound} of the proposition but with different $\delta$. First, we apply  part \eqref{eq:HK bound} with $\delta=\frac1{4H}\vee(\alpha/2-\beta/2)$. Recalling \eqref{recall}, we deduce
\begin{equation}\label{Hgehalf1}
I_1\le N\|f\|_{C^\alpha}k(s,u,t)^{\frac\alpha2-\frac\beta2-\delta}c^{2\delta}(s,u)
\le  N\|f\|_{C^\alpha}(u-s)^{\frac12}(t-u)^{(H(\alpha-\beta)-\frac12)\wedge0}.
\end{equation}
Applying now part \eqref{eq:HK bound} with $\delta=1/2$, we obtain
\begin{equation*}
I_2\le N \|f\|_{C^\alpha}c^{\alpha-\beta-1}(u,t)(u-s)^{\frac12}(t-u)^{H-\frac12}\le N \|f\|_{C^\alpha} (u-s)^{\frac12} (t-u)^{H(\alpha-\beta)-\frac12}.
\end{equation*}
This, combined with \eqref{Hgehalf0} and \eqref{Hgehalf1} implies the desired bound for the case $H>1/2$.

\eqref{randomresult}. We begin with the case $H\le 1/2$. Then, applying part \eqref{eq:HK bound2} of the theorem with $\beta=1$, we deduce for $0\le s \le u
\le t\le 1$
\begin{equation*}
|\cP_{c^2(u,t)}f(x)-\cP_{c^2(u,t)}f(x+\xi)|\le N\|f\|_{C^\alpha}(t-u)^{H(\alpha-1)}|\xi|.
\end{equation*}
Hence for any $p\ge 2$ we have
\begin{align*}
\|\cP_{c^2(u,t)}f(x)-\cP_{c^2(u,t)}f(x+\xi)\|_{L_p(\Omega)}&\le N\|f\|_{C^\alpha}(t-u)^{H(\alpha-1)}\|\xi\|_{L_p(\Omega)}\\
&\le N\|f\|_{C^\alpha}(u-s)^{\frac12}(t-u)^{H\alpha-\frac12},
\end{align*}
where the last inequality follows from the bound \eqref{onlyifHislessthanhalf} and the definition of the random variable $\xi$. This completes the proof for the case $H\le1/2$.

Now let us deal with the case $H\in(1/2,1)$. Fix $0\le s \le u
\le t\le 1$.
Let $\eta$ and $\rho$ be independent Gaussian random vectors consisting of $d$ independent identically distributed  components each. Suppose that for any $i=1,\hdots,d$ we have $\E \eta^i=\E \rho^i=0$ and
$$
\Var (\eta^i)=(u-s)(t-u)^{2H-1};\quad\Var (\rho^i)=v(s,u,t)-(u-s)(t-u)^{2H-1}.
$$
It is clear that
\begin{align}\label{appenixcombo1}
&\|\cP_{c^2(u,t)}f(x)-\cP_{c^2(u,t)}f(x+\xi)\|_{L_p(\Omega)}\nn\\
&\quad= \|\cP_{c^2(u,t)}f(x)-\cP_{c^2(u,t)}f(x+\eta+\rho)\|_{L_p(\Omega)}\nn\\
&\quad\le  \|\cP_{c^2(u,t)}f(x)-\cP_{c^2(u,t)}f(x+\eta)\|_{L_p(\Omega)}+
\|\cP_{c^2(u,t)}f(x+\eta)-\cP_{c^2(u,t)}f(x+\eta+\rho)\|_{L_p(\Omega)}\nn\\
&\quad=:I_1+I_2.
\end{align}
Applying part \eqref{eq:HK bound2} of the theorem with $\beta=1$, we get
\begin{equation}\label{appenixcombo2}
I_1
\le N\|f\|_{\C^\alpha}c^{\alpha-1}(u,t)\|\eta\|_{L_p(\Omega)}\le
N\|f\|_{\C^\alpha}(u-s)^{\frac12}(t-u)^{\alpha H-\frac12}.
\end{equation}
Similarly, using part \eqref{eq:HK bound2} of the theorem with $\beta=\frac1{2H}\vee\alpha$ and recalling \eqref{recall}, we deduce
\begin{equation*}
I_2
\le N\|f\|_{\C^\alpha}c^{(\alpha-\frac1{2H})\wedge0}(u,t) \,\|\,|\rho|^{\frac1{2H}\vee\alpha}\,\|_{L_p(\Omega)}\le
N\|f\|_{\C^\alpha}(u-s)^{\frac12}(t-u)^{(\alpha H-\frac12)\wedge0}.
\end{equation*}
Combined with \eqref{appenixcombo1} and \eqref{appenixcombo2}, this yields the required bound.
\end{proof}

\begin{proof}[Proof of Proposition \ref{P:Holdernorms}] Obviously it suffices to show it for  $k=1$. 

\emph{1. Case $\alpha-\delta =0$}:
The statement  follows directly by definition of the $\C^\alpha$ norm.

\emph{2. Case $\alpha-\delta \in (0,1]$}:
 First, let us consider $\alpha \in (0,1]$.  For all $\beta \in [0,1]$ we have 
\begin{equs}
|f(y+x)-f(y)-f(z+x)-f(z)| & \leq   (2|x|^\alpha [f]_{\C^\alpha})^ \beta  (2|y-z|^{\alpha} [f]_{\C^\alpha})^{(1-\beta)}
\end{equs}
which upon dividing by $|y-z|^{\alpha-\delta}$, choosing $\beta= \delta/ \alpha$, and taking suprema over $y \neq z$ gives 
\begin{equation*}
[f(\cdot+x)-f(\cdot)]_{\C^{\alpha-\delta}} \leq 4 |x|^\delta [f]_{\C^\alpha}.
\end{equation*}
Similarly, we have 
\begin{equs}
\|f(\cdot+x)-f(\cdot)\|_{\C^0} \leq |x|^\delta [f]_{\C^\alpha}^{\delta/\alpha} (2\|f\|_{\C^0})^{1-\delta/\alpha} \leq 2 |x|^\delta \| f\|_{\C^\alpha},
\end{equs}
which combined with the  inequality above gives 
\begin{equs}
\|f(\cdot+x)-f(\cdot)\|_{\C^{\alpha-\delta}} \leq 6 |x|^\delta \|f\|_{\C^\alpha}.
\end{equs}
Now let us consider the case  $\alpha \in (1,2]$. By the fundamental theorem of calculus we have for any $\beta \in [0,1]$ 
\begin{equs}
& \frac{|f(y+x)-f(y)-f(z+x)-f(z)|}{|y-z|^{\alpha-\delta}} 
\\
= &\frac{1}{|y-z|^{\alpha-\delta}} \Big|\int_0^1 x_i\big( \d_{x_i}f(y+\theta x)-\d_{x_i}f(z+\theta x)\big) \, d \theta \Big |^\beta
\\ 
& \qquad \qquad \times \Big|\int_0^1 (y_i-z_i)\big(\d_{x_i}f(z+x + \theta (y-z))- \d_{x_i}f(z + \theta (y-z)) \big) \, d \theta \Big|^{(1-\beta)} 
\\
\leq & N \frac{( | x| [\nabla f]_{\C^{\alpha-1}}|y-z|^{\alpha-1} )^\beta (|y-z| [ \nabla f ]_{C^{\alpha-1}} |x|^{\alpha-1})^{1-\beta} }{|y-z|^{\alpha-\delta}}
\\
\leq & N  |x|^{\beta+(\alpha-1)(1-\beta)}\|f\|_{\C^\alpha} |y-z|^{(\alpha-1)\beta+1-\beta-\alpha+\delta},
\end{equs}
which upon choosing $\beta=(\delta+1-\alpha)/2\alpha$ and taking suprema over $ y \neq z$ gives
\begin{equation*}
[f(x+\cdot)-f(\cdot)]_{\C^{\alpha-\delta}} \leq N |x|^\delta \| f\|_{\C^\alpha}. 
\end{equation*}
In addition, we have 
\begin{equs}
\|f(\cdot+x)-f(\cdot)\|_{\C^0} \leq |x|^\delta [f]_{\C^\delta} \leq N |x|^\delta \| f\|_{C^\alpha},
\end{equs}
which combined with the above proves the claim. 

\emph{3. Case  $\alpha-\delta \in (k, k+1]$ for $k \in \mathbb{N}$}: The statement follows by proceeding as above,   considering also derivatives of $f$ up to sufficiently high order.

\emph{4. Case  $\alpha- \delta<0$}:  We first consider the case $\alpha \in [0,1)$, for which we have by virtue of Proposition \ref{prop:HK} \eqref{eq:HK bound2}
\begin{equs}
\| f(x+\cdot)- f(\cdot)\|_{\C^{\alpha-\delta}}&  = \sup_{\eps \in (0,1]} \eps^{\frac{\delta-\alpha}{2}} \| \cP_\eps f(x+ \cdot)- \cP_\eps f(\cdot)\|_{\C^0} 
\\
& 
\leq \sup_{\eps \in (0,1]} \eps^{\frac{\delta-\alpha}{2}} |x|^ \delta \| \cP_\eps f \|_{\C^\delta}
\\
& 
 \leq N \sup_{\eps \in (0,1]} \eps^{\frac{\delta-\alpha}{2} }|x|^\delta \eps^{\frac{\alpha-\delta}{2}} \| f\|_{\C^\alpha}= N |x|^\delta \| f\|_{\C^\alpha}. 
\end{equs}
We move to the case $\alpha<0$. We have 
\begin{equs}
\| f(x+\cdot)- f(\cdot)\|_{\C^{\alpha-\delta}}&  = \sup_{\eps \in (0,1]} \eps^{\frac{\delta-\alpha}{2}} \| \cP_\eps f(x+ \cdot)- \cP_\eps f(\cdot)\|_{\C^0} 
\\
& 
\leq \sup_{\eps \in (0,1]} \eps^{\frac{\delta-\alpha}{2}} |x|^ \delta \| \cP_\eps f \|_{\C^\delta}
\\
& =  \sup_{\eps \in (0,1]} \eps^{\frac{\delta-\alpha}{2}} |x|^ \delta \| \cP_{\frac{\eps}{2}+\frac{\eps}{2}} f \|_{\C^\delta}
\\
& 
 \leq N \sup_{\eps \in (0,1]} \eps^{\frac{\delta-\alpha}{2} }|x|^\delta \eps^{\frac{-\delta}{2}} \| \cP_{\frac{\eps}{2}}f\|_{\C^0} \leq  N |x|^\delta \| f\|_{\C^\alpha}. 
\end{equs}
The proposition is proved. 
\end{proof}

\section{Proofs of the results from Section~\ref{S:Girsan} related to the Girsanov theorem}
\label{B:BBBBB}

\begin{proof}[Proof of Proposition~\ref{Pr:Girsanov}]
If $H=1/2$, then there is nothing to prove; the statement of the proposition follows from the standrad Girsanov theorem for Brownian motion.
Otherwise, if $H\neq1/2$, let us verify that all the conditions of the Girsanov theorem in the form of \cite[Theorem~2]{Nualart} are satisfied. Note that even though this theorem is stated in  \cite{Nualart} in the one--dimensional setting, its extension to the multidimensional setup is immediate.

First, let us check condition (i) of \cite[Theorem~2]{Nualart}. If $H<1/2$, then $\int_0^1 u_s^2 ds\le M^2<\infty$  and thus this condition is satisfied by the statement given at \cite[last paragraph of Section 3.1]{Nualart}. If $H>1/2$, then
\begin{equation*}
\bigl[D_{0+}^{H-1/2}u\bigr](t)=Nu_tt^{-H+1/2}+N(H-1/2)\int_0^t\frac{u_t-u_s}{(t-s)^{H+1/2}}\,ds,
\end{equation*}
where $D_{0+}^{\beta}$ denotes the left-sided Riemann–Liouville derivative of of order $\beta$ at $0$, $\beta\in(0,1)$, see \cite[formula~$(4)$]{Nualart}. Therefore, taking into account that $H<1$ and assumption \ref{intassump},
\begin{equation*}
\int_0^1\Bigl|\bigl[D_{0+}^{H-1/2}u\bigr](t)\Bigr|^2\,dt\le NM^2+ N\int_0^1\Bigl(\int_0^t\frac{|u_t-u_s|}{(t-s)^{H+1/2}}\,ds\Bigr)^2\,dt<\infty\,\,\text{a.s.}.
\end{equation*}
Thus,  $D_{0+}^{H-1/2}u\in L_2([0,1])$ a.s. and hence condition (i) of \cite[Theorem~2]{Nualart} is satisfied.

Now let us verify condition (ii) of \cite[Theorem~2]{Nualart}. Consider the following kernel:
\begin{equation*}
K_H(t,s):=(t-s)^{H-1/2}F(H-1/2,1/2-H,H+1/2,1-t/s),\quad 0\le s\le t\le1,
\end{equation*}
where $F$ is the Gauss hypergeometric function, see \cite[equation $(2)$]{Ustunel}.
It follows from  \cite[Corollary~3.1]{Ustunel}, that there exists a constant $k_H>0$ and $d$--dimensional Brownian motion $\wt W$ such that
\begin{equation*}
B^H(t)=k_H\int_0^t K_H(t,s)\, d\wt W_s,\quad 0\le t\le 1.
\end{equation*}
Consider a random variable
\begin{equation*}
\rho:=\exp\Bigl(-\int_0^1  v_s d\wt W_s-\frac12\int_0^1 |v_s|^2 ds\Bigr),
\end{equation*}
where the vector $v$ is defined in the following way. If $H<1/2$, then
\begin{equation}\label{Hlesshalf}
v_t:=\frac{\sin(\pi(H+1/2))}{\pi k_H}t^{H-1/2}\int_0^t (t-s)^{-H-1/2}s^{1/2-H}u_s\, ds,
\end{equation}
and if $H>1/2$, then
\begin{equation}\label{Hbiggerhalf}
v_t:=\frac{\sin(\pi(H-1/2))}{\pi k_H (H-1/2)}\Bigl(t^{1/2-H}u_t+(H-1/2)\int_0^t \frac{u_t-t^{H-1/2}s^{1/2-H}u_s}{{(t-s)}^{H+1/2}}\, ds \Bigr).
\end{equation}
Taking into account \cite[formulas $(11)$ and $(13)$]{Nualart}, we see that condition (ii) of \cite[Theorem~2]{Nualart} is equivalent to the following one: $\E \rho=1$. We claim that actually
\begin{equation}\label{tocheck}
\E \exp(\lambda \int_0^1|v_t|^2 dt)\le R(\lambda)<\infty
\end{equation}
where
\begin{align*}
&R(\lambda):=\exp(\lambda N(H)M^2)\quad\text{if $H<1/2$};\\
&R(\lambda):=\exp(\lambda N(H)M^2)\E\exp(\lambda N(H)\xi)\quad\text{if $H\in(1/2,1)$}.
\end{align*}
By the Novikov theorem this, of course, implies that $\E \rho=1$.

Now let us verify \eqref{tocheck}. If $H<1/2$, then it follows from \eqref{Hlesshalf} and \eqref{eq:integral} that
\begin{equation*}
|v_t|\le N(H)Mt^{-H+1/2},
\end{equation*}
which immediately yields \eqref{tocheck}.

If $H>1/2$, then we make use of
\eqref{Hbiggerhalf} and \eqref{eq:integral2} to deduce
\begin{align*}
|v_t|\le& N(H)Mt^{1/2-H}+N(H)\int_0^t\frac{|u_t| (t^{H-1/2}s^{1/2-H}-1)}{{(t-s)}^{H+1/2}}\, ds\\
&+N(H)\int_0^t\frac{|u_t-u_s|t^{H-1/2}s^{1/2-H}}{{(t-s)}^{H+1/2}}\, ds\\
\le& N(H)Mt^{1/2-H}+N(H)\int_0^t\frac{|u_t-u_s|t^{H-1/2}s^{1/2-H}}{{(t-s)}^{H+1/2}}\, ds.
\end{align*}
Taking into account assumption \eqref{intassump}, we obtain \eqref{tocheck}. Thus, by above, condition (ii) of \cite[Theorem~2]{Nualart} is satisfied.

Therefore all the conditions of \cite[Theorem~2]{Nualart} are satisfied. Hence the process $\wt B^H$ is indeed a fractional Brownian motion with Hurst parameter $H$ under $\wt\P$ defined by $d\wt\P/d\P=\rho$.

Finally, to show \eqref{densitybound}, we fix $\lambda>0$. Then, applying the Cauchy--Schwarz inequality, we get
\begin{align*}
\E \rho^\lambda=&\E \exp\Bigl(-\lambda\int_0^1  v_s d\wt W_s-\frac\lambda2\int_0^1 |v_s|^2 ds\Bigr)\\
=&\E \exp\Bigl(-\lambda\int_0^1  v_s d\wt W_s-\lambda^2\int_0^1 |v_s|^2 ds+(\lambda^2-\lambda/2)\int_0^1 |v_s|^2 ds\Bigr)\\
\le& \Bigl[\E \exp\Bigl(-2\lambda\int_0^1  v_s d\wt W_s-2\lambda^2\int_0^1 |v_s|^2 ds\Bigr)\Bigr]^{1/2}
\Bigl[\E \exp\Bigl((2\lambda^2-\lambda)\int_0^1  |v_s|^2 ds\Bigr)\Bigr]^{1/2}\\
=&\Bigl[\E \exp\Bigl((2\lambda^2-\lambda)\int_0^1  |v_s|^2 ds\Bigr)\Bigr]^{1/2}\\
\le& R(2\lambda^2)^{1/2} <\infty,
\end{align*}
where the last inequality follows from \eqref{tocheck}. This completes the proof of the proposition.
\end{proof}

\begin{proof}[Proof of Lemma~\ref{L:annoyingintegral}]
We begin with establishing bound \eqref{annoyingintegralbound}.
Fix $n\in\N$ and let us split the inner integral in
\eqref{annoyingintegralbound} into two parts: the integral over $[0,\kappa_n(t)-(2n)^{-1}]$ and $[\kappa_n(t)-(2n)^{-1},t]$. For the first part we have
\begin{align}\label{annoyingintegral1}
I_1(t)&:=\int_0^{\kappa_n(t)-(2n)^{-1}}\frac{(t/s)^{H-1/2} |f_{\kappa_n(t)}-f_{\kappa_n(s)}|} {(t-s)^{H+1/2}}\,ds\nn\\
&\le [f]_{\C^\rho}t^{H-1/2}
\int_0^{\kappa_n(t)-(2n)^{-1}} s^{1/2-H} |\kappa_n(t)-\kappa_n(s)|^\rho (t-s)^{-H-1/2}\,ds\nn\\
&\le N[f]_{\C^\rho}t^{H-1/2}
\int_0^{\kappa_n(t)-(2n)^{-1}} s^{1/2-H} |t-s|^{\rho-H-1/2}\,ds\nn\\
&\le N[f]_{\C^\rho}t^{H-1/2}
\int_0^{t} s^{1/2-H} |t-s|^{\rho-H-1/2}\,ds\nn\\
&\le N[f]_{\C^\rho}t^{\rho-H+1/2},
\end{align}
where we used bound \eqref{eq:integral}, the assumption $\rho-H-1/2>-1$, and the fact that for $s\in [0,\kappa_n(t)-(2n)^{-1}]$ one has
$$
\kappa_n(t)-\kappa_n(s)\leq t-s+1/n\le  3(t-s).
$$

Now let us move on and estimate the second part of the inner integral in \eqref{annoyingintegralbound}. If $t\ge1/n$, then we have
\begin{align}\label{annoyingintegral2}
I_2(t)&:=\int_{\kappa_n(t)-(2n)^{-1}}^t\frac{(t/s)^{H-1/2} |f_{\kappa_n(t)}-f_{\kappa_n(s)}|} {(t-s)^{H+1/2}}\,ds\nn\\
&= t^{H-1/2}|f_{\kappa_n(t)}-f_{\kappa_n(t)-1/n}|
\int_{\kappa_n(t)-(2n)^{-1}}^{\kappa_n(t)} s^{1/2-H} (t-s)^{-H-1/2}\,ds\nn\\
&\le N [f]_{\C^\rho}n^{-\rho}\frac{t^{H-1/2}}{(\kappa_n(t)-(2n)^{-1})^{H-1/2}}
\int_{\kappa_n(t)-(2n)^{-1}}^{\kappa_n(t)}  (t-s)^{-H-1/2}\,ds\nn\\
&\le  N [f]_{\C^\rho}n^{-\rho}(t-\kappa_n(t))^{-H+1/2},
\end{align}
where in the last inequality we used that for $t\ge1/n$ one has
$$
t\le \kappa_n(t)+\frac1n\le 4\kappa_n(t)-\frac2n=4\Bigl(\kappa_n(t)-\frac1{2n}\Bigr).
$$

Now, using \eqref{annoyingintegral2} and \eqref{annoyingintegral2}, we can
bound the left--hand side of \eqref{annoyingintegralbound}. We deduce
\begin{align*}
&\int_0^1\Bigl(\int_0^t\frac{(t/s)^{H-1/2} |f_{\kappa_n(t)}-f_{\kappa_n(s)}|} {(t-s)^{H+1/2}}\,ds\Bigr)^2\,dt\\
&\qquad\le N\int_0^1 I_1(t)^2\,dt+
N\int_0^1 I_2(t)^2\,dt\\
&\qquad\le N[f]_{\C^\rho}^2+N[f]_{\C^\rho}^2n^{-2\rho}
\sum_{i=1}^{n-1}
\int_{\frac{i}{n}}^{\frac{i+1}n}|t-\kappa_n(t)|^{1-2H}\,dt\\
&\qquad\le N[f]_{\C^\rho}^2+N[f]_{\C^\rho}^2n^{-2\rho}
\sum_{i=1}^{n-1} n^{-(2-2H)}\\
&\qquad\le N[f]_{\C^\rho}^2+N[f]_{\C^\rho}^2n^{2H-1-2\rho}\le N[f]_{\C^\rho}^2,
\end{align*}
where the very last inequality follows from the assumption $\rho>H-1/2$. This establishes \eqref{annoyingintegralbound}.

Not let us prove \eqref{notannoyingintegralbound}. Using the assumption $\rho>H-1/2$ and identity \eqref{eq:integral}, we deduce
\begin{align*}
\int_0^1\Bigl(\int_0^t\frac{(t/s)^{H-1/2} |f_{t}-f_{s}|} {(t-s)^{H+1/2}}\,ds\Bigr)^2\,dt&\le[f]_{\C^\rho}^2
\int_0^1\Bigl(\int_0^t s^{-H+1/2} (t-s)^{\rho-H-1/2}\,ds\Bigr)^2\,dt\\
&\le  N[f]_{\C^\rho}^2.
\end{align*}
This proves \eqref{notannoyingintegralbound}.
\end{proof}

\end{appendices}

\bibliographystyle{Martin}
\bibliography{SSLEuler1}

\begin{thebibliography}{KHMN14}
\expandafter\ifx\csname url\endcsname\relax
  \def\url#1{\texttt{#1}}\fi
\expandafter\ifx\csname urlprefix\endcsname\relax\def\urlprefix{URL }\fi
\expandafter\ifx\csname href\endcsname\relax
  \def\href#1#2{#2}\fi
\expandafter\ifx\csname burlalt\endcsname\relax
  \def\burlalt#1#2{\href{#2}{\texttt{#1}}}\fi

\bibitem[{Alt}17]{Altmeyer}
\textsc{R.~{Altmeyer}}.
\newblock {Estimating occupation time functionals}.
\newblock \emph{arXiv e-prints}  arXiv:1706.03418.
\newblock \burlalt{arXiv:1706.03418}{http://arxiv.org/abs/1706.03418}.

\bibitem[BHY18]{Bao2}
\textsc{J.~Bao}, \textsc{X.~Huang}, and \textsc{C.~Yuan}.
\newblock {Convergence Rate of Euler--Maruyama Scheme for SDEs with
  H{\"o}lder--Dini Continuous Drifts}.
\newblock \emph{Journal of Theoretical Probability} (2018).
\newblock
  \burlalt{doi:10.1007/s10959-018-0854-9}{http://dx.doi.org/10.1007/s10959-018-0854-9}.

\bibitem[BM19]{BM}
\textsc{O.~Butkovsky} and \textsc{L.~Mytnik}.
\newblock Regularization by noise and flows of solutions for a stochastic heat
  equation.
\newblock \emph{Ann. Probab.} \textbf{47}, no.~1, (2019), 165--212.
\newblock
  \burlalt{doi:10.1214/18-AOP1259}{http://dx.doi.org/10.1214/18-AOP1259}.

\bibitem[BNP15]{BNP}
\textsc{D.~{Ba{\~n}os}}, \textsc{T.~{Nilssen}}, and \textsc{F.~{Proske}}.
\newblock {Strong existence and higher order Fr{\'e}chet differentiability of
  stochastic flows of fractional Brownian motion driven SDE's with singular
  drift}.
\newblock \emph{arXiv e-prints}  arXiv:1511.02717.
\newblock \burlalt{arXiv:1511.02717}{http://arxiv.org/abs/1511.02717}.

\bibitem[BT96a]{Bally-Talay}
\textsc{V.~Bally} and \textsc{D.~Talay}.
\newblock The law of the euler scheme for stochastic differential equations.
\newblock \emph{Probability Theory and Related Fields} \textbf{104}, no.~1,
  (1996), 43--60.
\newblock
  \burlalt{doi:10.1007/BF01303802}{http://dx.doi.org/10.1007/BF01303802}.

\bibitem[BT96b]{Bally-Talay2}
\textsc{V.~BALLY} and \textsc{D.~TALAY}.
\newblock The law of the euler scheme for stochastic differential equations:
  Ii. convergence rate of the density.
\newblock \emph{Monte Carlo Methods and Applications} \textbf{2}, no.~2,
  (1996), 93--128.
\newblock
  \burlalt{doi:doi:10.1515/mcma.1996.2.2.93}{http://dx.doi.org/doi:10.1515/mcma.1996.2.2.93}.

\bibitem[BY82]{Barlya}
\textsc{M.~T. Barlow} and \textsc{M.~Yor}.
\newblock Semimartingale inequalities via the {G}arsia-{R}odemich-{R}umsey
  lemma, and applications to local times.
\newblock \emph{J. Functional Analysis} \textbf{49}, no.~2, (1982), 198--229.
\newblock
  \burlalt{doi:10.1016/0022-1236(82)90080-5}{http://dx.doi.org/10.1016/0022-1236(82)90080-5}.

\bibitem[CG16]{Cat-Gub}
\textsc{R.~Catellier} and \textsc{M.~Gubinelli}.
\newblock Averaging along irregular curves and regularisation of odes.
\newblock \emph{Stochastic Processes and their Applications} \textbf{126},
  no.~8, (2016), 2323 -- 2366.
\newblock
  \burlalt{doi:https://doi.org/10.1016/j.spa.2016.02.002}{http://dx.doi.org/https://doi.org/10.1016/j.spa.2016.02.002}.

\bibitem[Dav07]{Davie}
\textsc{A.~M. Davie}.
\newblock Uniqueness of solutions of stochastic differential equations.
\newblock \emph{Int. Math. Res. Not. IMRN} , no.~24, (2007), Art. ID rnm124,
  26.
\newblock
  \burlalt{doi:10.1093/imrn/rnm124}{http://dx.doi.org/10.1093/imrn/rnm124}.

\bibitem[DG18]{DK}
\textsc{K.~{Dareiotis}} and \textsc{M.~Gerencs{\'e}r}.
\newblock {On the regularisation of the noise for the Euler-Maruyama scheme
  with irregular drift}.
\newblock \emph{arXiv e-prints}  arXiv:1812.04583.
\newblock \burlalt{arXiv:1812.04583}{http://arxiv.org/abs/1812.04583}.

\bibitem[DGI19]{Issoglo}
\textsc{T.~{De Angelis}}, \textsc{M.~{Germain}}, and \textsc{E.~{Issoglio}}.
\newblock {A numerical scheme for stochastic differential equations with
  distributional drift}.
\newblock \emph{arXiv e-prints} (2019).
\newblock \burlalt{arXiv:1906.11026}{http://arxiv.org/abs/1906.11026}.

\bibitem[DU99]{Ustunel}
\textsc{L.~Decreusefond} and \textsc{A.~S. \"{U}st\"{u}nel}.
\newblock Stochastic analysis of the fractional {B}rownian motion.
\newblock \emph{Potential Anal.} \textbf{10}, no.~2, (1999), 177--214.
\newblock
  \burlalt{doi:10.1023/A:1008634027843}{http://dx.doi.org/10.1023/A:1008634027843}.

\bibitem[Fau92]{Faure}
\textsc{O.~Faure}.
\newblock \emph{{Simulation du Mouvement Brownien et des Diffusions}}.
\newblock Ph.D. thesis, Ecole National des Ponts et Chausses, 1992.

\bibitem[FdLP06]{FeyelPradelle}
\textsc{D.~Feyel} and \textsc{A.~de~La~Pradelle}.
\newblock Curvilinear integrals along enriched paths.
\newblock \emph{Electron. J. Probab.} \textbf{11}, (2006), no. 34, 860--892.
\newblock
  \burlalt{doi:10.1214/EJP.v11-356}{http://dx.doi.org/10.1214/EJP.v11-356}.

\bibitem[FR14]{Friz-Riedel}
\textsc{P.~Friz} and \textsc{S.~Riedel}.
\newblock Convergence rates for the full gaussian rough paths.
\newblock \emph{Ann. Inst. H. Poincaré Probab. Statist.} \textbf{50}, no.~1,
  (2014), 154--194.
\newblock
  \burlalt{doi:10.1214/12-AIHP507}{http://dx.doi.org/10.1214/12-AIHP507}.

\bibitem[Fri83]{Friedman}
\textsc{A.~Friedman}.
\newblock \emph{Partial differential equations of parabolic type}.
\newblock R.E. Krieger Pub. Co., 1983.

\bibitem[GIP15]{perkowski}
\textsc{M.~Gubinelli}, \textsc{P.~Imkeller}, and \textsc{N.~Perkowski}.
\newblock Paracontrolled distributions and singular {PDE}s.
\newblock \emph{Forum of Mathematics, Pi} \textbf{3}, (2015), e6.
\newblock
  \burlalt{doi:10.1017/fmp.2015.2}{http://dx.doi.org/10.1017/fmp.2015.2}.

\bibitem[GK96]{GyK}
\textsc{I.~Gy{\"o}ngy} and \textsc{N.~Krylov}.
\newblock {Existence of strong solutions for It{\^o}'s stochastic equations via
  approximations}.
\newblock \emph{Probability Theory and Related Fields} \textbf{105}, no.~2,
  (1996), 143--158.
\newblock
  \burlalt{doi:10.1007/BF01203833}{http://dx.doi.org/10.1007/BF01203833}.

\bibitem[Gub04]{Gubin04}
\textsc{M.~Gubinelli}.
\newblock Controlling rough paths.
\newblock \emph{J. Funct. Anal.} \textbf{216}, no.~1, (2004), 86--140.
\newblock
  \burlalt{doi:10.1016/j.jfa.2004.01.002}{http://dx.doi.org/10.1016/j.jfa.2004.01.002}.

\bibitem[Gy{\"o}98]{Gy}
\textsc{I.~Gy{\"o}ngy}.
\newblock {A Note on Euler's Approximations}.
\newblock \emph{Potential Analysis} \textbf{8}, no.~3, (1998), 205--216.
\newblock
  \burlalt{doi:10.1023/A:1016557804966}{http://dx.doi.org/10.1023/A:1016557804966}.

\bibitem[Hai16]{Hairer_notes}
\textsc{M.~Hairer}.
\newblock Advanced stochastic analysis, 2016.
\newblock \urlprefix\url{http://hairer.org/notes/Malliavin.pdf}.
\newblock Lecture notes.

\bibitem[HLN16]{Nualart-Approx}
\textsc{Y.~Hu}, \textsc{Y.~Liu}, and \textsc{D.~Nualart}.
\newblock Rate of convergence and asymptotic error distribution of euler
  approximation schemes for fractional diffusions.
\newblock \emph{Ann. Appl. Probab.} \textbf{26}, no.~2, (2016), 1147--1207.
\newblock
  \burlalt{doi:10.1214/15-AAP1114}{http://dx.doi.org/10.1214/15-AAP1114}.

\bibitem[Jak05]{Jakub}
\textsc{A.~Jakubowski}.
\newblock An almost sure approximation for the predictable process in the
  {D}oob-{M}eyer decomposition theorem.
\newblock In \emph{S\'{e}minaire de {P}robabilit\'{e}s {XXXVIII}}, vol. 1857 of
  \emph{Lecture Notes in Math.},  158--164. Springer, Berlin, 2005.
\newblock
  \burlalt{doi:10.1007/978-3-540-31449-3_11}{http://dx.doi.org/10.1007/978-3-540-31449-3_11}.

\bibitem[JS03]{Shirya}
\textsc{J.~Jacod} and \textsc{A.~N. Shiryaev}.
\newblock \emph{Limit theorems for stochastic processes}, vol. 288 of
  \emph{Grundlehren der Mathematischen Wissenschaften [Fundamental Principles
  of Mathematical Sciences]}.
\newblock Springer-Verlag, Berlin, second ed., 2003.
\newblock
  \burlalt{doi:10.1007/978-3-662-05265-5}{http://dx.doi.org/10.1007/978-3-662-05265-5}.

\bibitem[KHMN14]{KHiga_quad}
\textsc{A.~Kohatsu-Higa}, \textsc{A.~Makhlouf}, and \textsc{H.~Ngo}.
\newblock Approximations of non-smooth integral type functionals of one
  dimensional diffusion processes.
\newblock \emph{Stochastic Processes and their Applications} \textbf{124},
  no.~5, (2014), 1881 -- 1909.
\newblock
  \burlalt{doi:https://doi.org/10.1016/j.spa.2014.01.003}{http://dx.doi.org/https://doi.org/10.1016/j.spa.2014.01.003}.

\bibitem[Kun97]{Kunita}
\textsc{H.~Kunita}.
\newblock \emph{Stochastic flows and stochastic differential equations},
  vol.~24 of \emph{Cambridge Studies in Advanced Mathematics}.
\newblock Cambridge University Press, Cambridge, 1997.
\newblock Reprint of the 1990 original.

\bibitem[L{\^e}20]{Khoa}
\textsc{K.~L{\^e}}.
\newblock A stochastic sewing lemma and applications.
\newblock \emph{Electron. J. Probab.} \textbf{25}, (2020), 55 pp.
\newblock \burlalt{doi:10.1214/20-EJP442}{http://dx.doi.org/10.1214/20-EJP442}.

\bibitem[LS18]{Szo3}
\textsc{G.~Leobacher} and \textsc{M.~Sz{\"o}lgyenyi}.
\newblock {Convergence of the Euler-Maruyama method for multidimensional SDEs
  with discontinuous drift and degenerate diffusion coefficient}.
\newblock \emph{Numerische Mathematik} \textbf{138}, no.~1, (2018), 219--239.
\newblock
  \burlalt{doi:10.1007/s00211-017-0903-9}{http://dx.doi.org/10.1007/s00211-017-0903-9}.

\bibitem[MX18]{MX}
\textsc{R.~Mikulevi{\v c}ius} and \textsc{F.~Xu}.
\newblock {On the rate of convergence of strong Euler approximation for SDEs
  driven by Levy processes}.
\newblock \emph{Stochastics} \textbf{90}, no.~4, (2018), 569--604.
\newblock
  \burlalt{doi:10.1080/17442508.2017.1381095}{http://dx.doi.org/10.1080/17442508.2017.1381095}.

\bibitem[MY18]{Y2}
\textsc{T.~{M{\"u}ller-Gronbach}} and \textsc{L.~{Yaroslavtseva}}.
\newblock {On the performance of the Euler-Maruyama scheme for SDEs with
  discontinuous drift coefficient}.
\newblock \emph{ArXiv e-prints} (2018).
\newblock \burlalt{arXiv:1809.08423}{http://arxiv.org/abs/1809.08423}.

\bibitem[Neu06]{NEU06}
\textsc{A.~Neuenkirch}.
\newblock Optimal approximation of sde's with additive fractional noise.
\newblock \emph{Journal of Complexity} \textbf{22}, no.~4, (2006), 459 -- 474.
\newblock
  \burlalt{doi:https://doi.org/10.1016/j.jco.2006.02.001}{http://dx.doi.org/https://doi.org/10.1016/j.jco.2006.02.001}.

\bibitem[NO02]{Nualart}
\textsc{D.~Nualart} and \textsc{Y.~Ouknine}.
\newblock Regularization of differential equations by fractional noise.
\newblock \emph{Stochastic Processes and their Applications} \textbf{102},
  no.~1, (2002), 103 -- 116.
\newblock
  \burlalt{doi:https://doi.org/10.1016/S0304-4149(02)00155-2}{http://dx.doi.org/https://doi.org/10.1016/S0304-4149(02)00155-2}.

\bibitem[NO03]{NualartUnbounded}
\textsc{D.~Nualart} and \textsc{Y.~Ouknine}.
\newblock Stochastic differential equations with additive fractional noise and
  locally unbounded drift.
\newblock In \emph{Stochastic inequalities and applications}, vol.~56 of
  \emph{Progr. Probab.},  353--365. Birkh\"{a}user, Basel, 2003.

\bibitem[NS19]{Neu-Sz}
\textsc{A.~{Neuenkirch}} and \textsc{M.~Sz{\"o}lgyenyi}.
\newblock {The Euler-Maruyama Scheme for SDEs with Irregular Drift: Convergence
  Rates via Reduction to a Quadrature Problem}.
\newblock \emph{arXiv e-prints} (2019).
\newblock \burlalt{arXiv:1904.07784}{http://arxiv.org/abs/1904.07784}.

\bibitem[Nua06]{Nualart-Malliavin}
\textsc{D.~Nualart}, ed.
\newblock \emph{The Malliavin Calculus and Related Topics}.
\newblock Springer-Verlag, 2006.
\newblock
  \burlalt{doi:10.1007/3-540-28329-3}{http://dx.doi.org/10.1007/3-540-28329-3}.

\bibitem[PT17]{PT}
\textsc{O.~M. Pamen} and \textsc{D.~Taguchi}.
\newblock {Strong rate of convergence for the Euler–Maruyama approximation of
  SDEs with H{\"o}lder continuous drift coefficient}.
\newblock \emph{Stochastic Processes and their Applications} \textbf{127},
  no.~8, (2017), 2542 -- 2559.
\newblock
  \burlalt{doi:https://doi.org/10.1016/j.spa.2016.11.008}{http://dx.doi.org/https://doi.org/10.1016/j.spa.2016.11.008}.

\bibitem[She15]{Sheva}
\textsc{G.~Shevchenko}.
\newblock Fractional {B}rownian motion in a nutshell.
\newblock In \emph{Analysis of fractional stochastic processes}, vol.~36 of
  \emph{Int. J. Modern Phys. Conf. Ser.},  1560002, 16. World Sci. Publ.,
  Hackensack, NJ, 2015.

\bibitem[Ver80]{Veret80}
\textsc{A.~J. Veretennikov}.
\newblock Strong solutions and explicit formulas for solutions of stochastic
  integral equations.
\newblock \emph{Mat. Sb. (N.S.)} \textbf{111(153)}, no.~3, (1980), 434--452,
  480.

\bibitem[Zvo74]{Zvonya}
\textsc{A.~K. Zvonkin}.
\newblock A transformation of the phase space of a diffusion process that will
  remove the drift.
\newblock \emph{Mat. Sb. (N.S.)} \textbf{93(135)}, (1974), 129--149, 152.

\end{thebibliography}

\end{document}